\newtheorem{lemma}{Lemma}[section]
\newtheorem{theorem}[lemma]{Theorem}
\newtheorem{corollary}[lemma]{Corollary}
\newtheorem{proposition}[lemma]{Proposition}
\newtheorem*{theostar}{\sc Theorem}
\theoremstyle{definition}
\newtheorem{definition}[lemma]{Definition}
\newtheorem{remark}[lemma]{\sc Remark}
\newtheorem{example}[lemma]{\sc Example}
\newtheorem{assumption}{\sc Assumptions}
\newcommand{\K}{\mathrm{K\ddot{a}h}}
\newcommand{\nocontentsline}[3]{}
\newcommand{\tocless}[2]{\bgroup\let\addcontentsline=\nocontentsline#1{#2}\egroup}
\author{Coline Emprin}
\author{Geoffroy Horel}
\title{Weight structures and formality}
\date{\today}
\address{Coline Emprin, D\'epartement de math\'ematiques et applications, \'Ecole normale sup\'erieure,
	\indent  45 rue d’Ulm, 75230 Paris, France. }
\email{\noindent \href{mailto:coline.emprin@ens.psl.eu}{coline.emprin@ens.psl.eu} \medskip}
\address{Geoffroy Horel, Universit\'e Sorbonne Paris Nord, Laboratoire de Géométrie, Analyse et \indent Applications, CNRS, UMR 7539, F-93430, Villetaneuse, France}
\email{\href{mailto:horel@math.univ-paris13.fr}{horel@math.univ-paris13.fr}}
\thanks{2020 \emph{Mathematics Subject Classification.} 18M70, 18N40, 18N50, 18N60, 22E60, 55P62.\\
\indent Both authors were supported ANR-20-CE40-0016 HighAGT}
\keywords{}
\begin{document}

\definecolor{red}{RGB}{230,97,0}
\definecolor{blue}{RGB}{93,58,155}


\newcommand{\chdiscr}{\mathsf{discr.\ Ch}}
\newcommand{\sSe}{\mathsf{sSet}}
\newcommand{\sLialg}{\ensuremath{\mathcal{sL}_\infty\text{-}\,\mathsf{alg}}}
\newcommand{\isLialg}{\ensuremath{\infty\text{-}\,\mathcal{sL}_\infty\text{-}\,\mathsf{alg}}}
\newcommand{\sLiealg}{\ensuremath{\mathcal{s}\lie\,\text{-}\,\mathsf{alg}}}
\def\cD{\Delta}

\newcommand{\ucomnalg}{\ensuremath{\mathrm{uCom}_{\leslant 0}\text{-}\,\mathsf{alg}}}
\newcommand{\fnQsp}{\ensuremath{\mathsf{ft}\mathbb{Q}\text{-}\mathsf{ho}(\mathsf{sSet})}}
\newcommand{\fnsp}{\ensuremath{\mathsf{fn}\text{-}\mathsf{Sp}}}
\newcommand{\nQsp}{\ensuremath{\mathsf{n}\mathbb{Q}\text{-}\mathsf{Sp}}}
\newcommand{\nsp}{\ensuremath{\mathsf{n}\text{-}\mathsf{Sp}}}
\newcommand{\fQalg}{\ensuremath{\mathsf{ft}\text{-}\mathsf{ho}(\mathrm{uCom}_{\leqslant 0}\text{-}\,\mathsf{alg})}}

\newcommand{\coker}{\operatorname{coker}}
\newcommand{\Ran}{\operatorname{Ran}}
\newcommand{\ho}{\operatorname{ho}}
\newcommand{\Mal}{\operatorname{Mal}}
\newcommand{\Loc}{\operatorname{Loc}}

\definecolor{red}{RGB}{230,97,0}
\definecolor{blue}{RGB}{93,58,155}
\definecolor{Chocolat}{rgb}{0.36, 0.2, 0.09}
\definecolor{BleuTresFonce}{rgb}{0.215, 0.215, 0.36}
\definecolor{BleuMinuit}{RGB}{0, 51, 102}
\definecolor{bordeau}{rgb}{0.5,0,0}
\definecolor{turquoise}{RGB}{6, 62, 62}

\newcommand{\coline}[1]{\textcolor{bordeau}{#1}}


\newcommand{\sLi}{\ensuremath{\mathcal{sL}_\infty}}
\newcommand{\ccoLi}{\ensuremath{\mathrm{cBCom}}}
\newcommand{\sLie}{\ensuremath{\mathcal{s}\mathrm{Lie}}}
\newcommand{\com}{\ensuremath{\mathrm{Com}}}
\newcommand{\ucom}{\ensuremath{\mathrm{uCom}}}
\newcommand{\Cobar}{\ensuremath{\Omega}}
\newcommand{\hatCobar}{\ensuremath{\widehat{\Omega}}}
\renewcommand{\Bar}{\ensuremath{\mathrm{B}}}
\newcommand{\hatBar}{\ensuremath{\widehat{\mathrm{B}}}}
\newcommand{\lie}{\ensuremath{\mathrm{Lie}}}
\newcommand{\uhocom}{\ensuremath{\mathrm{u}\Omega\mathrm{BCom}}}

\newcommand{\Sh}{\ensuremath{\mathrm{Sh}}}


\newcommand{\g}{\ensuremath{\mathfrak{g}}}
\newcommand{\wsLi}{\ensuremath{\widehat{\mathcal{sL}_\infty}}}


\newcommand{\R}{\ensuremath{\mathrm{R}}}
\newcommand{\Rh}{\ensuremath{\mathrm{R^h}}}
\renewcommand{\L}{\ensuremath{\mathscr{L}}}
\newcommand{\Li}{\mathfrak{L}}
\newcommand{\APL}{\mathrm{A_{PL}}}
\newcommand{\CPL}{\mathrm{C_{PL}}}


\def\QQ{\mathbb{Q}}
\def\Co{\mathbb{C}}
\newcommand{\antishriek}{\text{\raisebox{\depth}{\textexclamdown}}}
\newcommand{\RT}{\mathrm{RT}}
\newcommand{\LRT}{\mathrm{LRT}}
\newcommand{\Sy}{\mathbb{S}}
\renewcommand{\d}{\ensuremath{\mathrm{d}}}
\newcommand{\PP}{\ensuremath{\mathrm{P}}}
\def\Ho#1#2{\Lambda^{#2}_{#1}}
\def\De#1{\Delta^{#1}}
\newcommand{\NN}{\mathbb{N}}
\newcommand{\RR}{\mathbb{R}}
\def\BCH{\mathrm{BCH}}
\newcommand{\wPT}{\ensuremath{\mathrm{wPT}}}
\newcommand{\oPT}{\ensuremath{\overline{\mathrm{PT}}}}
\newcommand{\PaRT}{\ensuremath{\mathrm{PaRT}}}
\newcommand{\PaPT}{\ensuremath{\mathrm{PaPT}}}
\newcommand{\PaPRT}{\ensuremath{\mathrm{PaPRT}}}
\def\rmC{\mathrm{C}}
\newcommand{\berglund}{\ensuremath{\mathcal{B}}}
\def\hot{\widehat{\otimes}} 
\def\whk{\widehat{k}}

\def\colim{\mathop{\mathrm{colim}}}

\newcommand{\CC}{\ensuremath{\mathrm{CC}_\infty}}


\newcommand{\Lalg}{\ensuremath{\mathscr{L}_\infty\text{-}\mathsf{alg}}}

\newcommand{\F}{\ensuremath{\mathrm{F}}}
\newcommand{\h}{\ensuremath{\mathfrak{h}}}

\newcommand{\C}{\ensuremath{\mathscr{C}}}
\newcommand{\D}{\ensuremath{\mathscr{D}}}
\renewcommand{\P}{\ensuremath{\mathscr{P}}}

\newcommand{\VdL}{\ensuremath{\mathrm{VdL}}}
\newcommand{\N}{\ensuremath{\mathrm{N}}}
\newcommand{\ch}{\ensuremath{\mathrm{Ch}}}
\newcommand{\End}{\ensuremath{\mathrm{End}}}
\newcommand{\Aut}{\ensuremath{\mathrm{Aut}}}
\newcommand{\eend}{\ensuremath{\mathrm{end}}}
\newcommand{\susp}{\ensuremath{\mathscr{S}}}
\newcommand{\T}{\ensuremath{\mathcal{T}}}
\newcommand{\vdl}{\ensuremath{\mathrm{VdL}}}
\newcommand{\Tw}{\ensuremath{\mathrm{Tw}}}
\newcommand{\Hom}{\ensuremath{\mathrm{Hom}}}
\renewcommand{\S}{\ensuremath{\mathbb{S}}}
\renewcommand{\k}{\ensuremath{\mathbb{K}}}
\newcommand{\id}{\ensuremath{\mathrm{id}}}
\newcommand{\MC}{\ensuremath{\mathrm{MC}}}
\newcommand{\mc}{\ensuremath{\mathfrak{mc}}}
\newcommand{\mclie}{\ensuremath{\overline{\mathfrak{mc}}}}
\newcommand{\dgl}{\ensuremath{\mathsf{dgLie}}}

\newcommand{\B}{\mathcal{B}}
\newcommand{\Z}{\mathcal{Z}}

\newcommand{\ad}{\operatorname{ad}}
\newcommand{\PTt}{\ensuremath{\widetilde{\mathrm{PT}}}}
\newcommand{\PT}{\ensuremath{\mathrm{PT}}}

\newcommand{\A}{\ensuremath{\mathrm{A}}}

\makeatletter

\begin{abstract}
 This is a survey on formality results relying on weight structures. A weight structure is a naturally occurring grading on certain differential graded algebras. If this weight satisfies a purity property, one can deduce formality. Algebraic geometry provides us with such weight structures as the cohomology of algebraic varieties tends to present additional structures including a Hodge structure or a Galois action. 
\end{abstract}

\maketitle

\setcounter{tocdepth}{1}
\tableofcontents

\section*{\textcolor{bordeau}{Introduction}}

\noindent \textbf{The notion of formality.} Let $A$ be a chain complex equipped with an algebraic structure (e.g.\ an associative algebra, a commutative algebra, an operad, etc.). The homology of this complex inherits the same type of structure. In general, this induced structure does not retain all the homotopical information contained in $A$. For example, it is well-known that the homology of a differential graded algebra can have additional non-trivial \emph{Massey products} that witness homotopical information about the algebra. In some cases all these Massey products vanish, namely if $A$ is homotopy equivalent to an algebra whose differential is identically zero. If so, the algebra is said to be \emph{formal}. The idea of formality originated in rational homotopy theory. In this context, a topological space $X$ is formal if its Sullivan’s algebra of polynomial forms $\Omega^*_{PL}(X)$ is connected to its cohomology $H^*(X; \mathbb{Q})$ by a string of quasi-isomorphisms of commutative differential graded algebras. \bigskip

\noindent \textbf{The case of compact Kähler manifolds.} A central formality result was proved by Deligne, Griffiths, Morgan, and Sullivan in \cite{DGMS75}. Using Hodge theory, they showed that any compact Kähler manifold is formal, see Section \ref{2}. However, as explained in their introduction, their intuition came from the Weil's conjectures and the following observation~:  formality of an algebra can be viewed as a multiplicative splitting of the canonical filtration. Indeed, if that is the case, the algebra is quasi-isomorphic to the associated graded of the canonical filtration, which is exactly the cohomology algebra. The abstract statement that emerges from this intuition is the following one.

\begin{theostar}
If a dg-algebra $A^*$ admits a multiplicative ``weight decomposition'' 
\[A^*=\oplus_{i\in\mathbb{Z}} A^*_i\]
with the property that $H^i(A)$ is concentrated in weight $i$, then $A$ is formal. 
\end{theostar}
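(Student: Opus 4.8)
The plan is to exhibit $A^\ast$ as the apex of a zig-zag of quasi-isomorphisms of dg-algebras
\[
A^\ast \;\xleftarrow{\ \sim\ }\; Z^\ast \;\xrightarrow{\ \sim\ }\; H^\ast(A),
\]
the right-hand term carrying the zero differential, where $Z^\ast\subseteq A^\ast$ is a sub-dg-algebra built from the weight grading. A ``multiplicative weight decomposition'' is in particular a decomposition of $A^\ast$ as a dg-algebra, so the differential preserves the weight; writing $A_i=(A^\ast_i,d)$ for the weight-$i$ subcomplex, we have $A=\bigoplus_i A_i$ as complexes, $A_i\cdot A_j\subseteq A_{i+j}$, and the purity hypothesis says precisely that $H^n(A_i)=0$ whenever $i\neq n$. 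In particular each $A_i$ has cohomology concentrated in cohomological degree $i$, and $H^\ast(A)=\bigoplus_i H^i(A_i)$ with $H^i(A_i)$ placed in degree (and weight) $i$.

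\emph{The intermediate algebra.} Truncate each weight-$i$ subcomplex in cohomological degree $i$: set
\[
Z^n \;:=\; \Bigl(\,\bigoplus_{i>n} A^n_i\,\Bigr)\ \oplus\ \bigl(\ker d\cap A^n_n\bigr),
\]
i.e.\ $Z=\bigoplus_i \tau_{\le i}A_i$, where $\tau_{\le i}$ denotes good truncation of a complex in the cohomological grading. I would check that $Z$ is a sub-dg-algebra. It is a subcomplex: on a summand $A^n_i$ with $i>n$ the differential lands in $A^{n+1}_i$, which is a summand of $Z^{n+1}$ when $i>n+1$ and lies inside $\ker d\cap A^{n+1}_{n+1}$ when $i=n+1$; on $\ker d\cap A^n_n$ it vanishes. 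It is closed under the product: if $x\in A^n_i$ and $y\in A^m_j$ are nonzero components of elements of $Z$, then $i\ge n$ and $j\ge m$, so $i+j\ge n+m$; when the inequality is strict, $xy\in A^{n+m}_{i+j}$ is a genuine summand of $Z^{n+m}$, and when $i+j=n+m$ one is forced into $i=n$, $j=m$, so that $x$ and $y$ are cocycles and hence so is $xy\in\ker d\cap A^{n+m}_{n+m}$. (If $A$ is unital, $1\in\ker d\cap A^0_0\subseteq Z^0$.)

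\emph{The comparison maps.} For each $i$ the good-truncation inclusion $\tau_{\le i}A_i\hookrightarrow A_i$ is an isomorphism on $H^{\le i}$ and the target has no cohomology in degrees $>i$ by purity; summing over $i$ shows $Z\hookrightarrow A$ is a quasi-isomorphism, with $H^n(Z)=\bigoplus_{i\ge n}H^n(A_i)=H^n(A_n)=H^n(A)$. For the other map, let $p\colon Z\to H^\ast(A)$ be, in each weight $i$, the canonical surjection $\tau_{\le i}A_i\twoheadrightarrow H^i(A_i)[-i]$; concretely $p$ annihilates every summand $A^n_i$ with $i>n$ and sends a cocycle in $A^n_n$ to its class in $H^n(A_n)\subseteq H^n(A)$. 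Each such surjection is a chain map (the sole point being that the composite of $d$ out of degree $i-1$ with the projection onto $H^i(A_i)$ vanishes) and a quasi-isomorphism (for $n<i$ the source $H^n(A_i)$ already vanishes by purity), so $p$ is a quasi-isomorphism; and $p$ is multiplicative because the weight-$(n+m)$ component of a product of elements of $Z^n$ and $Z^m$ is exactly the product of their degree-equals-weight components, while cohomology classes multiply as products of cocycles. The resulting zig-zag exhibits $A$ as formal.

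\emph{Expected difficulty.} The argument is elementary once the right intermediate object is in hand; producing it is the step that requires an idea, since the naive candidates — the cocycle subalgebra, or $\bigoplus_{i\le n}A^n_i$ in degree $n$ — carry too much cohomology. What makes $Z=\bigoplus_i\tau_{\le i}A_i$ a subalgebra is that on the surviving bidegrees the weight dominates the cohomological degree, and the bound $i\ge n$ is stable under multiplication; good truncation is not lax monoidal in general, and it is precisely purity that repairs this. It is also worth recording that the proof uses neither associativity, commutativity, nor (co)unitality beyond the Leibniz rule, so the same statement holds verbatim for any algebraic structure whose operations are weight-homogeneous — operads, cooperads, $E_\infty$-algebras, and so on — which is what makes this principle so broadly applicable.
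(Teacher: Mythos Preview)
Your argument is correct and is precisely the standard truncation construction; the paper does not spell out a proof here (it calls it ``basic'' and defers the functorial version, Proposition~\ref{forget}, to \cite[Proposition~2.7]{CH20}), but that reference uses exactly your intermediate object $Z=\bigoplus_i\tau_{\le i}A_i$ and the same zig-zag. Your closing remark that the argument uses only the Leibniz rule and hence applies to arbitrary weight-homogeneous algebraic structures is also the point the paper exploits when it upgrades this to a statement about lax symmetric monoidal functors.
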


A functorial version of this theorem appears as Proposition \ref{factorisation}, whose proof is basic. The strength of this result comes from combining it with deep results from algebraic geometry producing the desired weight decompositions. \bigskip

\textbf{Mixed Hodge structures.}  The first type of such decompositions studied in this survey comes from mixed Hodge theory. By work of Deligne, the cohomology of complex algebraic varieties carries a canonical mixed Hodge structure. It turns out that this structure can be lifted at the chains level. Moreover, a result of Deligne shows that mixed Hodge structure are functorialy split. From these two facts, we can obtain a functorial weight decomposition as in the Theorem above for many varieties. We refer the reader to Section \ref{Hodge} for a survey of this result based on joint work of the second author and Joana Cirici, see \cite{CH20}. \bigskip

\textbf{Galois group actions.} One can try to use similar techniques in order to prove formality results over $\mathbb{F}_p$ instead of $\mathbb{Q}$. In that case, mixed Hodge theory does not make sense anymore. However, one can use Frobenius actions on \'etale cohomology. In the context a smooth projective variety over a finite field, the canonical filtration can be split by eigenvalues of a Frobenius action. This idea is explained in Section \ref{torsion} based on a joint paper of the second author with Joana Cirici, see \cite{CH22}. In this context, one does not obtain full $\mathbb{Z}$-graded weight decompositions as in the Theorem above but merely $\mathbb{Z}/(h)$-graded decompositions for some integer $h$. \bigskip

\textbf{Gauge formality.} By using the operadic calculus, one can have another approach to formality which boils down to a deformation problem. This leads to the notion of \emph{gauge formality} which is presented in Section \ref{7}. We present a joint work of Gabriel Drummond-Cole and the second author (see \cite{DCH21}) which revisit the results of the previous sections, using the approach of gauge formality. This idea of gauge formality gives rise to an obstruction theory to formality due initially to Kaledin \cite{Kal07} in the context of associative algebras and pushed further by Melani-Rubio and the first author, see \cite{MR19,Kaledin}. One significant consequence of this theory are formality descent results, see Theorem \ref{descent2}. \bigskip

\noindent \textbf{Notations and conventions}. 

\begin{itemize}
	\item[$\centerdot$] Let $R$ be a commutative ground ring.
	\item[$\centerdot$] We generically write $\otimes$ for the tensor product over a commutative ground ring that should always be clear from context.
	\item[$\centerdot$] If $A$ is a chain complex and $x \in A$ is a homogeneous element, we denote by $|x|$ its homological degree.
	\item[$\centerdot$] The abbreviation ``dg'' stands for the words ``differential graded''.
	\item[$\centerdot$]  We use the notations of \cite{LodayVallette12} for operads.  
\end{itemize}

\noindent \textbf{Acknowledgments}. This survey emerged from a series of three lectures given by the second named author at the workshop Higher Structures and Operadic Calculus at CRM Barcelona in June 2021. We would like to thank all participants and organizers. We are deeply grateful to Joana Cirici and Clément Dupont for a thorough review of these notes.  \bigskip

\section{\textcolor{bordeau}{The notion of formality}}\label{2.}

In this section, we define the formality of various algebraic structures and discuss its origins in rational homology theory.

\subsection{Formality of algebraic structures}

Let $\P$ be an operad in $R$-modules. Let $(A, \phi)$ be a dg $\P$-algebra, i.e. a chain complex $A$ over $R$ endowed with an operad morphism \[\varphi : \P \longrightarrow \End_A \] between $\P$ and the endomorphism operad associated to $A$. The induced map $\varphi_* : \P \to \End_{H(A)}$ turns the homology into a $\P$-algebra. We will refer to it as the \emph{induced structure} in homology. 

\begin{definition}[Formality of dg $\P$-algebras]\label{formal}
The dg $\P$-algebra $(A, \varphi)$ is \emph{formal} if there exists a zig-zag of dg $\P$-algebra quasi-isomorphisms
\[ (A, \varphi) \; \overset{\sim}{\longleftarrow} \;  \cdot \;  \overset{\sim}{\longrightarrow} \;  \cdots \;  \overset{\sim}{\longleftarrow} \; \cdot \;  \overset{\sim}{\longrightarrow} \; (H(A) , \varphi_*) \] relating it to its induced structure in homology.
\end{definition}

\begin{remark}
The number of quasi-isomorphisms involved in a formality zig-zag is arbitrary. However, it can be reduced to a length two zig-zag in many cases, e.g. if $R$ is a field, under additional assumptions on $\P$ in the positive characteristic case. Under these assumptions, the category of dg $\P$-algebras is equipped with a transferred model category structure, see \cite[Theorem 4.1.1]{Hinich97}. Then, any zig-zag of quasi-isomorphisms induces an isomorphism in the associated homotopy category and can be represented by an actual weak equivalence between a cofibrant replacement of the source and a fibrant one of the target. Any object being fibrant in this context, this leads to a zig-zag
\[(A, \varphi) \; \overset{\sim}{\longleftarrow} \; Q(A, \varphi) \; \overset{\sim}{\longrightarrow} \; (H(A) , \varphi_*) \ ,\] where $Q(A, \varphi)$ denotes a cofibrant replacement of $(A, \varphi)$.
\end{remark}

\begin{definition}[Lax symmetric monoidal functor]\label{lax monoidal functor}
A \emph{lax monoidal functor} \[(F, \kappa, \eta) : (\mathcal{C}, \otimes, \mathbf{1}) \longrightarrow (\mathcal{D}, \otimes, \mathbf{1}')\] is a functor $F : \mathcal{C} \to \mathcal{D}$ between monoidal categories together with maps
\[\kappa_{X,Y} : F(X) \otimes F(Y) \longrightarrow F(X \otimes Y)\]
that are natural in the objects $X$ and $Y$ of $\mathcal{C}$, and a morphism of $\mathcal{D}$, \[\eta : \mathbf{1}' \longrightarrow F(\mathbf{1}) \] that are compatible with the constraints of associativity and unit. The functor $F$ is said to be \emph{lax symmetric monoidal}, if $\kappa$ is compatible with the commutativity constraint. A lax monoidal functor is called \emph{strong} if $\kappa$ and $\eta$ are isomorphisms.  We refer the reader to \cite[Chapter 2]{etingoftensor} for more details.
\end{definition}

\begin{remark}
In the sequel, strong monoidal functors will not play an important role and we shall often use ``symmetric monoidal functor'' to refer to a lax symmetric monoidal functor.
\end{remark}

\begin{example}
Let $(\mathcal{A}, \otimes, \mathbf{1})$ be an abelian symmetric monoidal category with infinite direct sums. The homology functor $H : \ch_*(\mathcal{A}) \to \ch_*(\mathcal{A})$ is lax symmetric monoidal, via the usual Künneth morphism. If $\mathcal{A}$ is the category of vector spaces over a field, then this functor is strong symmetric monoidal.
\end{example}

\begin{definition}[Formality of symmetric monoidal functors]\label{formal2}
	Let $(\mathcal{C}, \otimes, \mathbf{1})$ a symmetric monoidal category. A symmetric monoidal functor $F : \mathcal{C} \to \ch_*(R)$ is \emph{formal} if it is weakly equivalent to $H \circ F$, i.e. if there exists a zig-zag of natural transformations of symmetric monoidal functors  $$ F \; \overset{\Phi_1}{\longleftarrow} \; F_1 \; \longrightarrow \; \cdots \; \longleftarrow \;  F_n \;  \overset{\Phi_n}{\longrightarrow} \; H \circ F$$ such that $\Phi_i(X)$ is a quasi-isomorphisms for every object $X$ of $\mathcal{C}$. 
\end{definition}

If we allow operads to be \emph{colored}, the formality of symmetric monoidal functors appears as a particular case of Definition \ref{formal} (see Proposition \ref{equivalence} below). A colored operad is an operad in which each input or output comes with a color chosen in a given set. A composition is possible whenever the colors of the corresponding input and output involved match. We now give a precise definition. Let $\langle n\rangle$ denote the finite set $\{0,1,\ldots,n\}$.

\begin{definition}[Set colored operads]
 Let $I$ be a set of colors. Fix $(\mathcal{C},\otimes,\mathbf{1})$ a symmetric monoidal category. An \emph{$I$-colored operad} in $\mathcal{C}$ is a set \[\lbrace\P(n,i)\rbrace_{i : \langle n\rangle \to I} \] of objects of $\mathcal{C}$ indexed by all maps $i : \langle n\rangle \to i$, for $n \geqslant  0$, together with 
\begin{itemize}
	
	\item[$\centerdot$] composition maps for all $l \leqslant  n$, $$\circ_{l} : \P(n,i) \otimes \P(m,j) \to \P(m+n -1, i \circ_{l} j) $$ where  $i(l) = j(0)$ and  $i \circ_{l} j : \langle m +n -1\rangle  \to I$ is defined by
	
	\[i \circ_{l} j(k)  =\left\{ \begin{array}{lll}
		i(k)  &  \mbox{if }   k < l \\
		j(k-l+1)
		  &  \mbox{if } l  \leqslant  k < l + m \\
		i(k - m)  &  \mbox{if }  l+m \leqslant  k \ ;\ \\
	\end{array} \right.
	\] \item[$\centerdot$] a right $\mathbb{S}_n$-action on \[\bigoplus_{i: \langle n\rangle\to I} \P(n,i) \ ; \]	
	\item[$\centerdot$] an identity $\mathrm{id}_{\alpha} \in \P(1, c_{\alpha})$, for each $\alpha \in I$, where $c_{\alpha} : \langle 1\rangle  \to I$ is the constant map with value $\alpha$. These identities act as units with respect to any well defined composition.
\end{itemize}
These data satisfy the compatibility relations for $\circ_{l}$-operations of an operad (associativity, equivariance, etc.) whenever these make sense. 
\end{definition}

\begin{example}[The endomorphism colored operad]
	 Let $A = \lbrace A_{\alpha}  \rbrace_{\alpha \in I}$ be a family of chain complexes. The associated endomorphism $I$-colored operad $\End_A$ is defined for all $n \geqslant  0$, and all $i : \langle n\rangle \to I$, by $$\End_A(n,i)= \Hom\left(A_{i(1)} \otimes \cdots \otimes A_{i(n)}, A_{i(0)} \right) \ , $$ where the composition products (resp. the $\mathbb{S}_n$-actions) are induced by substitution (resp. permutation) of the tensor factors.  
\end{example}

\begin{definition}[Algebras over a set colored operad]
Let $\P$ be a $I$-colored operad. A dg $\P$-algebra is a family $A = \lbrace A_{\alpha}  \rbrace_{\alpha \in I}$ of chain complexes endowed with a morphism of $I$-colored operads $$ \P \longrightarrow \End_A \ .$$  
\end{definition}

\begin{remark}
In order to encode symmetries, one can also consider groupoid colored operads where colors are chosen in a given groupoid $\mathbb{V}$ instead of a set $I$. The associated Koszul duality theory was developed by Ward in \cite{War19}. We refer the reader to \cite[Section 5]{RL2} for more details. 
\end{remark}

\begin{example}
There exists an $\mathbb{N}$-colored operad $\mathcal{O}$ such that $\mathcal{O}$-algebras are exactly non-symmetric operads, see \cite[Section~4]{vdL03}. Similarly, there exists an $\mathbb{N}$-colored operad encoding symmetric operads (see \cite[Definition 5.1.5]{chuhaugseng})
\end{example}

\begin{proposition}\label{equivalence}
Let $(\mathcal{C}, \otimes, \mathbf{1})$ be a symmetric monoidal category. There is an associated $Ob(\mathcal{C})-$colored operad defined for all $i : \langle n\rangle \to Ob(\mathcal{C})$ by
\[\mathcal{Q}(n,i) \coloneqq \Hom_{\mathcal{C}}(i(1)\otimes \cdots \otimes i(n), i(0) ) \ .\]
A dg $\mathcal{Q}$-algebra over this operad is the same data as a lax symmetric monoidal functor 
\[F : \mathcal{C} \to \ch_*(R) \ .\]
\end{proposition}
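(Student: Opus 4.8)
The plan is to prove this by unpacking both sides of the asserted equivalence into the same concrete data and then observing that the matching is a bijection; the statement is purely formal, so the only point requiring care is the handling of the coherence isomorphisms, which I would suppress notationally and reinstate in the unique sensible way via Mac Lane's coherence theorem. First note that each $\mathcal{Q}(n,i)=\Hom_{\mathcal{C}}(i(1)\otimes\cdots\otimes i(n),i(0))$ is a set, so $\mathcal{Q}$ is a colored operad valued in discrete chain complexes; consequently a morphism of $Ob(\mathcal{C})$-colored operads $\varphi\colon\mathcal{Q}\to\End_A$ amounts to the assignment, to every $f\in\Hom_{\mathcal{C}}(i(1)\otimes\cdots\otimes i(n),i(0))$, of a \emph{chain map} $\varphi(f)\colon A_{i(1)}\otimes\cdots\otimes A_{i(n)}\to A_{i(0)}$, natural in the colors, $\mathbb{S}_n$-equivariant, compatible with the composition maps $\circ_l$, and sending each $\mathrm{id}_\alpha$ to the identity of $A_\alpha$. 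Thus a dg $\mathcal{Q}$-algebra is precisely a family $\{A_X\}_{X\in Ob(\mathcal{C})}$ equipped with such a system of chain maps.

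From a dg $\mathcal{Q}$-algebra $(A,\varphi)$ I would produce a lax symmetric monoidal functor $(F,\kappa,\eta)$ as follows: set $F(X):=A_X$ on objects; for a morphism $f\colon X\to Y$ set $F(f):=\varphi(f)$ using the arity-one component of $\varphi$, so that the unit axiom gives $F(\mathrm{id}_X)=\mathrm{id}$ and compatibility of $\varphi$ with $\circ_1$ in arities $(1,1)$ gives $F(g\circ f)=F(g)\circ F(f)$; let $\kappa_{X,Y}:=\varphi(\mathrm{id}_{X\otimes Y})$, the image of the identity morphism viewed as an element of $\mathcal{Q}(2,i)$ with $i(1)=X$, $i(2)=Y$, $i(0)=X\otimes Y$; and let $\eta:=\varphi(\mathrm{id}_{\mathbf{1}})$, the image of $\mathrm{id}_{\mathbf{1}}\in\mathcal{Q}(0,i)$ with $i(0)=\mathbf{1}$. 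Then naturality of $\kappa$ in each variable and the associativity and unit coherences for $(\kappa,\eta)$ are instances of the compatibility of $\varphi$ with the $\circ_l$ maps (applied respectively to a general morphism composed with an identity in each of the two orders, and to composites of identities), while compatibility of $\kappa$ with the symmetry is exactly the $\mathbb{S}_2$-equivariance of $\varphi$, the $\mathbb{S}_2$-action on $\mathcal{Q}(2,-)$ being the one induced by the braiding of $\mathcal{C}$. Conversely, from $(F,\kappa,\eta)$ I would set $A_X:=F(X)$ and, for $f\in\Hom_{\mathcal{C}}(i(1)\otimes\cdots\otimes i(n),i(0))$, define $\varphi(f):=F(f)\circ\kappa^{(n)}_{i(1),\dots,i(n)}$, where $\kappa^{(n)}\colon F(i(1))\otimes\cdots\otimes F(i(n))\to F(i(1)\otimes\cdots\otimes i(n))$ is the iterated lax structure map ($\kappa^{(0)}=\eta$, $\kappa^{(1)}=\mathrm{id}$, $\kappa^{(2)}=\kappa$, every bracketing yielding the same map by coherence). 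Preservation of identities is then immediate, $\mathbb{S}_n$-equivariance follows from naturality of $\kappa$ together with the iterated hexagon axiom, and compatibility with $\circ_l$ reduces — after writing $\kappa^{(m+n-1)}$ as the appropriate composite built from $\kappa^{(n)}$ and $\kappa^{(m)}$ using the associativity coherence, and invoking naturality of $\kappa$ — to the functoriality of $F$.

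It remains to check that the two assignments are mutually inverse, which I would do by inspection. The round trip starting from a functor is immediate on objects, on morphisms (there $\kappa^{(1)}=\mathrm{id}$), and on $\kappa$ and $\eta$ (there $F(\mathrm{id})=\mathrm{id}$). For the round trip starting from an algebra, the only nontrivial point is that a general $f\in\mathcal{Q}(n,i)$ is recovered: operadically, $f$ equals the $\circ_1$-composite of $f$ regarded as a unary operation with the $n$-ary operation $\mathrm{id}_{i(1)\otimes\cdots\otimes i(n)}$, so compatibility of $\varphi$ with $\circ_1$ gives $\varphi(f)=F(f)\circ\varphi(\mathrm{id}_{i(1)\otimes\cdots\otimes i(n)})$, while compatibility with the $\circ_l$ maps forces $\varphi(\mathrm{id}_{i(1)\otimes\cdots\otimes i(n)})=\kappa^{(n)}$. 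The main, and essentially only, obstacle is the coherence bookkeeping: making precise that $\kappa^{(n)}$ is well defined independently of the bracketing, that the associativity of the operadic composition $\circ_l$ corresponds to the pentagon and unit axioms for $(\kappa,\eta)$, and that the $\mathbb{S}_n$-action on $\mathcal{Q}(n,i)$ coming from the symmetry of $\mathcal{C}$ matches the $\mathbb{S}_n$-action on $\End_A$ permuting tensor factors under the hexagon axiom. Once a convention for inserting associators and unitors is fixed (legitimate by Mac Lane's theorem), each of these is a routine diagram chase.
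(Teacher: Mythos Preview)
Your proof is correct and follows essentially the same approach as the paper: both directions of the correspondence are defined identically (the paper writes $\kappa_{\alpha,\beta}:=\varphi(2,i)(\mathrm{id}_{\alpha\otimes\beta})$ and $\varphi(n,i):=F\circ\kappa$, which is your $\kappa_{X,Y}:=\varphi(\mathrm{id}_{X\otimes Y})$ and $\varphi(f):=F(f)\circ\kappa^{(n)}$). The paper's proof is a brief sketch that records only these two assignments, whereas you have supplied the missing details — the definition of $F$ on morphisms and of $\eta$, the verification of coherence via Mac Lane's theorem, and the check that the two constructions are mutually inverse — so your version is strictly more complete.
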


\begin{proof}
Let $\lbrace A_{\alpha} \rbrace_{\alpha \in Ob(\mathcal{C})}$ be a family of chain complexes and let 
\[\varphi : \mathcal{Q} \to \End_A \] 
be a dg $\mathcal{Q}$-algebra structure. Setting $F(\alpha) \coloneqq A_{\alpha}$ for all $\alpha \in Ob(\mathcal{C})$, we obtain a symmetric monoidal functor $\mathcal{C} \to \ch_*(R)$ such that
\[\kappa_{\alpha,\beta} \coloneqq \varphi(2,i)(\mathrm{id}_{\alpha \otimes \beta}) \]
where $i : \langle 2\rangle \to Ob(\mathcal{C})$ is defined by $i(0) = \alpha \otimes \beta$, $i(1) = \alpha$ and $i(2) = \beta$. Conversely, out of a symmetric monoidal functor $F : \mathcal{C} \to \ch_*(R)$ one defines a dg $\mathcal{Q}$-algebra $(A,\varphi)$ with 
\[A \coloneqq \lbrace F(c) \rbrace_{c \in Ob(\mathcal{C})}  \quad \mbox{and}  \quad \varphi(n,i) \coloneqq F \circ \kappa\] for all $i : \langle n\rangle \to Ob(\mathcal{C})$, where $\kappa$ denotes the successive compositions giving \[F(i(1)) \otimes \cdots \otimes F(i(n)) \to F(i(1) \otimes \cdots i(n)) \ . \qedhere \]  
\end{proof}

The two following propositions are direct consequences of the definitions. 

\begin{proposition}\label{ima}
Let $\P$ be an operad in sets. If $F : \mathcal{C} \to \ch_*(R)$ is a formal symmetric monoidal functor and if $A$ is a $\P$-algebra in $\mathcal{C}$ (resp. operad), then $F(A)$ is a formal $\P$-algebra (resp. operad).
\end{proposition}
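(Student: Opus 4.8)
The plan is to unwind the definitions on both sides and check that the zig-zag of symmetric monoidal natural transformations witnessing formality of $F$ transports, objectwise, to a zig-zag of $\P$-algebra (resp. operad) quasi-isomorphisms. First I would recall the setup: a $\P$-algebra in $\mathcal{C}$ is an object $A$ of $\mathcal{C}$ together with structure maps $\P(n) \otimes A^{\otimes n} \to A$ (or, in the operad case, $A$ is a collection $\{A(n)\}$ with composition maps), all expressed purely in terms of the symmetric monoidal structure of $\mathcal{C}$. Applying a lax symmetric monoidal functor $G : \mathcal{C} \to \ch_*(R)$ produces a $\P$-algebra $G(A)$ in $\ch_*(R)$: one composes $\P(n) \otimes G(A)^{\otimes n} \to G(\P(n) \otimes A^{\otimes n}) \to G(A)$ using the lax structure maps $\kappa$, and the operad axioms for $\P$ acting on $G(A)$ follow from naturality of $\kappa$ and the coherence of the lax monoidal structure. (Here I use that $\P$ is an operad in \emph{sets}, so $\P(n) \otimes (-)$ just means a coproduct of copies of $(-)$ indexed by $\P(n)$, which every lax monoidal functor preserves compatibly; this is exactly why the set-valued hypothesis is needed.)

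Next I would observe that this construction $A \mapsto G(A)$ is functorial in the lax symmetric monoidal functor $G$: a monoidal natural transformation $\Phi : G \Rightarrow G'$ induces a map of $\P$-algebras $G(A) \to G'(A)$, because the compatibility of $\Phi$ with the lax structure maps is precisely what is needed for the square relating the two $\P$-actions to commute. Likewise in the operad case $\Phi$ is compatible with operadic composition. Thus, given the formality zig-zag
\[ F \; \overset{\Phi_1}{\longleftarrow} \; F_1 \; \longrightarrow \; \cdots \; \longleftarrow \; F_n \; \overset{\Phi_n}{\longrightarrow} \; H \circ F \]
of symmetric monoidal functors, I apply the construction at the fixed object $A$ to obtain a zig-zag of $\P$-algebras (resp. operads)
\[ F(A) \; \longleftarrow \; F_1(A) \; \longrightarrow \; \cdots \; \longleftarrow \; F_n(A) \; \longrightarrow \; (H \circ F)(A). \]
Each arrow induced by $\Phi_i$ is a quasi-isomorphism of underlying chain complexes because $\Phi_i(A)$ is a quasi-isomorphism by hypothesis, and quasi-isomorphisms of $\P$-algebras are exactly morphisms of $\P$-algebras that are quasi-isomorphisms on underlying complexes.

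It remains only to identify the right-hand end with the induced structure on homology. For this I would note that the lax structure maps of $H \circ F$ are, by definition of the composite lax monoidal functor, the composites of the Künneth maps for $H$ with $H$ applied to the $\kappa$ of $F$; chasing through, the $\P$-algebra structure on $H(F(A))$ obtained this way agrees with the one induced on homology by the $\P$-algebra $F(A)$, i.e.\ with $\varphi_*$ in the notation of Definition~\ref{formal}. Hence the displayed zig-zag is exactly a formality zig-zag for the $\P$-algebra (resp. operad) $F(A)$, as required. The only mildly delicate point — the ``main obstacle'' in an otherwise formal argument — is the careful bookkeeping of coherences: verifying that applying a lax symmetric monoidal functor really does send $\P$-algebras to $\P$-algebras and monoidal natural transformations to $\P$-algebra maps, which amounts to writing down the relevant commuting diagrams and invoking naturality of $\kappa$ together with the associativity and symmetry constraints. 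Since $\P$ is set-valued these diagrams reduce to finite coproducts and are routine, so no genuine difficulty arises.
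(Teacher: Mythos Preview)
Your proof is correct and follows essentially the same approach as the paper's own proof, which is terse: a lax monoidal functor sends $\P$-algebras to $\P$-algebras, a monoidal natural transformation induces $\P$-algebra maps, so evaluating the formality zig-zag at $A$ gives the desired result. You have simply spelled out in detail the coherences and the identification of the endpoint $(H\circ F)(A)$ with the induced structure, points the paper leaves implicit.
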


\begin{proof}
A lax monoidal functor sends $\P$-algebras to $\P$-algebras. A natural transformation between lax monoidal functors sends $\P$-algebras to morphisms of $\P$-algebras. If we evaluate the zig-zag connecting $F$ to $H\circ F$ on $A$, we obtain a formality zig-zag for $A$.
\end{proof}

\begin{proposition}\label{composition} 
 	Let $U : \mathcal{B} \to \ch_*(R)$ be a formal symmetric monoidal functor. For every symmetric monoidal functor $F : \mathcal{C}\to \mathcal{B}$, the composition
 	\[U \circ F : \mathcal{C}\to \ch_*(R) \]
is a formal symmetric monoidal functor.  
\end{proposition}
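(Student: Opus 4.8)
The plan is to unwind the definition of formality for $U$ and transport the zig-zag along $F$. Concretely, since $U : \mathcal{B} \to \ch_*(R)$ is formal, Definition \ref{formal2} provides a zig-zag of natural transformations of symmetric monoidal functors
\[ U \; \overset{\Psi_1}{\longleftarrow} \; U_1 \; \longrightarrow \; \cdots \; \longleftarrow \; U_n \; \overset{\Psi_n}{\longrightarrow} \; H \circ U \]
in which each $\Psi_i(Y)$ is a quasi-isomorphism for every object $Y$ of $\mathcal{B}$. The first step is to precompose the entire diagram with $F$: each $U_i \circ F$ is again a symmetric monoidal functor $\mathcal{C} \to \ch_*(R)$ (a composite of symmetric monoidal functors is symmetric monoidal, with structure maps obtained by pasting the two families of coherence morphisms), and each $\Psi_i \ast F$ is still a natural transformation of symmetric monoidal functors, since whiskering a monoidal natural transformation by a monoidal functor is again a monoidal natural transformation.

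The second step is to check the two points where the statement of Definition \ref{formal2} is not purely formal. First, $(\Psi_i \ast F)(X) = \Psi_i(F(X))$ is a quasi-isomorphism for every object $X$ of $\mathcal{C}$, because $F(X)$ is an object of $\mathcal{B}$ and $\Psi_i$ is a pointwise quasi-isomorphism by hypothesis. Second, the right-hand end of the whiskered zig-zag is $(H \circ U) \circ F = H \circ (U \circ F)$, which is exactly the target required by the definition of formality for the functor $U \circ F$. Putting these together, the whiskered zig-zag
\[ U \circ F \; \overset{\Psi_1 \ast F}{\longleftarrow} \; U_1 \circ F \; \longrightarrow \; \cdots \; \longleftarrow \; U_n \circ F \; \overset{\Psi_n \ast F}{\longrightarrow} \; H \circ (U \circ F) \]
exhibits $U \circ F$ as formal.

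There is essentially no obstacle here: the only thing to be a little careful about is bookkeeping the symmetric monoidal structure on the composites and on the whiskered natural transformations, i.e. verifying that precomposition with a (lax) symmetric monoidal functor preserves the property of being a symmetric monoidal natural transformation. This is a routine coherence check, entirely analogous to the verification implicit in Proposition \ref{ima}, and I would simply assert it rather than expand the pentagon/hexagon diagrams. The key conceptual input — that $H \circ (U \circ F) = (H \circ U) \circ F$, so that whiskering lands at the correct endpoint — requires no argument at all.
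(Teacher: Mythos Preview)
Your proof is correct and is exactly the intended argument: the paper states that this proposition is a ``direct consequence of the definitions'' and gives no further proof, so your explicit whiskering of the formality zig-zag by $F$ simply spells out what the paper leaves implicit.
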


\begin{remark}[An application of the formality of an operad]
	Over a characteristic zero field $\mathbf{k}$, there is a Quillen equivalence between algebras over a formal dg operad $\P$ and algebras encoded by its homology $H(\P)$, see \cite[Theorem 4.7.4]{Hinich97}. This leads to Quillen equivalences between
	\begin{itemize}
		\item[$\centerdot$] $A_\infty$-algebras and associative algebras; 
		\item[$\centerdot$] $C_\infty$-algebras and commutative algebras;
		\item[$\centerdot$] $L_\infty$-algebras and Lie algebras;
		\item[$\centerdot$] Gerstenhaber algebras and algebras over the operad $C_*(\mathcal{D}_2;\mathbf{k})$, since the little disks operad $\mathcal{D}_2$ is formal and its homology is the Gerstenhaber operad, see Example \ref{little disks operad}. 
	\end{itemize}
	Kontsevich formality theorem can be improved using the last Quillen equivalence. Given a smooth manifold $M$, it asserts that the Gerstenhaber algebra of polyvector fields $\Gamma(\Lambda TM)$ is weakly equivalent to the $C_*(\mathcal{D}_2)$-algebra of Hochschild cochains on the algebras of smooth functions on $M$. This version of Kontsevich formality is due to Tamarkin and generalizes the classical formulation in terms of $L_\infty$-algebras. The fact that the Hochschild cochain complex carries an action of $C_*(\mathcal{D}_2)$ is a highly non-trivial theorem called Deligne's conjecture and was initially proved by Tamarkin, see \cite{Tam98,hinich03bis}.
\end{remark}

\subsection{Origins in rational homotopy theory}\label{Key}

The idea of formality originated in the field of rational homotopy theory. For an overview of rational homotopy theory, we refer the reader to \cite{BS24}. Very briefly, the rational homotopy category \[\mathrm{Ho(Top)}_{\mathbb{Q}}\] is obtained from the category of topological spaces by inverting maps that induce isomorphisms on homology with rational coefficients. This is a localization of the usual homotopy category in which one only inverts weak homotopy equivalences. The set of morphisms in this category are much more computable than in the usual homotopy category and still capture interesting invariants of homotopy types. For instance, the rational homotopy groups of a simply connected topological space $X$ can be computed as maps from a sphere to $X$ in the rational homotopy category:
\[\pi_n(X)\otimes_{\mathbb{Z}}\mathbb{Q}\cong [S^n,X]_{\mathrm{Ho(Top)}_{\mathbb{Q}}}.\]

The most fundamental theorem in the field of rational homotopy theory is due to Sullivan. It relies on the construction of a functorial commutative differential graded algebra (CDGA), the Sullivan algebra of polynomial forms 
\[X\mapsto \Omega^*_{PL}(X) \ ,\] 
that faithfully reflects the rational homotopy type of $X$ under mild hypotheses. The cohomology of this algebra is isomorphic to the cohomology of $X$.

We shall now explain this construction with more details. We start with the construction of the Sullivan algebra of polynomial forms. For every chain complex $V$, we denote by $S(V)$ the associated \emph{symmetric algebra} defined by 
\[S(V) \coloneqq \bigoplus_{r \geqslant  0} \left(V^{\otimes r}\right)_{\mathbb{S}_r} \ ,\]
and equipped with the only differential extending the differential of $V$ and compatible with the Leibniz rule.

\begin{definition}
For $n\geq 0$, let $K_n^*$ be the cochain complex over $\QQ$ defined by
\[K_n^*:=\bigoplus_{i=0}^n\QQ t_i\to \bigoplus_{i=0}^n\QQ dt_i\to0\to\ldots,\]
with the differential being obvious from the notations. The $\mathrm{CDGA}$ of polynomial forms on $\Delta^n$ is defined as 
\[\Omega^*_{PL}\left(\Delta^n\right) \coloneqq \frac{S(K_n^*)}{\langle\sum t_i-1 \rangle}.\] This induces a simplicial object in the category of CDGAs. This definition extends to a functor of polynomial forms
 \[ 
\begin{array}{cccl}
	\Omega^*_{PL}:& \mathrm{sSet} & \longrightarrow&  \mathrm{CDGA}^{op} \  \\
& 	X & \longmapsto & \Hom_{\mathrm{sSet}}\left(X, \Omega^*_{PL}\left(\Delta^{\bullet} \right) \right)
\end{array}
  \] 
This functor is the left adjoint in an adjunction
  \[\Omega^*_{PL}:\mathrm{sSet}\leftrightarrows \mathrm{CDGA}^{op}:\langle -\rangle \] where the right adjoint is then simply given by the formula
  \[\langle A\rangle_n\coloneqq\Hom_{\mathrm{CDGA}}(A,\Omega^*_{PL}(\Delta^n)) \ .\] Given a topological space $X$, one defines \[\Omega^*_{PL}(X) \coloneqq \Omega^*_{PL}(S_{\bullet}(X)) \] where $S_{\bullet}(X) : = \Hom_{\mathrm{Top}}(\Delta^{\bullet},X)$ is the singular simplicial set associated to $X$. The fundamental theorems of rational homotopy theory states that the induced functor from the rational homotopy category of simplicial sets
  \[\Omega^*_{PL}:\mathrm{Ho(sSet)}_{\mathbb{Q}}\to \mathrm{Ho(CDGA)^{op}}\]
  is fully faithful when restricted to nilpotent simplicial of finite type.
\end{definition}

\begin{definition}\label{definition : formality of spaces}
A topological space $X$ is formal, if $\Omega^*_{PL}(X)$ is related to $H^*(X;\QQ)$ by a zig-zag of quasi-isomorphisms of $\mathrm{CDGA}$s. 
\end{definition}

It follows that, if $X$ is formal, one can reconstruct the rational homotopy type of $X$ simply from the datum of the cohomology algebra of $X$. In particular, if $X$ is simply connected one gets the following formula for homotopy groups
\[\pi_i(X)\otimes\mathbb{Q}\cong \pi_i\langle  Q(H^*(X,\mathbb{Q}))\rangle\]
where $Q(H^*(X,\mathbb{Q}))$ denotes a cofibrant replacement of $H^*(X;\mathbb{Q})$ in the model category of $\mathrm{CDGA}$s. It follows from this discussion that the determination of rational homotopy groups of a formal space becomes a purely algebraic computation.
	
\begin{example} $\leavevmode$
		\begin{enumerate}
			\item The sphere $\mathbb{S}^n$ is formal, for all $n \geqslant 1$. Its cohomology is given by \[H^*\left(\mathbb{S}^n; \mathbb{Q} \right) \cong \mathbb{Q} [x]/x^2 \ , \] where $x$ is such that $|x| = n$. Let us introduce the following CDGA \[ \mathcal{M}_n =  \left\{
			\begin{array}{l l l  l}
				\QQ[u], & \begin{array}{l}
					|u| = n  
				\end{array} & \begin{array}{l} d = 0 \end{array} & \mbox{if } n \mbox{ is odd} \\
				\QQ[u,v], &  \begin{array}{l}
					|u| = n  \\  |v| = 2n -1
				\end{array}   &  \begin{array}{l} du = 0 \\ dv = u^2 \end{array}  & \mbox{if } n \mbox{ is even}
			\end{array}
			\right.  \] 
There is a zig-zag of quasi-isomorphisms 
\[ \Omega^*_{PL}(\mathbb{S}^n)  \; \underset{\sim}{\overset{f}{\longleftarrow}} \;  \mathcal{M}_n \;   \underset{\sim}{\overset{g}{\longrightarrow}}  \; H^{*}(\mathbb{S}^n; \QQ) \ , \]
where $f$ and $g$ are defined as follows. Let us set $g (u) = x$ for all $n$ and $g(v) = 0$ for $n$ even. Since the cohomology of $\Omega^*_{PL}(\mathbb{S}^n)$ is given by $H^{*}(\mathbb{S}^n; \QQ)$, there exist a cocycle $\tilde{u} \in \Omega^n_{PL}(\mathbb{S}^n)$ such that $[\tilde{u}]  = x$ for all $n$ and  $\tilde{v} \in \Omega_{PL}^{2n-1}(\mathbb{S}^n)$ such that $d\tilde{v}  = \tilde{u}^2$ in the case where $n$ even. One defines a quasi-isomorphism $f$ by setting $f(u) =\tilde{u}$ and $f(v) =\tilde{v}$. 
			Although the integral homotopy groups of spheres are still mostly unknown, the previous result provides a method to efficiently calculate their rational homotopy groups. If $n$ is odd, this leads to 
\[ \pi_*(S^n)\otimes\mathbb{Q} \cong  \left\{
			\begin{array}{c l }
			\mathbb{Q}  &  \mbox{if } *=n \\
				0 &  \mbox{otherwise \ ,}
			\end{array}
			\right.  \] 
and if $n$ is even, we have
\[ \pi_*(S^n)\otimes\mathbb{Q} \cong  \left\{
			\begin{array}{c l }
			\mathbb{Q}  &  \mbox{if } *=n,2n-1 \\
				0 &  \mbox{otherwise \ .}
			\end{array}
			\right.  \]

			\item Complex projective space $\mathbb{C P}^n$, for all $n \geqslant 0$ are formal. The proof is very similar to the proof for even spheres.
			
			\item Lie groups are formal. The proof is similar to the one of spheres and generalizes to any simply connected spaces $X$ whose rational cohomology is free as a graded algebra.
	
		\end{enumerate}
\end{example}

\section{\textcolor{bordeau}{The example of compact Kähler manifolds}}\label{2}

There is a long tradition of using Hodge theory as a tool for proving formality results. This section focuses on the first result in this direction: the formality of compact Kähler manifolds established by P.\ Deligne, P.\ Griffiths, J.\ Morgan and D.\ Sullivan in \cite{DGMS75}. This result can be stated as formality of some functors, through two theorems that are very much related: a contravariant version and a covariant one.

\subsection{The contravariant version}

Recall that a compact Kähler manifolds is a manifold with three mutually compatible structures: a complex structure, a Riemannian structure, and a symplectic structure. One important source of examples is given by smooth projective complex varieties. Those are K\"ahler by pulling back the K\"ahler structure of the complex projective space in which they embed. Complex projective spaces have a K\"ahler structure given by the Fubini-Study metric, see \cite[Section 3.3.2]{Voi02}. Let $\K$ be the category of compact Kähler manifolds.

\begin{theorem}[{\cite[Main theorem]{DGMS75}}]\label{contravariant}
	The functor of differential forms $$\mathcal{E}^* : \K^{op} \to \ch^*(\mathbb{R})$$ is a formal symmetric monoidal functor. 
\end{theorem}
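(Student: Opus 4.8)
The plan is to exhibit an explicit zig-zag of lax symmetric monoidal functors realizing the formality, following the classical $dd^c$-lemma argument of \cite{DGMS75} but keeping track of monoidality and naturality throughout. Recall that on a compact K\"ahler manifold $M$ one has the $\partial\bar\partial$-lemma: a form that is $d$-closed and $d$-exact, and lies in the image of $\partial$ (or $\bar\partial$), is $\partial\bar\partial$-exact; equivalently, writing $d^c = \tfrac{1}{2i}(\partial - \bar\partial)$, any form which is $d$-closed and $d^c$-closed and which is either $d$-exact or $d^c$-exact is $dd^c$-exact.

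\begin{proof}[Proof sketch]
For a compact K\"ahler manifold $M$, consider the commutative differential graded algebra $\mathcal{E}^*(M)$ of complex (or real) differential forms together with two of its sub-CDGAs and quotient CDGAs built from the operator $d^c$. Concretely, let $\mathcal{E}^*_c(M) \coloneqq \ker d^c \subseteq \mathcal{E}^*(M)$, which is a sub-CDGA since $d^c$ is a derivation that anticommutes with $d$ and squares to zero; and let $\mathcal{H}^*_{d^c}(M) \coloneqq \mathcal{E}^*_c(M)/(\operatorname{im} d^c \cap \ker d^c)$, the $d^c$-cohomology, which inherits a CDGA structure with the differential induced by $d$. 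All three assignments $M \mapsto \mathcal{E}^*(M)$, $M \mapsto \mathcal{E}^*_c(M)$, $M \mapsto \mathcal{H}^*_{d^c}(M)$ are contravariant functors on $\K$, because pullback of forms along a holomorphic map commutes with both $\partial$ and $\bar\partial$, hence with $d$ and $d^c$. Each carries a lax symmetric monoidal structure via the wedge product of forms composed with the external-product/K\"unneth map $\mathcal{E}^*(M)\otimes \mathcal{E}^*(N)\to \mathcal{E}^*(M\times N)$; one checks $\ker d^c$ and the ideal defining $d^c$-cohomology are respected by these maps. The inclusion and projection
\[
\mathcal{E}^*(-) \;\xleftarrow{\;\iota\;}\; \mathcal{E}^*_c(-) \;\xrightarrow{\;\pi\;}\; \mathcal{H}^*_{d^c}(-)
\]
are then natural transformations of lax symmetric monoidal functors.
\end{proof}

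The first key step is to verify that $\iota$ and $\pi$ are objectwise quasi-isomorphisms; this is exactly where the $\partial\bar\partial$-lemma enters. That $\iota$ is a quasi-isomorphism amounts to: every $d$-cohomology class has a $d^c$-closed representative (surjectivity), and a $d^c$-closed form that is $d$-exact is $d$-exact through a $d^c$-closed primitive — both follow from the $dd^c$-lemma by the standard diagram chase. That $\pi$ is a quasi-isomorphism is the dual statement in $\mathcal{E}^*_c(M)$. The second key step is the observation that $d=0$ on $\mathcal{H}^*_{d^c}(M)$: a $d^c$-closed form whose $d$-differential is again $d^c$-closed has, by the $dd^c$-lemma, a $d$-differential lying in $\operatorname{im} d^c$, hence zero in $d^c$-cohomology. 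So $\mathcal{H}^*_{d^c}(-)$ is a functor valued in CDGAs with zero differential, and the canonical map $\mathcal{H}^*_{d^c}(M)\to H^*(\mathcal{E}^*(M)) = H^*(M;\mathbb{R})$ (or $\mathbb{C}$) induced by $\iota,\pi$ is an isomorphism of graded algebras, natural and monoidal. Composing the zig-zag with this identification yields a zig-zag of lax symmetric monoidal natural transformations connecting $\mathcal{E}^*$ to $H\circ\mathcal{E}^*$, which is precisely the assertion of Definition~\ref{formal2}.

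The main obstacle is bookkeeping rather than conceptual: one must confirm that \emph{all} the structure maps — the differentials, the inclusions/projections, the K\"unneth/wedge lax structure maps, and the unit maps — are simultaneously compatible, so that $\iota$ and $\pi$ are genuinely morphisms of lax symmetric monoidal functors and not merely of functors to chain complexes. A subtlety worth flagging is the choice of ground field: working with complex forms makes $d^c$ and the $\partial\bar\partial$-lemma most transparent, and one then descends to $\mathbb{R}$ either by restricting to real forms (noting $d^c$ preserves reality since $\tfrac{1}{2i}(\partial-\bar\partial) = \tfrac{1}{2}(J^{-1}dJ)$ is a real operator) or by an extension-of-scalars argument. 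The covariant version alluded to in the text would then be obtained by dualizing, replacing forms by currents or by applying a suitable duality functor, but that is outside the present statement.
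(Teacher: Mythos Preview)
Your proposal is correct and follows essentially the same approach as the paper's own proof: the zig-zag through $d^c$-closed forms and $d^c$-cohomology, the use of the $dd^c$-lemma to verify that both maps are quasi-isomorphisms and that the induced differential vanishes, and the observation that everything is natural and monoidal. The only differences are cosmetic (your normalization of $d^c$ differs by a scalar from the paper's $d^c=-JdJ$, and you are somewhat more explicit about the lax monoidal bookkeeping).
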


\begin{proof}
Let $M$ be a compact Kähler manifold. Since $M$ is a complex manifold, its tangent bundle is equipped with an endomorphism $J : TM \to TM$ satisfying $J^2 = -id$. By dualizing $J$, it induces an endomorphism of the cotangent vector bundle and therefore an automorphism of the de Rham complex $\mathcal{E}^*(M)$. Thus, this complex is equipped with its usual differential denoted $d$, but also with another operator called $d^c$ defined by $d^c = -J d J$. These operators make $\mathcal{E}^*(X)$ into a bicomplex that moreover satisfies a lemma, called $dd^c$-lemma at the heart of the proof.

\begin{lemma}[{\cite[$dd^c$-lemma]{DGMS75}}]
If $x$ is a differential form such that $dx = 0$ and $x = d^c y$, then $x = dd^c(z)$ for some $z$. 
\end{lemma}

Let $\mathcal{E}^*(M)$ be the real de Rham complex of $M$, $^c \mathcal{E}^*(M)$ be the subcomplex of $d^c$-closed forms, and $H^*_{d^c}(M)$ be the quotient complex $^c \mathcal{E}^*(M) /d^c(\mathcal{E}^*(M))$. Then we have a diagram of the form
\[(\mathcal{E}^*(M),d) \overset{i}{\longleftarrow} (^c\mathcal{E}^*(M),d) \overset{\pi}{\longrightarrow} (H^*_{d^c}(M),d)\]
where $i$ corresponds to the inclusion of the subcomplex and $\pi$ to the quotient. Let us first prove that $i$ induces an isomorphism in cohomology. For all \[[x] \in H^*(\mathcal{E}^*(M),d) \ ,\] the form $d^cx$ satisfies the hypothesis of the $dd^c$-lemma. Thus, there exists an element $y$ such that $d^c x = d d^c y$. Setting $z = x + dy$, we get $d^c(z) = 0$ and $i$ induces a surjection in cohomology. Let $y \in {}^c\mathcal{E}^*(M)$ be a closed form which is exact in $\mathcal{E}^*(M)$. Then, we have $d^c y = 0 = dy$ and $y = dz$. Thus, there exists $w$ such that $y = dd^c w$ and $y$ is necessarily trivial. This shows that $i_*$ is injective and thus, an isomorphism. Let us now prove that $\pi$ induces an isomorphism in cohomology. For all \[[y] \in H^*(^c\mathcal{E}^*(M),d) \ , \] the element $y$ is $d^c$-closed. Thus $dy$ satisfies the hypothesis of the $dd^c$-lemma. There exists $z$ such that $dy = d d^c z$. Setting $ x = y + d^c z$, then $dx = 0$ and $[x] = [y]$. Thus, $\pi$ is surjective. Finally, let $y$ be such that $[y] = 0$ in $(H^*_{d^c}(M),d)$. Then $y = d^c(w)$ and by the $dd^c$-lemma, there exists $z$ such that $y = d d^c z$ and $\pi$ is injective. 

\noindent Note that the differential induced by $d$ on $H^*_{d^c}(M)$ is $0$. Indeed, if $d^c y = 0$, then by the $dd^c$-lemma, there exists $w$ such that $dy = dd^c w$ and $dy \in \mathrm{Im} (d^c)$. We deduce that $[dy] = 0$ in $H^*_{d^c}(M)$. Thus, there exists an isomorphism 
\[H^*_{d^c}(M) \cong H^*(\mathcal{E}^*(M)) \;.\] 
Furthermore, since $d^c$ satisfies the Leibniz rule, $^c\mathcal{E}^*$ inherits a monoidal symmetric functor structure from that of $\mathcal{E}^*$. Finally, since the morphisms $i$ and $\pi$ are natural and compatible with the structure of monoidal symmetric functors, we conclude that $\mathcal{E}^*$ is formal.
\end{proof}

The functor of de Rham forms $\mathcal{E}^*(-)$ on smooth differentiable manifolds is naturally quasi-isomorphic to the functor $\Omega^*_{PL}(-)\otimes_{\mathbb{Q}}\mathbb{R}$, see \cite[Corollary 9.9]{GM}. This leads to the following corollary.

\begin{corollary}[{\cite[Corollary 1]{DGMS75}}]\label{coro : DGMS}
	Let $M$ a compact Kähler manifold. The real homotopy type of $M$, that is the homotopy type of the real CDGA \[\Omega^*_{PL}(M)\otimes_{\mathbb{Q}}\mathbb{R} \ ,\] is determined by the real cohomology algebra of $M$. In particular, if $M$ is pointed and simply connected, there is an isomorphism
\[\pi_*(M) \otimes_{\mathbb{Z}} \mathbb{R} \cong \pi_*\langle Q(H^*(M, \mathbb{R})) \rangle\]
where $\langle-\rangle$ is the real version of Sullivan realization functor and $Q$ denotes a cofibrant replacement in the model category of CDGAs.
\end{corollary}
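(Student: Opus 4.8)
The plan is to deduce the corollary from Theorem \ref{contravariant} by transporting the formality of the de Rham functor through the standard comparison with Sullivan's polynomial forms. First I would observe that $M$, being a compact Kähler manifold, is an object of $\K$, so evaluating the zig-zag of natural transformations of symmetric monoidal functors that witnesses the formality of $\mathcal{E}^*$ at the single object $M$ yields a zig-zag of quasi-isomorphisms of CDGAs between $\mathcal{E}^*(M)$ and $H^*(M;\mathbb{R})$. Note that for this step only the statement that $\mathcal{E}^*(M)$ is a formal CDGA is needed, not the full functoriality of Theorem \ref{contravariant}.

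Next I would invoke \cite[Corollary 9.9]{GM}, which connects the de Rham functor $\mathcal{E}^*(-)$ to $\Omega^*_{PL}(-)\otimes_{\mathbb{Q}}\mathbb{R}$ by a natural zig-zag of CDGA quasi-isomorphisms on smooth manifolds. Splicing this zig-zag evaluated at the smooth manifold $M$ onto the previous one produces a zig-zag of CDGA quasi-isomorphisms relating $\Omega^*_{PL}(M)\otimes_{\mathbb{Q}}\mathbb{R}$ to $H^*(M;\mathbb{R})$. This is exactly the assertion that the real homotopy type of $M$, i.e.\ the weak equivalence class of the real CDGA $\Omega^*_{PL}(M)\otimes_{\mathbb{Q}}\mathbb{R}$, is determined by the real cohomology algebra $H^*(M;\mathbb{R})$.

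For the last clause, I would assume $M$ pointed and simply connected (it automatically has finite-type cohomology since it is compact) and use the real analogue of the Sullivan correspondence recalled in Section \ref{Key}, obtained by base change along $\mathbb{Q}\to\mathbb{R}$. It gives $\pi_*(M)\otimes_{\mathbb{Z}}\mathbb{R}\cong\pi_*\langle Q(\Omega^*_{PL}(M)\otimes_{\mathbb{Q}}\mathbb{R})\rangle$, where $Q$ denotes a cofibrant replacement in the model category of real CDGAs and $\langle-\rangle$ is the real Sullivan realization functor. Since cofibrant replacement is invariant under quasi-isomorphism and $\langle-\rangle$ is a right Quillen functor, replacing $\Omega^*_{PL}(M)\otimes_{\mathbb{Q}}\mathbb{R}$ by the quasi-isomorphic $H^*(M;\mathbb{R})$ leaves $\pi_*\langle Q(-)\rangle$ unchanged, which yields the displayed isomorphism. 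The only genuinely delicate point is setting up the real version of rational homotopy theory carefully enough to guarantee that $\pi_*\langle Q(-)\rangle$ is a weak-equivalence invariant; once the rational case recalled in Section \ref{Key} is granted, this is routine model-categorical bookkeeping, so I do not anticipate a serious obstacle here.
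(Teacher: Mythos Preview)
Your proposal is correct and follows essentially the same approach as the paper: the paper simply remarks that $\mathcal{E}^*(-)$ is naturally quasi-isomorphic to $\Omega^*_{PL}(-)\otimes_{\mathbb{Q}}\mathbb{R}$ by \cite[Corollary 9.9]{GM} and states the corollary as an immediate consequence of Theorem~\ref{contravariant}. Your write-up spells out in slightly more detail the model-categorical bookkeeping for the final isomorphism of homotopy groups, but the underlying argument is identical.
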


In light of Definition \ref{definition : formality of spaces}, it is very natural to wonder to what extent formality depends on the coefficient ring. A formality result with coefficients in a certain field $\mathbb{K}$ will also be satisfied for any extension $\mathbb{L}$ of this field. Formality descent gives a partial converse to this result.

\begin{theorem}[Formality descent]\label{descent}
Let $\mathbb{L}\subset\mathbb{K}$ be two characteristic zero fields. Let $A$ be a $\mathrm{CDGA}$ over $\mathbb{L}$ with finite type cohomology. The algebra $A$ is formal if and only if $A\otimes_{\mathbb{L}} \mathbb{K}$ is formal. 
\end{theorem}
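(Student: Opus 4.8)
The plan is to prove the non-trivial direction: if $A \otimes_{\mathbb{L}} \mathbb{K}$ is formal, then $A$ is formal. (The other direction is the ``extension of scalars'' remark already stated, since $-\otimes_{\mathbb{L}}\mathbb{K}$ is a lax symmetric monoidal functor sending quasi-isomorphisms of CDGAs over $\mathbb{L}$ to quasi-isomorphisms over $\mathbb{K}$, and it commutes with homology because $\mathbb{K}$ is flat over $\mathbb{L}$.) The first step is to reduce to a minimal model situation: since $A$ has finite type cohomology, fix a Sullivan minimal model $\iota\colon M \xrightarrow{\sim} A$ over $\mathbb{L}$; then $M \otimes_{\mathbb{L}} \mathbb{K}$ is a Sullivan minimal model of $A \otimes_{\mathbb{L}} \mathbb{K}$ over $\mathbb{K}$. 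Formality of $A$ is equivalent to formality of $M$, i.e.\ to the existence of a quasi-isomorphism $M \xrightarrow{\sim} H^*(M)$, and likewise over $\mathbb{K}$. So the statement becomes: $M$ is formal over $\mathbb{L}$ if and only if $M \otimes_{\mathbb{L}} \mathbb{K}$ is formal over $\mathbb{K}$.

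The second, and central, step is to recast formality of a minimal Sullivan algebra $M$ as the vanishing of a sequence of obstruction classes lying in finite-dimensional $\mathbb{L}$-vector spaces, constructed so that they are compatible with extension of scalars. Concretely, one uses the bigraded/filtered model machinery (Halperin--Stasheff, Sullivan): $M$ is formal iff one can construct inductively a quasi-isomorphism to $H^*(M)$, and the obstruction to extending the construction at each stage lives in a quotient of $\mathrm{Hom}$-spaces between the finite-dimensional pieces of $M$ and of $H^*(M)$, i.e.\ in the cohomology of a certain explicitly defined complex of $\mathbb{L}$-vector spaces (the ``Harrison''-type or bigraded derivation complex of $H^*(M)$). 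Because $H^*(M)$ has finite type and the $M_i$ are finitely generated, each such obstruction group $O_n$ is a finite-dimensional $\mathbb{L}$-vector space, and the formation of the whole obstruction tower commutes with $-\otimes_{\mathbb{L}}\mathbb{K}$: one has $O_n(M \otimes \mathbb{K}) \cong O_n(M) \otimes_{\mathbb{L}} \mathbb{K}$, and the obstruction class over $\mathbb{K}$ is the image of the one over $\mathbb{L}$. Since $\mathbb{K}$ is faithfully flat over $\mathbb{L}$, a class in $O_n(M)$ vanishes iff its image in $O_n(M)\otimes_{\mathbb{L}}\mathbb{K}$ vanishes; running this through the induction gives formality over $\mathbb{L}$ from formality over $\mathbb{K}$.

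An alternative, more conceptual route — which is probably the one used in the cited literature, and which I would present as the main argument — is the ``descent of a point in a variety'' argument of Sullivan, as sharpened by others: the set of CDGA maps $M \to H^*(M)$ inducing the identity on cohomology, up to homotopy, is the set of $\mathbb{L}$-points of an affine $\mathbb{L}$-scheme (or pro-algebraic variety) $\mathcal{F}_M$ of ``formalizing morphisms'', built from the finite-dimensional generator and relation spaces of $M$ and $H^*(M)$; formality over a field $\mathbb{F} \supseteq \mathbb{L}$ is precisely non-emptiness of $\mathcal{F}_M(\mathbb{F}) = \mathcal{F}_{M\otimes\mathbb{F}}(\mathbb{F})$. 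A variety defined over $\mathbb{L}$ with an $\mathbb{L}$-point after base change to $\mathbb{K}$ already had an $\mathbb{L}$-point, provided it is nonempty as a scheme and $\mathbb{L}$ is infinite (automatic in characteristic zero) — more precisely, a nonempty $\mathbb{L}$-scheme of finite type which acquires a rational point over some extension has an $\mathbb{L}$-point by a Noether-style argument when $\mathbb{L}$ is infinite, or one simply invokes that $\mathcal{F}_M \neq \emptyset$ over $\overline{\mathbb{L}}$ together with a Galois-descent/specialization argument. The technical care needed is to set up $\mathcal{F}_M$ correctly as an honest (possibly pro-)algebraic $\mathbb{L}$-variety and to handle the homotopy relation and the pro-system of stages uniformly.

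The main obstacle, in either approach, is precisely this bookkeeping: making the obstruction groups (resp.\ the formalizing-morphism variety) genuinely defined over $\mathbb{L}$ and genuinely finite-dimensional (resp.\ of finite type) at each stage, which is where the \emph{finite type cohomology} hypothesis is essential — without it one cannot guarantee the relevant $\mathrm{Hom}$-spaces are finite-dimensional, and extension of scalars no longer detects non-vanishing. I expect the cleanest write-up to proceed by induction on the stages of the bigraded minimal model of $H^*(M)$, carrying the inductive hypothesis ``the truncated formalizing morphism exists over $\mathbb{L}$'' and using faithful flatness of $\mathbb{K}/\mathbb{L}$ at each step to descend the existence of the next extension from $\mathbb{K}$ to $\mathbb{L}$; the characteristic-zero hypothesis enters to guarantee the existence and good behaviour of minimal Sullivan models and to ensure $\mathbb{L}$ is infinite.
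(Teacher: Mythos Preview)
The paper gives no self-contained proof here: it simply cites Halperin--Stasheff \cite[Corollary 6.9]{HJ79} and points forward to Theorem~\ref{descent2}, the Kaledin-class version. Your first approach --- obstruction theory on the bigraded/filtered minimal model, with finite-dimensional obstruction groups that commute with $-\otimes_{\mathbb{L}}\mathbb{K}$, descending vanishing by faithful flatness --- is exactly the Halperin--Stasheff argument the paper is citing, so on that route you match the paper. The Kaledin-class formulation of Theorem~\ref{descent2} is a clean repackaging of the same mechanism: gauge formality is the vanishing of a single class in a cohomology group compatible with faithfully flat base change.

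Your second route, however, has a genuine gap. The claim that ``a nonempty $\mathbb{L}$-scheme of finite type which acquires a rational point over some extension has an $\mathbb{L}$-point when $\mathbb{L}$ is infinite'' is simply false: the conic $x^2+y^2+1=0$ over $\mathbb{Q}$ has $\mathbb{C}$-points but no $\mathbb{Q}$- or $\mathbb{R}$-points. What makes Sullivan-style descent work is not a general principle about rational points but the specific structure of this situation: at each inductive stage the set of extensions of a given partial formalizing morphism is either empty or a torsor under an additive (vector) group, hence an affine space over $\mathbb{L}$, and the obstruction to non-emptiness is a \emph{linear} class in a finite-dimensional $\mathbb{L}$-vector space. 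Affine spaces have rational points over any field; arbitrary varieties do not. That linearity is precisely what your first approach exploits and precisely what is missing from your second approach as written. If you want to salvage the variety argument, you must invoke this unipotent/affine structure explicitly (this is, in effect, what the paper does elsewhere when it argues that a torsor under a pro-unipotent group with a $\mathbb{C}$-point has a $\mathbb{Q}$-point, cf.\ the proof of Theorem~\ref{Splitting sur Q}).
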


\begin{proof}
This theorem is proved in \cite[Corollary 6.9]{HJ79}. See also Theorem \ref{descent2} in these notes for a somewhat different point of view.
\end{proof}

\begin{remark}
Two $\mathrm{CDGA}$s over $\mathbb{L}$ may become quasi-isomorphic after extending the scalars to $\mathbb{K}$ without being quasi-isomorphic over $\mathbb{K}$. As a simple example, one can take the following two commutative algebras over $\mathbb{R}$~, \[A=\mathbb{C} \quad \mbox{and} \quad B=\mathbb{R}\times\mathbb{R} \ .\] These two algebras are not isomorphic though we have isomorphisms
\[A\otimes_{\mathbb{R}}\mathbb{C}\cong \mathbb{C}\times\mathbb{C}\cong B\otimes_{\mathbb{R}}\mathbb{C}.\]
\end{remark}

Using Theorem \ref{descent} and Corollary \ref{coro : DGMS}, one deduces the following result.

\begin{corollary}\label{formality over Q}
Let $M$ a compact Kähler manifold. The rational homotopy type of $M$ is determined by its cohomology as commutative graded algebras. In particular, if $M$ is simply connected, there is an isomorphism \[\pi_*(M) \otimes_{\mathbb{Z}} \mathbb{Q} \cong \pi_*\langle Q(H^*(M, \mathbb{Q})) \rangle \ .\]
\end{corollary}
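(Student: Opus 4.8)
The plan is to combine the real formality of compact Kähler manifolds (Corollary \ref{coro : DGMS}) with the formality descent theorem (Theorem \ref{descent}), so the proof is essentially a one-line deduction once the hypotheses of descent are verified. Concretely, let $M$ be a compact Kähler manifold and set $A \coloneqq \Omega^*_{PL}(M)$, a CDGA over $\mathbb{Q}$. We want to show $A$ is formal over $\mathbb{Q}$, since by the fundamental theorem of rational homotopy theory this is exactly the statement that the rational homotopy type of $M$ is determined by $H^*(M;\mathbb{Q})$ as a commutative graded algebra, and yields the displayed isomorphism on rational homotopy groups in the simply connected case via the Sullivan realization functor $\langle Q(-)\rangle$.

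First I would record that $A \otimes_{\mathbb{Q}} \mathbb{R} \simeq \Omega^*_{PL}(M) \otimes_{\mathbb{Q}} \mathbb{R}$ is formal over $\mathbb{R}$: this is precisely the content of Corollary \ref{coro : DGMS}, which asserts that the real CDGA $\Omega^*_{PL}(M)\otimes_{\mathbb{Q}}\mathbb{R}$ has the homotopy type of its cohomology algebra $H^*(M;\mathbb{R}) \cong H^*(M;\mathbb{Q})\otimes_{\mathbb{Q}}\mathbb{R}$. Next I would check that the hypotheses of Theorem \ref{descent} are met with $\mathbb{L} = \mathbb{Q}$ and $\mathbb{K} = \mathbb{R}$: both are characteristic zero fields with $\mathbb{Q} \subset \mathbb{R}$, and $A$ has finite type cohomology because $M$ is a compact manifold, so each $H^i(M;\mathbb{Q})$ is finite dimensional and vanishes for $i$ large. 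Then Theorem \ref{descent} applies: since $A \otimes_{\mathbb{Q}}\mathbb{R}$ is formal, $A$ is formal over $\mathbb{Q}$.

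Finally I would translate this back into the topological statement. Formality of $A = \Omega^*_{PL}(M)$ over $\mathbb{Q}$ means, by Definition \ref{definition : formality of spaces}, that $M$ is a formal space; hence its rational homotopy type is reconstructed from $H^*(M;\mathbb{Q})$ as a commutative graded algebra. In the simply connected case one applies the Sullivan realization functor to a cofibrant replacement $Q(H^*(M,\mathbb{Q}))$ and uses the computation of rational homotopy groups recalled in Section \ref{Key}, namely $\pi_i(M)\otimes_{\mathbb{Z}}\mathbb{Q}\cong \pi_i\langle Q(H^*(M,\mathbb{Q}))\rangle$, exactly as in the real case of Corollary \ref{coro : DGMS}.

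There is essentially no obstacle here beyond bookkeeping: all the real work is in Corollary \ref{coro : DGMS} (the $dd^c$-lemma and the comparison $\mathcal{E}^* \simeq \Omega^*_{PL}(-)\otimes_{\mathbb{Q}}\mathbb{R}$) and in the descent theorem \ref{descent}, both of which we are entitled to assume. The only point requiring a word of care is the finite type hypothesis in Theorem \ref{descent}, which is why compactness of $M$ is used; without it one could not appeal to descent directly. If one wanted a self-contained argument avoiding Hironaka-type input, the alternative route would be to reprove descent via the gauge-theoretic obstruction theory alluded to in Section \ref{7} (Theorem \ref{descent2}), but for the present corollary citing Theorem \ref{descent} is the most economical path.
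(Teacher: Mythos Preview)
Your proposal is correct and follows exactly the route the paper takes: the corollary is stated immediately after the sentence ``Using Theorem \ref{descent} and Corollary \ref{coro : DGMS}, one deduces the following result,'' and no further argument is given. Your write-up simply spells out this deduction in detail, including the verification of the finite type hypothesis via compactness, so there is nothing to add or correct.
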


\begin{remark}
One issue with this corollary is that we have lost functoriality in the process of descending formality. For instance, if $G$ is a discrete group acting on a compact K\"ahler manifold, it is not at all obvious that the isomorphism
\[\pi_*(M) \otimes_{\mathbb{Z}} \mathbb{Q} \cong \pi_*\langle Q(H^*(M, \mathbb{Q})) \rangle\]
is an isomorphism of representations of $G$. It is however be an isomorphism of $G$-representations after extending the scalars to $\mathbb{R}$ thanks to the functoriality of the Deligne--Griffiths--Morgan--Sullivan Theorem. As we shall see later in these notes, functoriality does hold over $\mathbb{Q}$ but requires a different argument (see Theorem \ref{sullivan}).
\end{remark}

\subsection{The covariant version}

In the paper \cite{GNPR05}, the authors elaborate on the method of \cite{DGMS75} and prove that operads (as well as cyclic operads, modular operads, etc.) internal to the category of compact Kähler manifolds are formal. They establish the following covariant version to Theorem \ref{contravariant}. 

\begin{theorem}[{\cite[Corollary~3.2.3]{GNPR05}}] \label{covariant}
	The functor of singular chains $$C_*(-;\mathbb{R}) : \K \to \ch_*(\mathbb{R})$$ is a formal lax symmetric monoidal functor. 
\end{theorem}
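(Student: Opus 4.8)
The plan is to deduce the covariant statement from the already-established contravariant Theorem \ref{contravariant} by a duality argument, exploiting the fact that on a compact manifold of dimension $n$, singular chains and differential forms are related by Poincaré duality, and that $C_*(-;\mathbb{R})$ is (over a field) quasi-isomorphic to the linear dual of a cochain-level model. More precisely, I would first observe that for each compact Kähler manifold $M$ the real singular chain complex $C_*(M;\mathbb{R})$ is naturally quasi-isomorphic to the continuous/topological dual $\mathcal{E}^*(M)^\vee$ of the de Rham complex, via integration of forms against chains; this pairing is lax symmetric monoidal in the appropriate sense because the de Rham functor is lax symmetric monoidal on $\K^{op}$ and dualization turns a lax symmetric monoidal functor on $\mathcal{C}^{op}$ into a lax symmetric monoidal functor on $\mathcal{C}$ (the Künneth/Eilenberg--Zilber maps and the integration pairing are compatible). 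One must be slightly careful here: $\mathcal{E}^*(M)$ is infinite-dimensional, so ``dual'' must be interpreted with a topology, or one should instead work with a small functorial model — for instance singular cochains $C^*(M;\mathbb{R})$, noting $C^*$ is formal because it is quasi-isomorphic to $\mathcal{E}^*$ on smooth manifolds (via \cite[Corollary 9.9]{GM} and integration), and then $C_*(M;\mathbb{R}) \simeq C^*(M;\mathbb{R})^\vee$ by the universal coefficient theorem over $\mathbb{R}$.

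The key steps, in order, would be: (1) upgrade Theorem \ref{contravariant} to the statement that the singular cochain functor $C^*(-;\mathbb{R}) : \K^{op} \to \ch^*(\mathbb{R})$ is a formal lax symmetric monoidal functor, by exhibiting a zig-zag of lax symmetric monoidal natural transformations between $C^*(-;\mathbb{R})$ and $\mathcal{E}^*(-)$ — this is essentially the classical de Rham comparison made functorial and monoidal, and the intermediate object is the complex of smooth singular cochains; (2) dualize the zig-zag: apply the contravariant functor $(-)^\vee = \Hom_{\mathbb{R}}(-,\mathbb{R})$ levelwise, which is exact over the field $\mathbb{R}$, hence preserves quasi-isomorphisms, and which converts a zig-zag of lax symmetric monoidal transformations of functors on $\K^{op}$ into a zig-zag of lax symmetric monoidal transformations of functors on $\K$; (3) identify the two ends: $(C^*(M;\mathbb{R}))^\vee$ is naturally quasi-isomorphic to $C_*(M;\mathbb{R})$ (universal coefficients / the double-dual comparison on chain complexes of finite-type-in-each-degree modules, together with the fact that homology of $M$ compact is finite-dimensional), and $(H^*(M;\mathbb{R}))^\vee \cong H_*(M;\mathbb{R})$; (4) check that the lax monoidal structure on the dualized functor, transported through these identifications, agrees with the Eilenberg--Zilber lax monoidal structure on $C_*(-;\mathbb{R})$, and likewise that the induced map on homology is the Künneth map, so that the zig-zag is genuinely a formality zig-zag in the sense of Definition \ref{formal2}.

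I expect the main obstacle to be step (4) together with the functorial/monoidal bookkeeping in step (1): making the de Rham comparison simultaneously natural in $M$ and compatible with the shuffle/Künneth monoidal structures requires choosing the right model (smooth singular cochains, with the cup product and the integration map $\int : \mathcal{E}^*(M) \to C^*_{sm}(M;\mathbb{R})$ which is a lax monoidal quasi-isomorphism — this is where one invokes that $\int$ intertwines the wedge product with the cup product only up to a natural chain homotopy, à la the classical proof, so one may need an explicit monoidal, or $E_\infty$-refined, comparison and then truncate back to the strictly commutative world using characteristic zero). A cleaner alternative, which I would mention as the streamlined route, is to avoid cochains entirely: invoke that over $\mathbb{R}$ the linear dual of the lax symmetric monoidal functor $\mathcal{E}^*(-): \K^{op}\to \ch^*(\mathbb{R})$ — made sense of via the canonical pairing with singular chains, i.e. the functor $M \mapsto C_*(M;\mathbb{R})$ equipped with the transformation $C_*(M;\mathbb{R}) \to \mathcal{E}^*(M)^\vee$ — directly inherits formality from Theorem \ref{contravariant}, since dualizing (in the derived sense, over a field) is an exact contravariant operation sending lax symmetric monoidal functors on $\mathcal{C}^{op}$ to lax symmetric monoidal functors on $\mathcal{C}$ and preserving the property of being weakly equivalent to the homology functor; the compactness of $M$ guarantees finite-dimensional cohomology so that no completion issues arise when identifying $H_*(M;\mathbb{R})$ with $(H^*(M;\mathbb{R}))^\vee$.
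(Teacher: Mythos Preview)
Your proposal contains a genuine gap at its core: the claim that linear dualization ``turns a lax symmetric monoidal functor on $\mathcal{C}^{op}$ into a lax symmetric monoidal functor on $\mathcal{C}$'' is false. If $F:\K^{op}\to\ch^*(\mathbb{R})$ is lax monoidal with structure maps $\kappa:F(M)\otimes F(N)\to F(M\times N)$, then dualizing $\kappa$ gives a map $F(M\times N)^\vee\to (F(M)\otimes F(N))^\vee$, while the canonical pairing gives $F(M)^\vee\otimes F(N)^\vee\to (F(M)\otimes F(N))^\vee$. These arrows point into the same object and do not compose to produce the required map $F(M)^\vee\otimes F(N)^\vee\to F(M\times N)^\vee$; dualization of a lax structure yields an \emph{oplax} one. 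Your step (2), and the ``cleaner alternative'' at the end, both rest on this incorrect variance claim, so the formality zig-zag you hope to obtain for $C_*(-;\mathbb{R})$ as a lax monoidal functor never materialises. (Relatedly, the cup product on $C^*$ is dual to the Alexander--Whitney \emph{oplax} structure on $C_*$, not to the Eilenberg--Zilber shuffle map, so your step (4) cannot succeed as written.)

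The paper avoids precisely this difficulty, and its argument is not a dualization of Theorem~\ref{contravariant}. It introduces the functor of compactly supported de Rham currents $\mathcal{E}'_*$ and constructs a lax monoidal structure on it \emph{directly}: given currents $S\in\mathcal{E}'_*(M)$ and $T\in\mathcal{E}'_*(N)$, the product current $\kappa(S\otimes T)\in\mathcal{E}'_*(M\times N)$ is specified by $\langle\kappa(S\otimes T),\pi_M^*\omega\wedge\pi_N^*\nu\rangle=\langle S,\omega\rangle\langle T,\nu\rangle$, and this determines a current because $\mathcal{E}^*(M)\otimes\mathcal{E}^*(N)$ has \emph{topologically dense} image in $\mathcal{E}^*(M\times N)$. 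This density argument is exactly what bridges the lax/oplax gap and has no purely algebraic analogue. The paper then (Theorem~\ref{faible}) compares $\mathcal{E}'_*$ with $C_*(-;\mathbb{R})$ through smooth singular chains and integration, and (Theorem~\ref{theo : DGMS}) reproves the $dd^c$-lemma for currents to obtain a formality zig-zag for $\mathcal{E}'_*$ itself --- it does not attempt to dualize the zig-zag from the contravariant theorem.
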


\begin{remark}
	The lax symmetric monoidal structure on the functor $C_*(-;\RR)$ comes from the shuffle product 
	\[ \delta_{X,Y} : C_*(X; \RR) \otimes  C_*(Y; \RR)  \to  C_*(X \times Y; \RR)\]
and the obvious unit morphism, see \cite[Theorem~5.2]{EilenbergMacLane53}. 
\end{remark}

Let $M$ be a differentiable manifold. Consider $\mathcal{E}^*(M)$, the complex of differential forms. We can make it into a locally convex topological vector space by giving it the topology of compact convergence for all derivatives of forms. We can then consider its topological dual, denoted $\mathcal{E}_*'(M)$ equipped with the strong topology. An element of $\mathcal{E}_*'(M)$ is called a de Rham current with compact support. We thus have a covariant functor \[\mathcal{E}_*' : \mathrm{Dif} \to \ch_*(\mathbb{R}) \ ,\] where $\mathrm{Dif}$ denotes the category of differentiable manifolds and smooth maps.  This functor has a symmetric monoidal structure inherited from the wedge product of differential forms. More precisely, given $S \in \mathcal{E}_*'(M)$ and $T \in \mathcal{E}_*'(N)$, we define $\kappa(S\otimes T)\in\mathcal{E}_*'(M\times N)$ by the formula
\[ \langle \kappa(S \otimes T), \pi_M^*(\omega) \wedge \pi^*_N(\nu) \rangle = \langle S, \omega \rangle \cdot \langle T, \nu \rangle\]
valid for all $\omega \in \mathcal{E}^*(M)$ and $\nu \in \mathcal{E}^*(N)$, where $\pi_M$ and $\pi_N$ denote the projections on $M$ and $N$ respectively. It turns out that this formula is sufficient for defining a current as the map
\[\begin{array}{ccl}
\mathcal{E}^*(M)\otimes\mathcal{E}^*(N)&\longrightarrow &\mathcal{E}^*(M\times N)\\(\omega,\nu)&\longmapsto& \pi_M^*(\omega) \wedge \pi^*_N(\nu)
\end{array}\] has dense image.

\begin{theorem}[{\cite[Proposition~2.4.1]{GNPR05}}]\label{faible} The two functors
\[C_* , \mathcal{E}_*' : \mathrm{Dif} \to \ch_*(\R)\]
	are weakly equivalent symmetric monoidal functors. 
\end{theorem}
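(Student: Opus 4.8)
The plan is to produce the zig-zag by passing through the auxiliary functor $C^{\mathrm{sm}}_*(-;\mathbb{R})\colon \mathrm{Dif}\to \ch_*(\mathbb{R})$ of \emph{smooth} singular chains, where $C^{\mathrm{sm}}_*(M;\mathbb{R})$ is the subcomplex of $C_*(M;\mathbb{R})$ generated by those singular simplices $\sigma\colon \Delta^n\to M$ that extend to a smooth map on a neighbourhood of $\Delta^n$. Since a finite product of smooth simplices is smooth and the prism triangulations entering the Eilenberg--Zilber shuffle map are explicit affine (hence smooth) simplices, the shuffle product restricts to $C^{\mathrm{sm}}_*$, so that $C^{\mathrm{sm}}_*(-;\mathbb{R})$ is a lax symmetric monoidal subfunctor of $C_*(-;\mathbb{R})$ and the inclusion $\iota\colon C^{\mathrm{sm}}_*(-;\mathbb{R})\hookrightarrow C_*(-;\mathbb{R})$ is a natural transformation of lax symmetric monoidal functors. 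The fact that $\iota(M)$ is a quasi-isomorphism for every $M$ is the classical statement that smooth singular chains compute singular homology, for which I would cite a standard reference.

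Next I would define the natural transformation $\mathcal{R}\colon C^{\mathrm{sm}}_*(-;\mathbb{R})\to \mathcal{E}_*'(-)$ sending a smooth simplex $\sigma\colon \Delta^n\to M$ to the current of integration along $\sigma$, namely the functional $\omega\mapsto \int_{\Delta^n}\sigma^*\omega$ on $\mathcal{E}^*(M)$. This functional is continuous for the $C^\infty$-topology and supported on the compact set $\sigma(\Delta^n)$, so it lies in $\mathcal{E}_*'(M)$; it is a chain map by Stokes' theorem, matching the boundary on chains with the distributional de Rham differential on currents up to the standard sign; and it is natural in $M$ because integration commutes with pullback, so that $\mathcal{R}$ intertwines $f_*$ on chains with the pushforward of currents $\langle f_*S,\omega\rangle=\langle S,f^*\omega\rangle$. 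The substantive point is that $\mathcal{R}$ is monoidal, i.e.\ $\mathcal{R}\circ\delta_{M,N}=\kappa_{M,N}\circ(\mathcal{R}\otimes\mathcal{R})$. Evaluating both sides on a form $\pi_M^*\omega\wedge\pi_N^*\nu$ with $\omega\in\mathcal{E}^p(M)$ and $\nu\in\mathcal{E}^q(N)$, and using that the signed shuffle decomposition of $\Delta^p\times\Delta^q$ into $(p+q)$-simplices is compatible with orientations, the left-hand side becomes $\int_{\Delta^p\times\Delta^q}(\sigma\times\tau)^*(\pi_M^*\omega\wedge\pi_N^*\nu)$, which by Fubini equals $\bigl(\int_{\Delta^p}\sigma^*\omega\bigr)\bigl(\int_{\Delta^q}\tau^*\nu\bigr)$; by the defining formula for $\kappa$ recalled before the theorem this is exactly the value of the right-hand side, and since forms of this special type span a dense subspace of $\mathcal{E}^*(M\times N)$ the two currents agree. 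Compatibility with the units is immediate, as $\mathcal{E}^*(\mathrm{pt})=\mathbb{R}$ and $\mathcal{R}$ on a point is the identity of $\mathbb{R}$ in degree $0$.

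It then remains to show that $\mathcal{R}(M)$ is a quasi-isomorphism for every manifold $M$, which is the homological form of de Rham's theorem: the compactly supported current complex $\mathcal{E}_*'(M)$ computes $H_*(M;\mathbb{R})$, and $\mathcal{R}$ realizes the de Rham pairing. One uniform way to see this is to observe that $M\mapsto H_*\bigl(C^{\mathrm{sm}}_*(M;\mathbb{R})\bigr)$ and $M\mapsto H_*\bigl(\mathcal{E}_*'(M)\bigr)$ are both homotopy invariant, send disjoint unions to direct sums, and satisfy a Mayer--Vietoris sequence --- for currents one dualizes the partition-of-unity short exact Mayer--Vietoris sequence of $\mathcal{E}^*$, using Hahn--Banach and the open mapping theorem for Fréchet spaces to keep the dual sequence exact --- while $\mathcal{R}$ is visibly an isomorphism over a point; a Mayer--Vietoris induction over a good cover then settles manifolds of finite type and a filtered-colimit argument the general case. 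Putting everything together, the zig-zag
\[ C_*(-;\mathbb{R}) \;\xleftarrow{\;\iota\;}\; C^{\mathrm{sm}}_*(-;\mathbb{R}) \;\xrightarrow{\;\mathcal{R}\;}\; \mathcal{E}_*'(-) \]
consists of natural transformations of lax symmetric monoidal functors that are pointwise quasi-isomorphisms, which is the assertion. I expect the genuinely delicate step to be the monoidality of $\mathcal{R}$: one must reconcile the combinatorics and signs of the Eilenberg--Zilber shuffle with the iterated-integral description of the product current; the remaining steps are either standard smoothing theory or the classical de Rham comparison.
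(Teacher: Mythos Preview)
Your proof is correct and follows essentially the same strategy as the paper: pass through smooth singular chains, use the inclusion $C^{\mathrm{sm}}_*\hookrightarrow C_*$ on one side and the integration-current map on the other, invoking Stokes for the chain map property and de Rham's theorem for the quasi-isomorphism. The paper cites \cite[Proposition~2.4.1]{GNPR05} and \cite{Bredon97} for the monoidality check and the quasi-isomorphism steps, whereas you spell out the Fubini/shuffle-triangulation argument and a Mayer--Vietoris induction explicitly, but the architecture is the same.
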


\begin{proof}
	The proof is dual to de Rham's theorem giving a quasi-isomorphism between the de Rham complex $\mathcal{E}^*$ and the functor of singular cochains. For every differentiable manifold $M$, let $C^{\infty}_*(M; \mathbb{Z})$ be the sub-complex of singular chains generated by the $\mathcal{C}^{\infty}$-maps. The shuffle product of $\mathcal{C}^{\infty}$-singular chains is also a $\mathcal{C}^{\infty}$-singular chain. We obtain a symmetric monoidal functor 
\[C^{\infty}_* : \mathrm{Dif} \to \ch_*(\R) \ . \] 
Let $M$ be a differentiable manifold and let $c : \Delta^p \to M$ be a $\mathcal{C}^{\infty}$-singular simplex. By Stokes theorem, integration along $c$ induces a morphism 
	\[C^{\infty}_*(M) \to \mathcal{E}_*'(M), \quad c \mapsto \int_c\]
	defined by $\int_c \omega \coloneqq \int_{\Delta^p} c^*(\omega)$, see \cite[Chapter V. 5]{Bredon97}. De Rham theorem's implies that integration induces an isomorphism in homology. Thus \[\int : C^{\infty}_* \to \mathcal{E}_*'\] is a weak equivalence between functors, see \cite[Chapter V. Theorem 9.1]{Bredon97}. To conclude that this is a weak monoidal equivalence, one needs to verify that integration is compatible with the monoidal structure. We refer the reader to \cite[Proposition~2.4.1]{GNPR05} for more details about this. Furthermore, the natural inclusion of $\mathcal{C}^{\infty}$-singular chains in the singular chains defines a symmetric monoidal natural transformation $C^{\infty}_* \to C_*$, since the structures of symmetric monoidal functors are both defined with the shuffle product. Finally, the inclusion induces an isomorphism in homology, see for example \cite[Page~291]{Bredon97}. By composing the two weak equivalences, we get that $C_*$ and $\mathcal{E}_*'$ are weakly equivalent symmetric monoidal functors. 
\end{proof}

\begin{remark}
There is a small mistake in \cite{GNPR05}. Instead of $\mathcal{E}_*'$, the authors use the functor of de Rham currents $\mathcal{D}_*'$ which is the topological dual of the functor of de Rham differential forms with compact support. The issue is that this functor is not quasi-isomorphic to singular chains but instead computes Borel-Moore homology of the manifold. However, the restriction of the two functors $\mathcal{E}_*'$ and $\mathcal{D}_*'$ to compact manifolds are isomorphic and \cite{GNPR05} only applies the previous to compact manifolds. 
\end{remark}

Since $C_*$ and $\mathcal{E}_*'$ are two weakly equivalent symmetric monoidal functors, Theorem \ref{covariant} is a consequence of the following.

\begin{theorem}\label{theo : DGMS}
	The functor of currents $\mathcal{E}_*' : \K \to \ch_*(\R)$ is a formal symmetric monoidal functor. 
\end{theorem}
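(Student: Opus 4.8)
The plan is to dualize the argument of Theorem \ref{contravariant}, where formality of $\mathcal{E}^*$ on $\K^{op}$ was deduced from the $dd^c$-lemma via the zig-zag $\mathcal{E}^* \leftarrow {}^c\mathcal{E}^* \to H^*_{d^c}$. First I would observe that the functor $\mathcal{E}_*'$ assigns to a compact Kähler manifold $M$ the topological dual of $\mathcal{E}^*(M)$, which is itself a dg coalgebra (or, more to the point, a module over which $\mathcal{E}^*$ acts); the symmetric monoidal structure on $\mathcal{E}_*'$ is dual to the one on $\mathcal{E}^*$. The strategy is to apply the continuous linear dual functor $(-)'$ to the entire natural zig-zag of Theorem \ref{contravariant}. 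Since $\mathcal{E}^*(M)$, ${}^c\mathcal{E}^*(M)$ and $H^*_{d^c}(M)$ are all Fréchet (or at least topological vector) spaces, and the maps $i$, $\pi$ are continuous, dualizing produces a natural zig-zag
\[\mathcal{E}_*'(M) \overset{i'}{\longrightarrow} ({}^c\mathcal{E}^*(M))' \overset{\pi'}{\longleftarrow} (H^*_{d^c}(M))'\]
of chain complexes, functorial and symmetric monoidal in $M$.

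Next I would check that both maps are quasi-isomorphisms on homology. The key input is that continuous linear duality of Fréchet spaces (or nuclear Fréchet spaces, which these are) is exact, so it commutes with taking homology of the relevant complexes; hence $H_*(\mathcal{E}_*'(M)) \cong (H^*(\mathcal{E}^*(M)))'$ and similarly for the other two complexes, and the maps induced by $i'$, $\pi'$ on homology are the duals of the isomorphisms $i_*$, $\pi_*$ established in the proof of Theorem \ref{contravariant}. Then I would identify $(H^*_{d^c}(M))'$ with the homology $H_*(M;\R)$ equipped with zero differential: indeed $H^*_{d^c}(M)$ has zero induced differential and is isomorphic to $H^*(\mathcal{E}^*(M)) \cong H^*(M;\R)$, so its continuous dual is $H_*(M;\R)$ with zero differential, which is precisely $H \circ \mathcal{E}_*'$ evaluated at $M$. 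The last point to verify is that $i'$ and $\pi'$ are morphisms of symmetric monoidal functors; this follows formally because $i$ and $\pi$ were, and because duality turns lax symmetric monoidal structures dual to oplax ones appropriately — concretely, one traces through the definition of $\kappa$ for $\mathcal{E}_*'$ given before the statement and checks compatibility.

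The main obstacle I anticipate is the functional-analytic bookkeeping: one must make sure that all the complexes in sight are objects of a well-behaved category of topological vector spaces (nuclear Fréchet spaces and their strong duals), that the maps $i, \pi$ are morphisms there, that duality is exact on the sequences at hand, and that the strong dual of a completed projective tensor product is what one needs for the monoidal compatibility. A cleaner alternative — which I would fall back on if the topological subtleties become unwieldy — is to avoid duality altogether and instead argue directly: exhibit the subcomplex ${}^c\mathcal{E}_*'(M) \subset \mathcal{E}_*'(M)$ of currents annihilating $d^c$-exact forms, together with a quotient computing $H_*(M)$, mirroring the $dd^c$-lemma argument on currents (currents satisfy the same $dd^c$-lemma, being a module over the de Rham complex). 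Either way the genuinely new content over Theorem \ref{contravariant} is minimal; the work is entirely in setting up the correct category and checking exactness of duality, after which formality of $\mathcal{E}_*'$ — and hence, via Theorem \ref{faible}, of $C_*(-;\R)$ — drops out.
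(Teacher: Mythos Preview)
Your primary approach---dualizing the zig-zag $\mathcal{E}^* \leftarrow {}^c\mathcal{E}^* \to H^*_{d^c}$---has a genuine gap at the monoidal step. Continuous duality turns a lax monoidal structure into an \emph{oplax} one, so from the multiplication ${}^c\mathcal{E}^*(M)\otimes{}^c\mathcal{E}^*(N)\to{}^c\mathcal{E}^*(M\times N)$ you obtain a map $({}^c\mathcal{E}^*(M\times N))'\to ({}^c\mathcal{E}^*(M)\otimes{}^c\mathcal{E}^*(N))'$, which points the wrong way and lands in the wrong target. The lax structure $\kappa$ on $\mathcal{E}_*'$ given just before the statement is \emph{not} obtained by naive duality: it relies on the fact that $\mathcal{E}^*(M)\otimes\mathcal{E}^*(N)\to\mathcal{E}^*(M\times N)$ has dense image, which lets one \emph{define} $\kappa(S\otimes T)$ by prescribing its values on decomposable forms. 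To run your argument you would need the analogous density for $d^c$-closed forms, namely that ${}^c\mathcal{E}^*(M)\otimes{}^c\mathcal{E}^*(N)\to{}^c\mathcal{E}^*(M\times N)$ has dense image, and you would need the resulting lax structure to make $i'$ and $\pi'$ monoidal. None of this is addressed, and the density in particular is not obvious.

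Your fallback is exactly what the paper does, and it avoids all of this. The operators $d$ and $d^c$ transpose to operators on currents that satisfy the same K\"ahler identities, hence the $dd^c$-lemma holds verbatim for $\mathcal{E}_*'(M)$. One then forms the subcomplex ${}^c\mathcal{E}_*'(M)$ of $d^c$-closed currents and the quotient $H_*^{d^c}(M)={}^c\mathcal{E}_*'(M)/d^c\mathcal{E}_*'(M)$, and runs the argument of Theorem~\ref{contravariant} word for word to get the zig-zag
\[
(\mathcal{E}_*'(M),d)\ \xleftarrow{\ i\ }\ ({}^c\mathcal{E}_*'(M),d)\ \xrightarrow{\ \pi\ }\ (H_*^{d^c}(M),0).
\]
Since $d^c$ is a (signed) derivation for the lax product $\kappa$, the subfunctor and quotient inherit lax monoidal structures and $i,\pi$ are monoidal transformations---no duality gymnastics required. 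So drop the primary plan and promote the fallback; that is the paper's proof.
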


\begin{proof}
	The proof is essentially the same than the one of Theorem \ref{contravariant}. One can prove that the Kähler identities between the operators $d,d^c, \Delta, \dots $ of the de Rham complex of differential forms are also satisfied by the corresponding dual operators on the de Rham complex of currents. Thus, we have a similar version of the $dd^c$-lemma that holds for the complex $\mathcal{E}_*'$. Let $M$ be a compact Kähler manifold. Let $^c\mathcal{E}'_*(M)$ be the subcomplex of $\mathcal{E}'_*(M)$ defined by the $d^c$-closed currents and let us denote 
	\[H_*^{d^c}(M) \coloneqq ^c\mathcal{E}'_*(M)/ d^c(\mathcal{E}'_*(M)) \ .\]
	As in the proof of Theorem \ref{contravariant}, we obtain a diagram
	\[(\mathcal{E}'_*(M),d) \overset{i}{\longleftarrow} (^c\mathcal{E}'_*(M),d) \overset{\pi}{\longrightarrow} (H_*^{d^c}(M),d)\ ,\] where $i$ corresponds to the inclusion and $\pi$ to the quotient. Both maps are weak equivalences and the differential induced by $d$ on the quotient is zero (using $dd^c$-lemma). Since the morphisms $i$ and $\pi$ are morphisms of lax monoidal functors, we conclude that $\mathcal{E}'_*$ is formal.   
\end{proof}

\begin{corollary}
	If $\mathcal{O}$ is an operad in $\K$ then $C_*(\mathcal{O}, \mathbb{R})$ is formal. 
\end{corollary}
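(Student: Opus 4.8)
The plan is to deduce this corollary directly from Theorem~\ref{covariant} by invoking the general transport principle recorded in Proposition~\ref{ima}. First I would observe that Theorem~\ref{covariant} provides a formal lax symmetric monoidal functor $C_*(-;\mathbb{R}) : \K \to \ch_*(\mathbb{R})$; this is exactly the input required by Proposition~\ref{ima}, which states that a formal symmetric monoidal functor sends $\P$-algebras (in particular operads, regarded as algebras over the colored operad that encodes operads) to formal $\P$-algebras. Hence, given an operad $\mathcal{O}$ internal to $\K$, the object $C_*(\mathcal{O};\mathbb{R})$ inherits the structure of an operad in chain complexes over $\mathbb{R}$ via the lax monoidal structure (the shuffle product), and this operad is formal.

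The one point that deserves a sentence of justification is that ``operad internal to a symmetric monoidal category'' really is an instance of the algebras-over-a-colored-operad formalism to which Proposition~\ref{ima} applies: as recalled in the Example attributed to \cite{vdL03} and \cite{chuhaugseng}, there is an $\mathbb{N}$-colored operad $\mathcal{O}$ (in sets) whose algebras in $\mathcal{C}$ are precisely the symmetric operads in $\mathcal{C}$. Applying a lax symmetric monoidal functor $F$ to an algebra over a set-coloured operad yields an algebra over the same coloured operad, and a symmetric monoidal natural transformation of such functors yields a morphism of such algebras; this is the content of the proof of Proposition~\ref{ima} and works verbatim in the coloured setting. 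Evaluating the formality zig-zag of Theorem~\ref{covariant} on $\mathcal{O} \in \K$ therefore produces a zig-zag of quasi-isomorphisms of operads in $\ch_*(\mathbb{R})$ connecting $C_*(\mathcal{O};\mathbb{R})$ to $H_*(C_*(\mathcal{O};\mathbb{R})) = H_*(\mathcal{O};\mathbb{R})$ with its induced operad structure.

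There is essentially no obstacle here: the mathematical work has already been done in Theorem~\ref{covariant} (via Theorem~\ref{theo : DGMS} and Theorem~\ref{faible}) and in Proposition~\ref{ima}. The only thing to be careful about is bookkeeping: one should note that the homology of a formal operad is its homology operad, so that the conclusion ``$C_*(\mathcal{O};\mathbb{R})$ is formal'' means it is connected by a zig-zag of operad quasi-isomorphisms to $H_*(\mathcal{O};\mathbb{R})$, and that the same argument applies mutatis mutandis to cyclic operads, modular operads, props, and any other structure governed by a set-coloured operad, recovering the full strength of \cite{GNPR05}.
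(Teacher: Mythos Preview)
Your proposal is correct and takes exactly the same approach as the paper's own proof, which simply reads: ``This follows directly from Theorem~\ref{covariant} and Proposition~\ref{ima}.'' Your additional paragraph spelling out why operads are instances of algebras over a set-coloured operad (so that Proposition~\ref{ima} applies) is a welcome elaboration, but the underlying argument is identical.
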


\begin{proof}
	This follows directly from Theorem \ref{covariant} and Proposition \ref{ima}. 
\end{proof}

\begin{example}[Moduli spaces of stable algebraic curves of genus 0]\label{moduli}
Let \[\mathcal{O} = \{\overline{\mathcal{M}}_{0,l}\}_{l}\]  be the (cyclic) operad of moduli spaces of stable algebraic curves of genus 0, defined as follows. We denote by $\mathcal{M}_{0,l}$ the moduli space of $\ell$-tuples, $(x_1, \dots, x_{\ell})$, of distinct points of the complex projective line $\Co \mathbb{P}^1$ modulo projective automorphisms; that is the transformations of the form $$\Co \mathbb{P}^1 \to \Co \mathbb{P}^1, \quad [\xi_1, \xi_2] \mapsto [a\xi_1 + b \xi_2, c \xi_1 + d \xi_3] $$ for $a,b,c,d \in \Co$ such that $ad - bc \neq 0$. The space $\overline{\mathcal{M}}_{0,l}$, originally defined in \cite{Deligne1969}, corresponds to a certain compactification of $\mathcal{M}_{0,l}$ and is the moduli space of stable algebraic curves of genus $0$ with $\ell$-marked points. Briefly, a stable curve is a curve that can have nodal singularities but that has a finite group of automorphisms (this imposes that each component of the curve has at least $3$ points that are either nodal points or marked points). The operations of gluing two stable curves along marked points turns the collection of spaces $\overline{\mathcal{M}}_{0,l}$ into a cyclic operad $\mathcal{O}$ in the category of smooth projective varieties, so in particular in the category of compact K\"ahler manifolds. Using the previous theorem, one can conclude that $C_*(\mathcal{O}, \mathbb{R})$ is formal.
\end{example}

\begin{remark}
The above discussion generalizes to the modular operad obtained by taking moduli spaces of stable curves of all genera. The only issue is that the higher genus moduli spaces are not compact K\"ahler manifolds anymore. However, they are Deligne-Mumford stacks and the technology that we have just explained extends to this context. In \cite{GNPR05}, the authors are able to prove that this modular operad is formal.
\end{remark}

\section{\textcolor{bordeau}{Purity implies formality}}

The aim of this section is to give an equivalent characterization of formality of symmetric monoidal functors in terms of weight decompositions. We are going to prove that if a certain condition called \emph{purity} is satisfied, then formality is guaranteed.

\subsection{An equivalent definition of formality}

Let $\mathbf{k}$ be a field. We denote by $\mathrm{grVect}$ the category of graded vector spaces over $\mathbf{k}$. This abelian category inherits a symmetric monoidal structure from that of $\mathbf{k}$-vector spaces (the symmetry isomorphism does not involve any sign).

Every chain complex in $\mathrm{grVect}$ has two gradings. One comes from the underlying category $\mathrm{grVect}$ and is called ``weight''. We will denote it with a superscript. The other one corresponds to the homological grading. We will keep the term ``degree'' for this grading denoted with a subscript. The category $\ch_*(\mathrm{grVect})$ also inherits a symmetric monoidal structure given by
\[(C\otimes D)_m^p=\bigoplus_{n,q\in\mathbb{Z}}C_n^q\otimes D_{m-n}^{p-q}\]
The symmetry isomorphism involves the usual Koszul sign for the homological degree but not for the weight.

\begin{definition}
An object $V$ of $\ch_*(\mathrm{grVect})$ has \emph{pure homology} if
\[H_n(V)^p = 0 \quad \mbox{for all } p \neq n.\]
Denote by $\ch_*(\mathrm{grVect})^{pure}$ the full subcategory of $\ch_*(\mathrm{grVect})$ consisting of chain complexes with pure homology.
\end{definition}

The following proposition is straightforward exercise of linear algebra. A proof can be found in \cite[Proposition 2.7]{CH20}. Associated to Proposition \ref{composition}, it is the basis for many formality results that we will establish.

\begin{proposition}\label{forget}
The forgetful functor defined by forgetting the weight
\[U : \ch_*(grVect)^{pure} \to \ch_*(\mathbf{k})\]
is formal as lax symmetric monoidal functor.  
\end{proposition}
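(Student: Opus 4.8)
The plan is to construct an explicit zig-zag of lax symmetric monoidal natural transformations between $U$ and $H \circ U$ through an intermediate functor that picks out, in each weight-homological bidegree, the subcomplex of cycles that matters. For a chain complex $V$ in $\ch_*(\mathrm{grVect})$, write $Z(V) \subset V$ for the graded subcomplex defined in bidegree $(n,p)$ by $Z(V)_n^p = \ker(d \colon V_n^p \to V_{n-1}^p)$ when $p = n$, and by $Z(V)_n^p = V_n^p$ otherwise. The point of the purity hypothesis is that this is closed under the differential: if $p \neq n$ then the differential out of $V_n^p$ lands in $V_{n-1}^p$, which is contained in $Z(V)$ (since $p \neq n-1$ unless $n-1 = p$, i.e. $n = p+1 \neq p$, so still not the diagonal — one checks the only constraint is the diagonal one, where we took kernels). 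So $Z(V)$ is a well-defined subcomplex, functorial in $V$, and the inclusion $Z(V) \hookrightarrow V$ is a weight-respecting chain map.

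The first step is to check that the inclusion $\iota \colon Z(V) \hookrightarrow V$ is a quasi-isomorphism when $V$ has pure homology. In homological degree $n$, the inclusion is an isomorphism on everything except the diagonal piece; and on the diagonal piece, purity forces $H_n(V)^p = 0$ for $p \neq n$, so all the homology of $V$ in degree $n$ is concentrated in weight $n$, where $Z(V)_n^n$ is the cycles and the boundaries coming in also lie in $Z(V)_{n+1}^n = V_{n+1}^n$ (off-diagonal, hence untouched); a short diagram chase shows $H_n(\iota)$ is an isomorphism. The second step is to construct a weight-respecting chain map $\pi \colon Z(V) \to H(V)$ realizing the homology. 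Since the differential on $Z(V)$ restricted to the diagonal is zero (its image would have to be off-diagonal, but $Z(V)$ is all of $V$ off-diagonal and the differential preserves weight, so... more carefully: $d$ on $Z(V)_n^n$ lands in $Z(V)_{n-1}^n$; but $H_{n-1}(V)^n = 0$ by purity while everything in $Z(V)_{n-1}^n = V_{n-1}^n$ is automatically a cycle of $V$, forcing it to be a boundary — one must be slightly careful here, and this is where I expect the real bookkeeping to live). Granting that $Z(V)$ splits as a direct sum of its off-diagonal acyclic part and its diagonal part, which has zero differential and computes $H(V)$, one gets a canonical projection $\pi \colon Z(V) \twoheadrightarrow H(V)$, landing in $H(V)$ viewed as a complex with zero differential and weight equal to degree. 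Composing with the forgetful functor $U$ gives the zig-zag
\[
U(V) \; \overset{U(\iota)}{\longleftarrow} \; U(Z(V)) \; \overset{U(\pi)}{\longrightarrow} \; U(H(V)) = H(U(V)),
\]
with both maps quasi-isomorphisms (the right one because after forgetting weight, $Z(V)$ still decomposes as acyclic $\oplus$ $H(V)$).

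The third step, which is really the crux of \emph{symmetric monoidal} formality rather than mere levelwise formality, is to verify that $V \mapsto Z(V)$ is a lax symmetric monoidal functor and that $\iota$ and $\pi$ are monoidal natural transformations. For monoidality of $Z$, one checks that the composite $Z(V) \otimes Z(W) \to V \otimes W$ factors through $Z(V \otimes W)$: using the formula for the tensor product in $\ch_*(\mathrm{grVect})$, a summand $Z(V)_n^q \otimes Z(W)_{m-n}^{p-q}$ maps into $(V \otimes W)_m^p$, and one must see it lands in cycles whenever $p = m$; since $q \le n$ and $p - q \le m - n$ forces equality in both (as $p = m$), both tensor factors are genuine cycles, so their product is too — hence $Z$ is lax monoidal with structure map induced by $\kappa$ on $V \otimes W$. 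Symmetry is immediate since the symmetry isomorphism of $\ch_*(\mathrm{grVect})$ restricts. That $\iota$ is monoidal is tautological, and that $\pi$ is monoidal follows because the Künneth map for $H$ is, in this diagonal situation, literally the map induced by $Z(V) \otimes Z(W) \to Z(V \otimes W)$ on the diagonal summands. Finally, applying $U$ (which is strict symmetric monoidal, being a forgetful functor) preserves all of this, yielding a zig-zag of lax symmetric monoidal natural transformations and hence formality of $U$.

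I expect the main obstacle to be the bookkeeping in the second and third steps: precisely pinning down that $Z(V)$ decomposes as ``off-diagonal acyclic'' $\oplus$ ``diagonal with zero differential $= H(V)$'' requires combining the purity hypothesis with the shape of the differential on each bidegree, and getting the signs and degree shifts right in the monoidal structure maps is fiddly. None of it is deep — as the text says, it is ``a straightforward exercise of linear algebra'' — but the monoidality verification is the part that genuinely uses that we are working in $\ch_*(\mathrm{grVect})$ with the weight-insensitive symmetry, and it is worth doing carefully; a cleaner alternative would be to observe that $\ch_*(\mathrm{grVect})^{pure}$ is equivalent, as a symmetric monoidal category, to the category of (bounded-below) graded objects of $\mathrm{grVect}$ concentrated on the diagonal with an extra acyclic direction, but spelling that out is essentially the same work.
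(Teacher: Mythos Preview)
Your overall strategy---a zig-zag through an intermediate subfunctor that is both a subcomplex of $V$ and projects onto $H(V)$---is the right one, and is exactly what the reference \cite[Proposition~2.7]{CH20} does. But your specific definition of $Z(V)$ is wrong, and the first step fails.

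The problem is in bidegree $(n,p)=(n,n{+}1)$. You keep all of $V_n^{n+1}$ in $Z(V)$, but in degree $n{+}1$ you have replaced $V_{n+1}^{n+1}$ by $\ker(d)$. So the elements of $d(V_{n+1}^{n+1})\subset V_n^{n+1}$, which were boundaries in $V$, are no longer boundaries in $Z(V)$; they become spurious homology classes. Concretely, take $V$ concentrated in weight $1$ with $V_1^1=V_0^1=\mathbf{k}$ and $d$ the identity. This $V$ is acyclic, hence trivially pure. But $Z(V)_1^1=\ker(d)=0$ and $Z(V)_0^1=V_0^1=\mathbf{k}$, so $H_0(Z(V))=\mathbf{k}\neq 0=H_0(V)$, and $\iota$ is not a quasi-isomorphism.

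The fix is to use the honest weight-wise good truncation: define $(\tau V)_n^p=V_n^p$ if $n>p$, $=\ker(d)$ if $n=p$, and $=0$ if $n<p$. Then $\tau V\hookrightarrow V$ is a quasi-isomorphism (for each weight $p$ this is the standard truncation $\tau_{\geq p}$ of a complex with homology concentrated in degree $p$), and the projection $\tau V\to H(V)$ killing the off-diagonal part is also a quasi-isomorphism. In fact your own monoidality argument in step~3 already implicitly assumes this corrected definition: the inequalities ``$q\le n$ and $p-q\le m-n$'' you invoke have no justification for your $Z(V)$ (which is nonzero in all bidegrees), but they are exactly the constraint $n\ge p$ built into $\tau V$. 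With $\tau$ in place of $Z$, every step of your sketch goes through.
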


From this, one can derive the following formality criterion.

\begin{proposition}\label{factorisation}
A symmetric monoidal functor $F : \mathcal{C} \to \ch_*(\mathbf{k})$ is formal if and only if it is weakly equivalent to a functor $\tilde{F}$ that admits a factorisation,	
\begin{center}
\begin{tikzcd}
	&\ch_*(grVect)^{pure} \arrow[d, "U"] \\
	\mathcal{C} \arrow[r,"\tilde{F}"'] \arrow[ru, "G"] & \ch_*(\mathbf{k})
\end{tikzcd}
\end{center}

\end{proposition}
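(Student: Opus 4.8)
\textbf{Proof strategy for Proposition \ref{factorisation}.}
The plan is to prove the two implications separately, deriving the ``if'' direction from the already-established Propositions \ref{composition} and \ref{forget}, and proving the ``only if'' direction by explicitly constructing a lift $G$ through the homology functor.

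First, assume $F$ is weakly equivalent to a functor $\tilde F$ that factors as $\tilde F = U \circ G$ with $G : \mathcal{C} \to \ch_*(\mathrm{grVect})^{pure}$. Since weak equivalence of symmetric monoidal functors is compatible with formality (formality is detected on the homotopy category and the zig-zag of natural quasi-isomorphisms $F \simeq \tilde F$ can be concatenated with any formality zig-zag for $\tilde F$), it suffices to show $\tilde F$ is formal. By Proposition \ref{forget}, the forgetful functor $U : \ch_*(\mathrm{grVect})^{pure} \to \ch_*(\mathbf{k})$ is a formal symmetric monoidal functor. By Proposition \ref{composition}, the composite $U \circ G = \tilde F$ is therefore formal. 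This gives the ``if'' direction.

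For the ``only if'' direction, suppose $F : \mathcal{C} \to \ch_*(\mathbf{k})$ is formal, so there is a zig-zag of natural transformations of symmetric monoidal functors connecting $F$ to $H \circ F$ through quasi-isomorphisms. Set $\tilde F \coloneqq H \circ F$, which is weakly equivalent to $F$ by definition. Now I would define $G : \mathcal{C} \to \ch_*(\mathrm{grVect})^{pure}$ by equipping the chain complex $H(F(X))$ with a weight grading: declare $H_n(F(X))$ to sit in weight $p = n$, and take the zero differential. Since $F$ lands in $\ch_*(\mathbf{k})$, $H(F(X))$ is an honest graded vector space with zero differential, so placing $H_n(F(X))$ in weight $n$ defines an object of $\ch_*(\mathrm{grVect})$ whose homology is trivially pure (the complex equals its own homology). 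Functoriality in $X$ is inherited from functoriality of $H \circ F$. The lax symmetric monoidal structure on $G$ is induced by the Künneth morphism for $H$ composed with the structure maps of $F$: one checks that the Künneth map $H_a(F(X)) \otimes H_b(F(Y)) \to H_{a+b}(F(X) \otimes F(Y)) \to H_{a+b}(F(X \otimes Y))$ is weight-homogeneous of weight $a + b$, hence defines a morphism in $\ch_*(\mathrm{grVect})$, and the coherence axioms follow from those for $H \circ F$. Finally $U \circ G = H \circ F = \tilde F$ by construction, since forgetting the weight recovers the underlying complex.

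The only real subtlety is verifying that the lax symmetric monoidal structure maps of $G$ genuinely respect the weight grading and satisfy the coherence diagrams in $\ch_*(\mathrm{grVect})$ rather than merely in $\ch_*(\mathbf{k})$; but this is immediate from the way the symmetric monoidal structure on $\ch_*(\mathrm{grVect})$ was defined (the weight adds under tensor product and the symmetry introduces no extra sign on weights, exactly matching the behaviour of homological degree under Künneth), so this step is routine once the definitions are unwound. Everything else is formal manipulation of zig-zags.
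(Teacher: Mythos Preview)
Your proof is correct and follows essentially the same approach as the paper: for the ``if'' direction you invoke Propositions \ref{forget} and \ref{composition} exactly as the paper does, and for the ``only if'' direction you take $\tilde F = H \circ F$ and equip it with the weight grading given by homological degree, which is precisely the paper's ``$H\circ F$ splits as a direct sum $\oplus_n H_n\circ F$'' written out in more detail.
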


\begin{proof}
If $F$ is formal, it is weakly equivalent to the functor $H\circ F$ which splits as a direct sum $\oplus_n H_n\circ F$. This splitting satisfies the desired condition. Conversely, if $\tilde{F}$ admits such a factorization, then $\tilde{F}$ is formal using the previous proposition and Proposition \ref{composition}.
\end{proof}

The following Proposition gives a method for producing pure weight gradings.

\begin{proposition}\label{condformality} A symmetric monoidal functor $F : \mathcal{C} \to \ch_*(\mathbf{k})$ is formal if it is weakly equivalent to a functor $\tilde{F}$ such that 
	\begin{enumerate}
	\item $\tilde{F}$ is object-wise and degree-wise finite dimensional.
	\item $\tilde{F}$ has an endomorphism $\sigma$ which acts as multiplication by $\lambda^n$ on $H_n(\tilde{F}(c))$  for all $n \geqslant 0$ and $c\in Ob(\mathcal{C})$, where $\lambda \in \mathbf{k}$ is a unit of infinite order. 
	\end{enumerate} 
\end{proposition}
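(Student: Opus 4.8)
The plan is to use Proposition \ref{factorisation}: it suffices to build, out of the endomorphism $\sigma$, a factorisation of (a functor weakly equivalent to) $\tilde F$ through $\ch_*(\mathrm{grVect})^{pure}$. The idea is that $\sigma$ provides an eigenspace decomposition that plays the role of a weight grading, and assumption (1) guarantees the linear algebra goes through. First I would observe that since $\tilde F$ is weakly equivalent to $F$, we may replace $F$ by $\tilde F$ and reduce to showing $\tilde F$ itself is formal. The endomorphism $\sigma$ of the functor $\tilde F$ (a natural transformation $\sigma : \tilde F \Rightarrow \tilde F$ compatible with the symmetric monoidal structure) need not be semisimple on the chain level, but we only need a weight grading up to weak equivalence, so I would work homotopically.

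The key steps, in order: (a) For each object $c$, the chain complex $\tilde F(c)$ is degree-wise finite dimensional, so it is quasi-isomorphic to its homology $H_*(\tilde F(c))$; but to keep functoriality and monoidality I instead define, for each integer $p$, a subfunctor $\tilde F(c)^{(p)}$ by a spectral-projection construction: since $H_n(\tilde F(c))$ is the $\lambda^n$-eigenspace of $H_*(\sigma)$ and $\lambda$ has infinite multiplicative order, the eigenvalues $\lambda^n$ for distinct $n$ are distinct. (b) I would produce a new functor $\hat F$ with values in $\ch_*(\mathrm{grVect})$ by equipping $\tilde F(c)$ with the weight filtration/grading coming from the generalized eigenspaces of $\sigma$ — concretely, over $\mathbf k$ one may have to pass to an algebraic closure or simply use that the relevant eigenvalues $\lambda^n$ are units lying in $\mathbf k$, so the generalized $\lambda^n$-eigenspace $V_n(c) \subseteq \tilde F(c)$ is defined as $\ker(\sigma - \lambda^n)^N$ for $N \gg 0$ (finite dimensionality makes this stabilize), and these are preserved by $d$ and by the Künneth maps because $\sigma$ commutes with $d$ and is monoidal, with $\lambda^{m}\lambda^{n} = \lambda^{m+n}$ matching the grading convention on $\ch_*(\mathrm{grVect})$. (c) The decomposition $\tilde F(c) = \bigoplus_n V_n(c)$ — which holds because $\sigma$, being of "finite order type" with all eigenvalues among the $\lambda^n$, is a block sum of its generalized eigenspaces — gives the desired lift $\hat F : \mathcal C \to \ch_*(\mathrm{grVect})$, i.e. a factorisation $U \circ \hat F = \tilde F$. (d) Finally I would check $\hat F$ lands in $\ch_*(\mathrm{grVect})^{pure}$: by hypothesis $H_n(\tilde F(c))$ is the $\lambda^n$-eigenspace of $H_*(\sigma)$, and one checks $H_n(V_m(c)) = 0$ for $m \neq n$ since $H_*(\sigma)$ acts on $H_n(V_m(c))$ with all eigenvalues equal to $\lambda^m$, forcing this to be concentrated in homological degree $m$; that is exactly purity. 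Then Proposition \ref{factorisation} concludes formality.

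I expect the main obstacle to be step (b)–(c): ensuring that $\sigma$ genuinely splits $\tilde F(c)$ into a direct sum of its generalized eigenspaces indexed by $n \in \mathbb Z$, \emph{and} that this is functorial and monoidal. The subtlety is that $\sigma$ acting on the chain complex $\tilde F(c)$ has its homology eigenvalues controlled by hypothesis (2), but a priori $\sigma$ could have other eigenvalues on the chains that are not of the form $\lambda^n$; one must argue these extra eigenspaces are acyclic and can be discarded (or, better, that one may first replace $\tilde F$ by a weakly equivalent functor on which $\sigma$ is "pure", e.g. by taking the sum of generalized $\lambda^n$-eigenspaces over all $n$, which is a subfunctor that is degree-wise finite dimensional and whose inclusion is a quasi-isomorphism by the homology computation). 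Handling this cleanly while preserving the lax symmetric monoidal structure — in particular checking the Künneth/structure maps restrict to the chosen subfunctor — is the technical heart; everything else (distinctness of the $\lambda^n$, stabilization of generalized eigenspaces, compatibility of $\oplus$ with $\otimes$) is routine finite-dimensional linear algebra.
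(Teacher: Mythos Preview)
Your proposal is correct and follows essentially the same approach as the paper: take the subfunctor $\bigoplus_n \tilde F^n$ given by the generalized $\lambda^n$-eigenspaces of $\sigma$, observe that the inclusion into $\tilde F$ is a lax symmetric monoidal quasi-isomorphism (the other eigenspaces being acyclic by hypothesis~(2)), and note that this subfunctor carries a canonical weight grading factoring through $\ch_*(\mathrm{grVect})^{pure}$, so Proposition~\ref{factorisation} applies. The obstacle you identify in your last paragraph is exactly the point, and your proposed fix---passing to the sum of the $\lambda^n$-generalized eigenspaces rather than claiming $\tilde F(c)$ decomposes entirely---is precisely what the paper does.
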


\begin{proof}
Since the property of being formal is stable under weak equivalences, it is sufficient to check that $\tilde{F}$ is formal. This functor admits a sub-complex given by the direct sum $$ i : \bigoplus_{n \in \mathbb{Z}} \tilde{F}^n  \lhook\joinrel\xrightarrow{\quad  \quad } \tilde{F}$$ where each $\tilde{F}^n$, for $n \geqslant  0$, is the corresponding generalized eigenspace for the eigenvalue $\lambda^n$. Note that this is a sub-complex since the differential has to preserve generalized eigenspaces. The inclusion $i$ is a lax symmetric monoidal functor and a quasi-isomorphism since the other generalized eigenspaces will not contribute to homology because of condition (2). Then, the sub-functor $$G \coloneqq \bigoplus_{n \in \mathbb{Z}} \tilde{F}^n$$ has a canonical weight grading by construction, and admits a factorisation as in Proposition \ref{factorisation}. This implies that $G$ and hence $\tilde{F}$ and $F$ are formal.
\end{proof}

\subsection{The formality of the little disks operad}\label{little disks operad}  The original proof of formality of the little disks operad $\mathcal{D}_2$ is due to Kontsevich \cite{kon99} and Tamarkin \cite{Tam98} independently. In this section, we present another proof relying on Proposition \ref{condformality} which is due to Petersen \cite{Petersen14}.

Let $\mathbf{PaB}$ be the operad parenthesized braids. This is the operad in groupoids such that $\mathbf{PaB}(n)$ are parenthesized permutations of $\{1, \dots n\}$ and morphisms are braids on $n$ strands maintaining the same label at the start and at the end of each strand. We refer the reader to \cite[Section~2.3]{Damien} in this volume for more details. We also consider its $\QQ$-pro-unipotent completion, denoted $\widehat{\mathbf{PaB}}_{\QQ} $. The $\QQ$-pro-algebraic Grothendieck-Teichmüller group is defined as
\[\widehat{GT}_{\QQ} \coloneqq \Aut^+_{\ensuremath{\mathrm{OpGrpd}}}\left(\widehat{\mathbf{PaB}}_{\QQ}\right)\; .\]
where $\Aut^+$ denotes the group of automorphisms in the category of operads in groupoids that induce the identity map on the objects of the groupoid in each arity. The operad $\mathbf{PaB}$ is weakly equivalent to the operad of fundamental groupoids of $\mathcal{D}_2$. Since $\mathcal{D}_2(n)$ is a $K(\pi,1)$-space, we obtain a weak equivalence
\[\mathcal{D}_2\simeq \mathrm{B}(\mathbf{PaB})\]
where $\mathrm{B}$ denotes the classifying space functor. It turns out that the configuration spaces are also rational $K(\pi,1)$, see \cite{papadimarational}. This implies that the weak equivalence above induces an equivalence
\[(\mathcal{D}_2)_\QQ\simeq \mathrm{B}\widehat{\mathbf{PaB}}_{\QQ} \ .\]
From this, we obtain a quasi-isomorphism of differential graded operads
\[C_*(\mathcal{D}_2; \QQ) \cong C_*\left(\mathrm{B}\widehat{\mathbf{PaB}}_{\QQ}, \QQ\right) \ .\] The natural action of $\widehat{GT}_{\QQ}$ on $\widehat{\mathbf{PaB}}_{\QQ}$ extends to an action on the right-hand side. Recall that the homology of the little two-disks operad is the Gerstenhaber operad. The induced action of $ \widehat{GT}_{\QQ}$ on $H_*(\mathcal{D}_2)$ is given \[\begin{array}{ccl}
	\widehat{GT}_{\QQ} \times  H_n(\mathcal{D}_2) &\longrightarrow &H_n(\mathcal{D}_2) \\ (\sigma, a )&\longmapsto& \sigma \cdot a = \chi(\sigma)^n a
\end{array}\]
where $\chi : \widehat{GT}_{\QQ} \to \QQ ^\times$ is the cyclotomic character. The map $\chi$ is surjective by \cite[Section~5]{Drinfeld90}. Let $\alpha \in \mathbb{Q}^{\times}$ of infinite order. By surjectivity, there is a lift \[\tilde{\alpha} \in \widehat{GT}_{\QQ}\] such that $\chi(\tilde{\alpha}) = \alpha$. This lift induces an endomorphism of \[C_*\left(\mathrm{B}\widehat{\mathbf{PaB}}_{\QQ}, \QQ\right)\] which acts by multiplication by $\alpha^n$ on the homology group of degree $n$ for all $n \in \NN$. The dg operad at stake is not finite dimensional in each degree and arity. Let us consider
\[M \overset{\sim}{\longrightarrow} C_*\left(\mathrm{B}\widehat{\mathbf{PaB}}_{\QQ}, \QQ\right)\] to
be a minimal model as chain operad. This can easily be shown to be finite dimensional so it satisfies the condition (1) of the previous proposition. This minimal model inherits an endomorphism $\tilde{\alpha}$ satisfying the condition (2) of the proposition \ref{condformality} and this leads to the desired formality result.

\begin{remark}
A similar argument was used by Boavida de Brito and the second author in order to prove formality of the higher dimensional little disks operads in \cite{BdBH21}. The original formality argument is also due to Kontsevich with a detailed proof by Lambrechts-Voli\'c (see \cite{kon99,lambrechtsformality}).
\end{remark}

\begin{remark}
We expand a little bit about a subtle point of this section. The fact that configuration spaces of points in $\mathbb{R}^2$ are rational $K(\pi,1)$-spaces is non-trivial. It can happen that the rationalization of a $K(\pi,1)$ space has non-zero higher homotopy groups. As an example, we can consider the classifying space of the infinite linear group of the integers $BGL_{\infty}(\mathbb{Z})$. This is of course a $K(\pi,1)$-space but, the canonical map to Quillen's plus construction
\[BGL_{\infty}(\mathbb{Z})\to BGL_{\infty}(\mathbb{Z})^+\]
induces an isomorphism in homology with coefficient in $\mathbb{Z}$ and hence also with coefficients in $\mathbb{Q}$. It follows that the two spaces have the same rational homotopy type. But the higher homotopy groups of the rationalization of $BGL_{\infty}(\mathbb{Z})^+$ are, by definition, the rationalization of the higher $K$-groups of $\mathbb{Z}$. Those are known to be non-trivial in degree $4k+1$ for each $k>0$, by work of Borel, see \cite[Proposition 12.2]{borelstable}.
\end{remark}

\section{\textcolor{bordeau}{Interlude: Infinity categories}}

The sections \ref{Hodge} and \ref{torsion} uses the formality criteria of Proposition \ref{factorisation} in order to deduce formality. To do so, it will be convenient to use the extra flexibility provided by working with $\infty$-categories. The aim of this section is to introduce some concepts related to this subject.

\subsection{Classical infinity categories}

\begin{definition}[Nerve of a category] \label{nerve}
	The \emph{nerve} of a given category $\C$ is the the simplicial set $\N(\C)$ defined by $$\N(\C)_n = \mathrm{Fun}([n], \C) \ .$$ This definition extends to a fully faithful functor $\N : \mathrm{Cat} \to \mathrm{sSet} \ .$
\end{definition}

\begin{example} The nerve of the category $[n]$ is the standard $n$-simplex, i.e. \[\N([n]) \cong \Delta^n \ ,\] for all $n \geqslant  0$. Recall that the $k$-th horn $\Lambda^n_k \subset \partial \Delta^n$ for $0 \leqslant  k \leqslant  n$, is obtained from $\partial\Delta^n$ by removing the $k$-th face $\partial_k\Delta^n $ 
\end{example}

\begin{definition}[Infinity categories]
A simplicial set $\C$ is an \emph{$\infty$-category} if every inner horn \[\Lambda^n_k \to \C \ , \] for $0 < k <n$ can be extended to an $n$-simplex $\Delta^n \to \C$. 
\end{definition} 

\begin{example}
The nerve $\N(\C)$ of a category $\C$ is an infinity category. Through its nerve, any category can be seen as an $\infty$-category. From this example, we see that the $0$-simplices of an $\infty$-category should be thought of as objects and the $1$-simplices as morphisms. In addition, every extension of an inner horn $\Lambda^2_1 \to \N(\C)$ to a two simplex $\Delta^2 \to \N(\C)$ corresponds to the composition of morphisms. The higher dimensional inner horn extensions witness higher coherence in the associativity of compositions.
\end{example}

\begin{definition}[Functors between infinity categories]
	A \emph{functor} between two $\infty$-categories $\C$ and $\D$ is a morphism of simplicial sets $\C\to\D$. We denote by $\mathrm{Fun}(\C,\D)$ the simplicial set of functors from $\C$ to $\D$. This is an $\infty$-category and we shall refer to it as the $\infty$-category of functors from $\C$ to $\D$.
\end{definition}

\begin{definition}[Homotopy category]
	Let $\C$ an $\infty$-category. Let $f,g : x \to y$ be two morphisms in $\C$. They are said \emph{homotopic} if there is a $2$-simplex $\sigma : \Delta^2 \to \C$ with boundary $\partial \sigma $ corresponding to $(g,f, id_x)$.  The homotopy relation is an equivalence relation on the set of $1$-simplices with faces $x$ and $y$. Furthermore, there is an ordinary category $\mathrm{Ho}(\C)$, the \emph{homotopy category} of $\C$, with the same objects as $\C$ and morphisms the homotopy classes of morphisms in $\C$. We say that a $1$-simplex of an $\infty$-category is an \emph{equivalence} if it induces an isomorphism in the homotopy category.
\end{definition}

\begin{remark}
This homotopy category $\mathrm{Ho}(\C)$ does not capture all the homotopical information contained in $\C$. This can be enriched by constructing a \emph{mapping simplicial set} (usually called mapping space) denoted $\mathrm{map}_{\C}(x,y)$ for each pair of objects $x$ and $y$. These mapping spaces can be composed in a coherent homotopy associative way. The homotopy category of $\C$ is then simply the category obtained by applying $\pi_0$ to each of the mapping spaces. 
\end{remark}

\begin{definition}
A functor $f:\C\to \D$ between $\infty$-categories is called an \emph{equivalence} if it induces an equivalence of categories \[\mathrm{Ho}(\C)\to\mathrm{Ho}(\D)\] and weak equivalences of simplicial sets
\[\mathrm{map}_{\C}(x,y)\to \mathrm{map}_{\D}(f(x),f(y))\]
for any choice of $x$ and $y$ two objects of $\C$.
\end{definition}

\begin{definition}[{\cite[Definition~1.3.4.1]{Lurie12}}]
	Let $\C$ and $\mathcal{D}$ be two $\infty$-categories and $W$ be a collection of morphisms in $\C$. We say that a morphism $f : \C \to \mathcal{D}$ \emph{exhibit $\mathcal{D}$ as the localization of $\mathcal{C}$ with respects to $W$} if, for every $\infty$-category $\mathcal{E}$, the composition with $f$ induces a fully faithful embedding 
\[\mathrm{Fun}(\mathcal{D}, \mathcal{E}) \to \mathrm{Fun}(\C, \mathcal{E}),\]
whose essential image is the collection of functors $F: \C \to \mathcal{E}$ which carry each morphism in $W$ to an equivalence in $\mathcal{E}$. In this case, the $\infty$-category $\mathcal{D}$ is determined uniquely up to equivalence by $\C$ and $W$, and is denoted by \[\C[W^{-1}] \ .\] If $\C$ is an ordinary category, we denote this localization by $N_W(\mathcal{C})$. 
\end{definition}

Our main tool for constructing $\infty$-categories will be the following theorem.

\begin{theorem}[\cite{hinichdwyer}] Let $\C$ be an $\infty$-category and $W$ be a collection of morphism in $\C$. The $\infty$-category $\C[W^{-1}]$ exists. In particular, when $\C$ is an ordinary category, the homotopy category $\mathrm{Ho}(N_W(\mathcal{C}))$ is the one-categorical localization of $\C$ with respect to the maps of $W$.
\end{theorem}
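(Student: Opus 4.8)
The plan is to prove the existence of the localization $\C[W^{-1}]$ by combining a general result about localizing $\infty$-categories (for the existence part) with a comparison between $\infty$-categorical and $1$-categorical localizations (for the statement about $\mathrm{Ho}(N_W(\mathcal{C}))$). First I would recall that the existence of $\C[W^{-1}]$ for an arbitrary $\infty$-category $\C$ and collection of morphisms $W$ is a formal consequence of the presentability of the $\infty$-category of (small) $\infty$-categories: one can construct $\C[W^{-1}]$ as a reflective-type localization, either by a small-object argument adjoining inverses to the morphisms in $W$, or equivalently by taking the pushout $\C \amalg_{\coprod_{w\in W}\Delta^1}\coprod_{w\in W}(\Delta^1)^{\sharp}$ in an appropriate model of marked simplicial sets and then fibrantly replacing. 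I would cite \cite{hinichdwyer} (or \cite[Chapter~7]{Cisinski} if one prefers) for the clean statement that this construction enjoys the required universal property: the restriction functor $\mathrm{Fun}(\C[W^{-1}],\mathcal{E})\to\mathrm{Fun}(\C,\mathcal{E})$ is fully faithful with essential image the $W$-inverting functors, for every target $\infty$-category $\mathcal{E}$.

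Next I would address the second sentence, which specializes to the case where $\C$ is an ordinary category (viewed as an $\infty$-category via its nerve, by the earlier Definition \ref{nerve}), so that $\C[W^{-1}] = N_W(\mathcal{C})$. The claim is that $\mathrm{Ho}(N_W(\mathcal{C}))$ is the classical ($1$-categorical) Gabriel--Zisman localization $\mathcal{C}[W^{-1}]$. The key step here is that the homotopy category functor $\mathrm{Ho}(-)$, being left adjoint to the nerve, preserves localizations: given the universal property of $N_W(\mathcal{C})$ among $\infty$-categories and the fact that any ordinary category $\mathcal{D}$ satisfies $\mathrm{Fun}(-,\mathcal{D})\simeq\mathrm{Fun}(\mathrm{Ho}(-),\mathcal{D})$ when the source is an $\infty$-category and we interpret the right-hand side as ordinary functor categories, one deduces that for every ordinary category $\mathcal{D}$ the functors $\mathrm{Ho}(N_W(\mathcal{C}))\to\mathcal{D}$ correspond bijectively (compatibly with natural transformations) to the $W$-inverting functors $\mathcal{C}\to\mathcal{D}$. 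That is exactly the universal property defining the classical localization, so the two categories agree.

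The main obstacle I expect is not the existence statement, which is essentially a citation, but making the comparison with the $1$-categorical localization genuinely rigorous: one must be careful that the mapping spaces of $N_W(\mathcal{C})$ need not be discrete even though $\mathcal{C}$ itself has discrete mapping spaces, so that $N_W(\mathcal{C})$ is in general a genuine $\infty$-category and not the nerve of $\mathcal{C}[W^{-1}]$. The correct argument therefore goes through the universal property with values in ordinary categories (equivalently, $0$-truncated $\infty$-categories) rather than attempting to identify $N_W(\mathcal{C})$ on the nose; applying $\mathrm{Ho}$ is precisely the truncation that recovers the $1$-categorical answer. I would phrase the proof to emphasize this point and refer to \cite{hinichdwyer} for the underlying technical input, keeping the verification of the universal-property chase at the level of a sketch rather than spelling out every adjunction.
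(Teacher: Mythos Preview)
Your sketch is a reasonable outline of the standard argument, but note that the paper itself does not prove this theorem: it is stated with a citation to \cite{hinichdwyer} and no proof is given, as the result is used purely as a black box for constructing $\infty$-categories from ordinary categories with weak equivalences. So there is nothing to compare against; your proposal goes beyond what the paper supplies.

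That said, since you are writing a proof where the paper offers none, a brief remark: your plan is essentially the correct one and matches what one finds in the cited reference or in Cisinski's book. The existence part is indeed a formal construction (pushout in marked simplicial sets or the Dwyer--Kan approach of Hinich), and your argument for the second sentence via the adjunction between $\mathrm{Ho}$ and the nerve is the right mechanism. Your caveat that $N_W(\mathcal{C})$ is generally not the nerve of the $1$-categorical localization is well taken and worth keeping. If you want to tighten the exposition, you could simply state the existence as a citation and then give only the short universal-property chase for $\mathrm{Ho}(N_W(\mathcal{C}))$, which is the only part the reader might not find verbatim in \cite{hinichdwyer}.
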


\subsection{Symmetric monoidal $\infty$-categories}

Let $\mathbf{Fin}_*$ be the category of based finite sets. For every object $I$ of $\mathbf{Fin}_*$, we may choose a pointed bijection \[ \alpha : I \cong \langle n \rangle \coloneqq \{*, 1, \cdots,n\}\ , \] where $n+1$ is the cardinality of $I$. Using these isomorphisms, we may identify $\mathbf{Fin}_*$ with its full subcategory spanned by the objects $\langle n\rangle$ and we shall do so implicitly. For every pair of integers $1 \leqslant  i \leqslant  n$, we let $\rho^i : \langle n \rangle \to \langle 1 \rangle $ denote the morphism given by $\rho^i(j) = 1$ if $i = j$ and $*$ otherwise. 

\begin{definition}
	A \emph{symmetric monoidal $\infty$-category} is a coCartesian fibration of simplicial sets $p : \C^{\otimes} \to \N (\mathbf{Fin}_*)$ satisfying the following Segal condition. Let $\C^{\otimes}_{\langle n \rangle }$ denote the fiber of $p$ over the object $\langle n \rangle$. For each $n \geqslant  0$, the maps \[\{\rho^i : \langle n \rangle \to \langle 1 \rangle \}_{1 \leqslant  i \leqslant  n}\] induce functors $\rho^i_{!} : \C^{\otimes}_{\langle n \rangle } \to \C^{\otimes}_{\langle 1 \rangle } $ assembling into a map
	\[\C^{\otimes}_{\langle n \rangle } \to (\C^{\otimes}_{\langle 1 \rangle })^n.\]
	The Segal condition is requiring that this map is an equivalence of $\infty$-categories.
\end{definition}

\begin{definition}
A \emph{strong symmetric monoidal functor} between $\infty$-categories is a simply a morphism of coCartesian fibrations. A \emph{lax symmetric monoidal functor} is a map that is only required to preserve certain cocartesian morphisms (the so-called ``inert morphisms'').
\end{definition}

Our two main examples are the following.

\begin{example}\label{infinySMC}
	Let $(\C, \otimes, \mathbf{1})$ be a symmetric monoidal category. 
	\begin{enumerate}
		\item In \cite[Construction~2.0.0.1]{Lurie12} Lurie constructs a symmetric monoidal $\infty$-category
		\[\N(\C)^{\otimes}\to\N(\mathbf{Fin}_*)\]
		whose fiber $\N(\C)_{\langle 1\rangle}$ is identified with $\N(\C)$. This way, ordinary symmetric monoidal categories can be seen as symmetric monoidal $\infty$-categories. 
		\item Suppose that $\C$ is equipped with a collection of weak equivalences $W$. Assume that for all $X \in \C$ all $f : A \to B \in W$, the induced map $X \otimes A \to X \otimes B$ is in $W$, then $\N_W \C$ inherits a structure of symmetric monoidal $\infty$-category (see \cite[Proposition 3.2.2]{Hinich15}) that we shall denote by $N_W(\C)^{\otimes}$.
	\end{enumerate}
\end{example}

\begin{definition}
Let $(\mathcal{A}, \otimes, \mathbf{1})$ be an abelian symmetric monoidal category with infinite direct sums. We denote by $\mathbf{Ch}_*(\mathcal{A})$ the $\infty$-category obtained from $\ch_*(\mathcal{A})$ by inverting the quasi-isomorphisms~:
\[\mathbf{Ch}_*(\mathcal{A}) \coloneqq \N_{W} \ch_*(\mathcal{A}),\]
where $W$ is the class of all quasi-isomorphisms. 
\end{definition}

Assume now that the tensor product of $\mathcal{A}$ is exact in each variables. Let $X \in \ch_*(\mathcal{A})$ and $f : A \to B$ a quasi-isomorphism. Then the induced map $X \otimes A \to X \otimes B$ is also a quasi-isomorphism. Example \ref{infinySMC} implies that $\mathbf{Ch}_*(\mathcal{A})$ inherits a structure of symmetric monoidal $\infty$-category that we shall denote $\mathbf{Ch}_*(\mathcal{A})^{\otimes}$.

\begin{definition}\label{formal infinity functors}
Let $\C$ be a symmetric monoidal category and \[F : \N(\C) \to \mathbf{Ch}_*(\mathcal{A})\] a lax symmetric monoidal functor in the $\infty$-categorical sense. We say that $F$ is a \emph{formal symmetric monoidal $\infty$-functor} if $F$ and $H \circ F$ are equivalent in the $\infty$-category of  symmetric monoidal functors from $\N(\C)$ to $\mathbf{Ch}_*(\mathcal{A})$. 
\end{definition}

\begin{remark}
Clearly, a formal symmetric monoidal functor $\C \to \ch_*(\mathcal{A})$ induces a formal symmetric monoidal $\infty$-functor $\N(\C) \to \mathbf{Ch}_*(\mathcal{A})$. The following theorem due to Hinich gives a partial converse. 
\end{remark}

\begin{theorem}[\cite{Hinich15}]
Let $\C$ be a small symmetric monoidal category and let $\mathbf{k}$ be a characteristic zero field. If two symmetric monoidal functors \[F, G : \C \to \ch_*(\mathbf{k})\] are equivalent as symmetric monoidal $\infty$-functors $\N(\C) \to \mathbf{Ch}_*(\mathbf{k})$, they are weakly equivalent as symmetric monoidal functors.   
\end{theorem}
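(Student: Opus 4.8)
The plan is to reduce the statement to a question about mapping spaces in a suitable $\infty$-category of lax symmetric monoidal functors, and to transport an equivalence there back to a classical zig-zag. First I would recall from Hinich's work \cite{Hinich15} that over a characteristic zero field $\mathbf{k}$ the category $\ch_*(\mathbf{k})$ carries a model structure (the projective one) whose underlying $\infty$-category is $\mathbf{Ch}_*(\mathbf{k})$, and that this model structure is symmetric monoidal in a way compatible with the monoidal structure on $\mathbf{Ch}_*(\mathbf{k})^{\otimes}$ described in Example \ref{infinySMC}. Crucially, $\ch_*(\mathbf{k})$ is a \emph{combinatorial symmetric monoidal model category} satisfying the monoid axiom, so the category of lax symmetric monoidal functors $\mathrm{Fun}^{\otimes,\mathrm{lax}}(\C, \ch_*(\mathbf{k}))$ — equivalently, the category of algebras over the colored operad $\mathcal{Q}$ of Proposition \ref{equivalence} — inherits a transferred model structure by \cite[Theorem 4.1.1]{Hinich97} (using that $\C$ is small so $\mathcal{Q}$ has a set of colors).

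The key step is then a rectification-type statement: the $\infty$-category underlying this transferred model category on lax symmetric monoidal functors $\C \to \ch_*(\mathbf{k})$ is equivalent to the $\infty$-category of lax symmetric monoidal $\infty$-functors $\N(\C) \to \mathbf{Ch}_*(\mathbf{k})$. This is precisely the content of Hinich's comparison theorem in \cite{Hinich15} (and is where characteristic zero is used, via the fact that operads in $\mathbf{k}$-modules are $\Sigma$-cofibrant, or rather that the relevant operad is admissible and the rectification holds). Granting this, an equivalence between $F$ and $G$ as symmetric monoidal $\infty$-functors corresponds to an isomorphism between $F$ and $G$ in the homotopy category of the transferred model structure. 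By the usual model-categorical machinery, such an isomorphism is witnessed by a zig-zag of weak equivalences through cofibrant–fibrant replacements; since weak equivalences in the transferred structure are objectwise quasi-isomorphisms, this zig-zag is exactly a zig-zag of natural transformations of lax symmetric monoidal functors that are objectwise quasi-isomorphisms. Because any object is fibrant in the projective model structure on $\ch_*(\mathbf{k})$, one can even arrange the zig-zag to have length two through a single cofibrant replacement, exactly as in the Remark following Definition \ref{formal}.

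The main obstacle — and the step that genuinely requires characteristic zero — is the rectification equivalence between strict lax symmetric monoidal functors and their $\infty$-categorical counterparts. In positive characteristic this fails because the commutative operad (and hence the colored operad $\mathcal{Q}$) is not $\Sigma$-cofibrant, so the transferred model structure need not present the correct $\infty$-category of algebras; here one must invoke the fact that over a $\mathbb{Q}$-algebra every operad is admissible and underlies its own cofibrant resolution up to weak equivalence, which is the technical heart of Hinich's argument. Once that comparison is in hand, the remainder is a formal unwinding: translate "equivalent as symmetric monoidal $\infty$-functors" into "isomorphic in the homotopy category of the transferred model category", and translate that back into a zig-zag of objectwise quasi-isomorphisms, i.e.\ a weak equivalence of symmetric monoidal functors in the sense of Definition \ref{formal2}.
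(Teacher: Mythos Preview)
Your proposal is correct and is essentially an expanded sketch of Hinich's rectification argument. The paper itself provides no proof of this theorem beyond the citation to \cite{Hinich15}; the result is quoted as a black box. Your outline (identify lax symmetric monoidal functors $\C\to\ch_*(\mathbf{k})$ with algebras over the colored operad $\mathcal{Q}$ of Proposition \ref{equivalence}, transfer the model structure, and invoke Hinich's comparison $\mathrm{N}_W(\mathrm{Alg}_{\mathcal{Q}}(\ch_*(\mathbf{k})))\simeq\mathbf{Alg}_{\mathcal{Q}}(\mathbf{Ch}_*(\mathbf{k}))$ to translate an $\infty$-equivalence into a zig-zag of objectwise quasi-isomorphisms) is exactly the mechanism behind the cited reference, and the paper later alludes to this same mechanism in the discussion preceding Proposition \ref{hinich torsion}.
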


\begin{corollary}\label{infinity functors}
Let $\mathbf{k}$ be a characteristic zero field. Let $\C$ be a small symmetric monoidal category. Let $F :\C \to \ch_*(\mathbf{k})$ be a symmetric monoidal functor. If $F$ is formal as a symmetric monoidal $\infty$-functor $\N(\C) \to \mathbf{Ch}_*(\mathbf{k})$, then $F$ is formal as a symmetric monoidal functor.  
\end{corollary}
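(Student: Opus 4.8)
The plan is to deduce this corollary immediately from the preceding theorem of Hinich, applied with $G := H \circ F$.

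First I would record the elementary observation that the homology functor $H : \ch_*(\mathbf{k}) \to \ch_*(\mathbf{k})$ is a symmetric monoidal functor (in fact strong, since $\mathbf{k}$ is a field, via the Künneth isomorphism) which sends quasi-isomorphisms to isomorphisms, hence in particular to quasi-isomorphisms. Consequently $H \circ F : \C \to \ch_*(\mathbf{k})$ is again an honest symmetric monoidal functor valued in $\ch_*(\mathbf{k})$. The one point deserving a line of justification is that the symmetric monoidal $\infty$-functor $\N(\C) \to \mathbf{Ch}_*(\mathbf{k})$ associated to the strict composite $H \circ F$ is equivalent to the composite of the $\infty$-functor associated to $F$ with the $\infty$-endofunctor of $\mathbf{Ch}_*(\mathbf{k})$ induced by $H$: this is the $2$-functoriality of the Dwyer--Kan localization $\N_W$, together with its compatibility with symmetric monoidal structures as in Example \ref{infinySMC}(2), applied to the $W$-preserving symmetric monoidal functor $H$. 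Granting this identification, the hypothesis that $F$ is formal as a symmetric monoidal $\infty$-functor (Definition \ref{formal infinity functors}) says precisely that $F$ and $G = H \circ F$ are equivalent in the $\infty$-category of symmetric monoidal functors $\N(\C) \to \mathbf{Ch}_*(\mathbf{k})$.

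I would then invoke the theorem of Hinich stated just above, whose hypotheses are exactly those in force here ($\C$ a small symmetric monoidal category and $\mathbf{k}$ of characteristic zero), to conclude that $F$ and $G = H \circ F$ are weakly equivalent as symmetric monoidal functors; that is, they are connected by a zig-zag of natural transformations of symmetric monoidal functors valued in $\ch_*(\mathbf{k})$ whose components are quasi-isomorphisms on every object of $\C$. By Definition \ref{formal2}, this is exactly the statement that $F$ is a formal symmetric monoidal functor, which completes the argument.

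The main, and essentially the only, difficulty is not located in this corollary at all: it is packaged into the cited theorem of Hinich, whose content is a rectification result asserting that an $\infty$-categorical equivalence between the images of two strict symmetric monoidal functors can be realized by a chain of strict symmetric monoidal natural quasi-isomorphisms, and which genuinely uses the characteristic-zero hypothesis (the statement fails in positive characteristic). The only care needed on top of that is the bookkeeping identifying $H \circ F$ computed strictly with $H \circ F$ computed in the $\infty$-categorical world, i.e.\ the compatibility of localization with composition and with the monoidal structure, which is routine.
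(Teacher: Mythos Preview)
Your proposal is correct and is precisely the intended argument: the paper does not even spell out a proof for this corollary, treating it as an immediate consequence of Hinich's theorem applied with $G = H\circ F$. Your additional care about identifying the strict composite $H\circ F$ with its $\infty$-categorical counterpart is a reasonable remark, but as you note it is routine bookkeeping and not something the paper dwells on.
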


\section{\textcolor{bordeau}{Mixed Hodge structures}}\label{Hodge}

In paper \cite{CH20}, Cirici and the second author use mixed Hodge theory to produce decompositions for the singular chains functor and dually for Sullivan's functor in order to deduce formality. The purpose of this section is to explain these results. We denote by $\mathrm{Var}_{\Co}$ the category of complex schemes that are reduced, separated and of finite type. We will use the word variety for an object of this category.

\subsection{The definition of Mixed Hodge structures}

 We start by recalling what mixed Hodge structures are as well as their properties. 

\begin{definition}[Pure Hodge structures]
A \emph{pure Hodge structure} over $\mathbb{Q}$ of weight $n$ is a finite dimensional $\QQ$-vector space $V$ together with a decomposition of its complexification into a finite direct sum of complex subspaces
\[V \otimes_{\QQ} \Co = \bigoplus_{p=-\infty}^{+\infty} U_{p,n-p}, \quad \mbox{such that } \overline{U}_{p,n-p}= U_{n-p,p} \;.\]
\end{definition}

\begin{remark}
An equivalent definition is obtained by replacing the direct sum decomposition of $V \otimes_{\QQ} \Co$ with the \emph{Hodge filtration}, a finite decreasing filtration of $H : = V \otimes_{\QQ} \Co$ by complex subspaces $F^p H$ for $p \in \mathbb{Z}$, subject to the condition 
\[F^p H \oplus \overline{F^{n+1-p} H} = H\]
for all $p \in \mathbb{Z}$. The relation between these two descriptions is given by  $$U_{p,q} = F^p H \cap \overline{F^q H}\quad \mbox{and} \quad F^p H = \bigoplus_{i \geqslant  p} U_{i,n-i}\ . $$   
\end{remark}

\begin{definition}[Mixed Hodge structures]
A \emph{mixed Hodge structure} on a finite dimensional $\QQ$-vector space $V$ is given by 
\begin{enumerate}
	\item a finite increasing filtration $W$ of $V$, called the weight filtration; 
	\item a decreasing filtration $F$ on $H \coloneqq V \otimes_{\QQ} \Co$, called the Hodge filtration; 
\end{enumerate}
such that for all $m \geqslant  0$, each $\mathbb{Q}$-vector space \[Gr^m_W V \coloneqq W_m V/ W_{m-1} V \] is a pure Hodge structure with respect to the filtration induced by $F$ on \[Gr^m_W V \otimes_{\QQ} \Co \ .\] Morphisms of mixed Hodge structures are given by morphisms $f : V \to V'$ of $\QQ$-vector spaces that are compatible with filtrations. We denote by $\mathrm{MHS}_{\QQ}$ the category of mixed Hodge structures over $\QQ$.  
\end{definition}

\begin{remark}
Mixed Hodge structures form an abelian category by \cite[Theorem~2.3.5]{Deligne71}. Deligne shows that the morphisms are necessarily strictly compatible with both filtrations. The kernels and cokernels in this category coincide with the usual kernels and cokernels in the category of vector spaces, with the induced filtrations. Moreover, there is a symmetric monoidal structure on the category of mixed Hodge structures given by the usual tensor product of underlying vector spaces equipped with the induced filtrations.
\end{remark}

The theory of mixed Hodge structures on the cohomology of algebraic varieties was introduced by Deligne in 1970s (see \cite{Deligne1, Deligne71, Deligne74}).

\begin{theorem}[Deligne]\label{deligne}
Let $X$ be an algebraic variety over $\Co$ and $n \geqslant  0$. 
\begin{enumerate}
	\item The cohomology group $H^n(X; \QQ)$ carries a canonical mixed Hodge structure.
	\item This structure is functorial and compatible with the Künneth isomorphism.
	\item If $X$ is smooth and proper, then the mixed Hodge structure of $H^n(X; \QQ)$ is pure of weight $n$. 
\end{enumerate}
\end{theorem}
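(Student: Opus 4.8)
This is Deligne's theorem from \cite{Deligne71, Deligne74}; we only sketch the architecture of the argument. The plan is to construct the mixed Hodge structure on $H^n(X;\QQ)$ in three stages of increasing generality, using part (3) as the base case. If $X$ is smooth and proper, then the analytification $X^{an}$ is a compact K\"ahler manifold, so classical Hodge theory supplies a decomposition $H^n(X;\Co)=\bigoplus_{p+q=n}H^{p,q}$ with $H^{p,q}=H^q(X,\Omega^p_X)$ and $\overline{H^{p,q}}=H^{q,p}$; equivalently, the Hodge filtration $F^p=\bigoplus_{i\geqslant p}H^{i,n-i}$ is the filtration induced on $\mathbb{H}^n(X,\Omega^\bullet_X)$ by the stupid filtration of the de Rham complex, the Hodge--de Rham spectral sequence degenerating at $E_1$. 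This is a pure Hodge structure of weight $n$ (trivial weight filtration), which is exactly part (3).

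Next, for $X$ smooth but possibly open, choose by Hironaka a smooth compactification $j:X\hookrightarrow\bar X$ with $D=\bar X\setminus X$ a normal crossings divisor. Then $H^n(X;\Co)\cong\mathbb{H}^n(\bar X,\Omega^\bullet_{\bar X}(\log D))$, and one equips this hypercohomology with the Hodge filtration coming from the stupid filtration and the weight filtration $W$ coming (after a shift) from the order of log poles. The key computation is that $\mathrm{Gr}^W$ is expressed, via the residue/Poincar\'e isomorphism, in terms of the cohomology of the smooth proper intersections $D^{(k)}$ of the components of $D$; Stage~1 then exhibits each $\mathrm{Gr}^W$ as a pure Hodge structure, which is precisely what is required for a mixed Hodge structure. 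Independence of the compactification follows from the fact that any two are dominated by a third together with strictness of morphisms of mixed Hodge structures.

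For an arbitrary variety $X$, one takes a proper hypercovering $a:X_\bullet\to X$ with each $X_p$ smooth (resolution of singularities), so that cohomological descent yields a spectral sequence $E_1^{p,q}=H^q(X_p;\QQ)\Rightarrow H^{p+q}(X;\QQ)$. Each $E_1^{p,q}$ carries a mixed Hodge structure by Stage~2, the differential $d_1$ is a morphism of mixed Hodge structures (being induced by morphisms of smooth varieties, whose functoriality is established along the way), and since $\mathrm{MHS}_{\QQ}$ is an abelian category the subquotients on every page again carry mixed Hodge structures; the spectral sequence degenerates at $E_2$, so $H^n(X;\QQ)$ inherits a mixed Hodge structure as an iterated extension. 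Functoriality in $X$ (first half of (2)) requires covering a morphism $f:X\to Y$ by a compatible morphism of hypercoverings and checking independence of all choices, and Künneth compatibility (second half of (2)) follows because external products of log complexes are compatible with both filtrations --- the weight filtration on the tensor product being the convolution --- and a hypercovering of $X\times Y$ can be taken to refine the product of hypercoverings.

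The genuine difficulty is not the existence of the filtrations on any single model, but canonicity: showing the construction is independent of the compactification, the resolution, and the hypercovering, and behaves functorially. The clean way to handle this, and the way we would organize the proof, is to package all the data into Deligne's notion of a \emph{mixed Hodge complex} and work in the corresponding derived category of (bi)filtered complexes, where the comparison maps between different choices become canonical isomorphisms; this is the technical core of \cite{Deligne74}, to which we refer for the full argument.
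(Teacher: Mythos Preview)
Your sketch is correct and follows essentially the same architecture as the paper's own ``Ideas of the proof'': both start from the smooth proper case via the holomorphic de Rham complex with its stupid filtration, then pass to the smooth open case using a compactification with normal crossing boundary, where the paper's Leray spectral sequence for the inclusion $X\subset\bar X$ with $E_1^{-s,t}=H^{t-2s}(D^{(s)})$ is exactly the weight spectral sequence of your log-de-Rham complex, the $\mathrm{Gr}^W$ pieces being identified with cohomology of the strata $D^{(s)}$ via residues. Your version is in fact more complete than the paper's sketch---you treat the singular case via smooth proper hypercoverings and cohomological descent (which the paper omits), and you correctly identify the mixed Hodge complex formalism of \cite{Deligne74} as the device that makes canonicity and functoriality manageable.
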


\begin{proof}[Ideas of the proof]
First, the cohomology with complex coefficients is given by the hypercohomology of the holomorphic de Rham complex~:
\[H^n(X;\mathbb{C})=\mathbb{H}^n(X;\Omega^*) \ , \] see \cite{Grothendieck}. 
In the smooth and proper case, the Hodge filtration is then the filtration induced by the so-called stupid filtration on this complex of sheaves,
\[F^pH^n(X;\mathbb{C})=\mathrm{im}\left(\mathbb{H}^n\left(X;\Omega^{*\geqslant  p}\right)\to \mathbb{H}^n\left(X;\Omega^*\right)\right) \ .\] 
It can then be shown using Hodge theory that this filtration is indeed a pure Hodge structure of weight $n$. In the smooth case, we can proceed as follows for the weight filtration. Using Nagata's compactification theorem and Hironaka's theorem on resolution of singularities, the variety $X$ can be embedded in a smooth complete variety $\overline{X}$ so that the complement $\overline{X}-X$ is a normal crossing divisor. This means that the inclusion $X \subseteq \overline{X}$ is locally a union of coordinate hyperplanes in $\Co^n$. Considering the Leray-Serre spectral sequence associated to the inclusion $X \subset \overline{X}$, we obtain a spectral sequence which will converge to the cohomology of $X$. The $E^1$-page of this spectral sequence looks as follows
\[E^{-s,t}_1 = H^{t-2s}\left(D^{(s)}, \QQ\right) \] where $D^{(s)}$ is the disjoint union of all $s$-fold intersections of components of the divisor. The groups $H^{t-2s}(D^{(s)}; \QQ)$ correspond to the cohomology of smooth and proper varieties so they carry pure Hodge structures. As with any spectral sequence, we get a filtration on the target whose associated graded is the page $E_{\infty}$ of the spectral sequence. In this particular case, the filtration on $H^n(X; \QQ)$ is, up to a shift, the weight filtration. Note that the associated graded of this filtration will be sub-quotients of cohomology groups of smooth and proper varieties so they carry pure Hodge structures. The functoriality of this construction is not obvious since the compactifications are not functorial. However, it turns out that the $E^2$ page of the Leray-Serre spectral sequence is natural (although it is not the case for $E^1$).  
\end{proof}

Let $\mathrm{FVect}_{K}$ be the category of filtered vector spaces over a certain field $K$. The weight filtration induces a functor \[W : \mathrm{MHS}_{\mathbb{Q}} \longrightarrow \mathrm{FVect}_{\QQ} \ .\]

\begin{theorem}[{\cite[Lemma 4.4]{CH20}}]\label{Splitting sur Q}
The weight filtration of $\mathrm{MHS}_{\QQ}$ naturally splits over $\QQ$ as a strong symmetric monoidal functor, i.e. the diagram 
\begin{center}
\begin{tikzcd}
&\mathrm{grVect}_{\QQ}\arrow[d,"T"]\\
\mathrm{MHS}_{\mathbb{Q}}\arrow[r, "W"'] \arrow[ru, "G"] & \mathrm{FVect}_{\QQ}
\end{tikzcd}
\end{center}
where $G\left(V\right)^p = Gr^W_p\left(V\right)$ and $T$ is the totalization functor
\[W^p\left(T\left(V\right)\right)=\oplus_{i\leqslant  p}V^i \ ,\] commutes up to a natural isomorphism.
\end{theorem}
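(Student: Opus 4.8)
The plan is to isolate the one nontrivial point---that the weight filtration on $\mathrm{MHS}_{\QQ}$ admits a functorial, tensor-compatible splitting over $\QQ$---and to obtain it from Tannakian duality. First I would unwind the statement: what has to be constructed is a natural isomorphism of filtered $\QQ$-vector spaces $\Phi_V\colon\bigoplus_p Gr^W_p(V)\xrightarrow{\sim}V$ which carries $\bigoplus_{i\leqslant p}Gr^W_i(V)$ onto $W_p(V)$ and is compatible with tensor products. Granting such a $\Phi$, one sets $G(V)^p\coloneqq Gr^W_p(V)$, and $\Phi$ is precisely the asserted natural isomorphism $T\circ G\cong W$; the strong symmetric monoidal structure on $G$ and the monoidality of $\Phi$ are then formal, using only that on $\mathrm{MHS}_{\QQ}$ the weight filtration is exact and strictly compatible with $\otimes$ (Deligne's strictness theorem, \cite[Theorem~2.3.5]{Deligne71}), which provides the canonical identification $Gr^W_\bullet(V\otimes V')\cong Gr^W_\bullet(V)\otimes Gr^W_\bullet(V')$.

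To produce $\Phi$ I would work Tannakially. The same strictness makes $\mathrm{MHS}_{\QQ}$ a neutral Tannakian category over $\QQ$, with fiber functor $\omega$ given by forgetting both filtrations; write $\mathfrak{G}\coloneqq\Aut^{\otimes}(\omega)$ for its affine group scheme. The weight filtration is then an exhaustive, separated $\otimes$-filtration of $\omega$, and the crucial input is that every such filtration of a fiber functor is split by a cocharacter: there exists $\nu\colon\mathbb{G}_m\to\mathfrak{G}$, i.e.\ a $\mathbb{Z}$-grading $\omega(V)=\bigoplus_n\omega(V)_n$ natural in $V$ and compatible with $\otimes$, whose associated increasing filtration equals $W_\bullet(V)$ (Saavedra Rivano's theory of filtrations on Tannakian categories; see also the work of Ziegler, and Deligne--Milne). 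From $\nu$ one reads off the canonical isomorphisms $\omega(V)_p\cong W_p(V)/W_{p-1}(V)=Gr^W_p(V)$ and $\bigoplus_{n\leqslant p}\omega(V)_n=W_p(V)$, hence the map $\Phi$; its compatibility with $\otimes$ is automatic because the grading comes from a cocharacter. If instead one wants a hands-on construction of $\nu$: the split mixed Hodge structures form a Tannakian subcategory on which the weight grading is already a cocharacter, the resulting quotient $\mathfrak{G}\twoheadrightarrow\mathfrak{G}'$ onto the Tannakian group of that subcategory has pro-unipotent kernel---since that kernel acts trivially on every $Gr^W_\bullet$, hence unipotently---and a cocharacter lifts through a pro-unipotent extension in characteristic zero by a Levi decomposition; one then checks, by induction on the length of $W_\bullet(V)$ using exactness of $\omega$ and the short exact sequences $0\to W_{p-1}V\to W_pV\to Gr^W_pV\to 0$, that the lifted cocharacter induces exactly $W_\bullet$.

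I expect the existence of the cocharacter $\nu$---equivalently, the pro-unipotence of the ``mixed part'' of $\mathfrak{G}$---to be the main obstacle, and really the whole substance of the argument. Some input of this kind is unavoidable: the statement fails at the purely formal level, as the identity functor of $\mathrm{FVect}_{\QQ}$ does not factor through $\mathrm{grVect}_{\QQ}$. Once the splitting theorem for filtered fiber functors is invoked, the definition of $G$ and $\Phi$ and the verification that the triangle commutes as a diagram of strong symmetric monoidal functors are bookkeeping.
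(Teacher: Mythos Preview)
Your approach is correct but follows a genuinely different route from the paper's. The paper does not invoke Tannakian duality at all: instead it takes as input Deligne's explicit splitting over $\mathbb{C}$ (from \cite[1.2.11]{Deligne71}), observes that the set of all $\otimes$-splittings of $W$ is a torsor under a pro-unipotent $\mathbb{Q}$-group, and concludes that the existence of a $\mathbb{C}$-point forces a $\mathbb{Q}$-point. Your argument bypasses Deligne's $\mathbb{C}$-splitting entirely: you identify $\mathrm{MHS}_{\mathbb{Q}}$ with $\mathrm{Rep}(\mathfrak{G})$, interpret $W$ as an exact $\otimes$-filtration on the fiber functor, and appeal to the general splitting theorem for filtered fiber functors (equivalently, to lifting a cocharacter through a pro-unipotent kernel via Levi decomposition). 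The underlying mechanism is the same---pro-unipotence makes the obstruction vanish---but the paper uses it to trivialize a torsor known to have a $\mathbb{C}$-point, whereas you use it to produce the cocharacter directly over $\mathbb{Q}$. Your route is more conceptual and applies verbatim to any neutral Tannakian category equipped with an exact $\otimes$-filtration; the paper's route is closer to the ground and makes transparent the analogy with Drinfeld's descent of associators from $\mathbb{C}$ to $\mathbb{Q}$ (which the paper remarks on explicitly). One small comment: in your ``hands-on'' variant you should be a bit careful about what plays the role of $\mathfrak{G}'$---it is cleanest to take it to be $\Aut^{\otimes}(Gr^W\circ\omega)$, the group attached to the associated-graded fiber functor, rather than the Tannakian group of a subcategory---but the pro-unipotence of the kernel and the lifting of the weight cocharacter go through exactly as you say.
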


\begin{proof}
This theorem was proved over the complex numbers by Deligne \cite[1.2.11]{Deligne71}. He constructs a functor $G_{\Co} : \mathrm{MHS}_{\QQ} \to \mathrm{grVect}_{\Co}$ defined by \[G_{\Co}\left(V\right)^p = Gr^W_p\left(V \otimes_{\QQ}\Co \right)\] that makes the following diagram commute
\begin{center}
\begin{tikzcd}
&\mathrm{grVect}_{\Co}\arrow[d,"T"]\\
\mathrm{MHS}_{\mathbb{Q}}\arrow[r,"W"'] \arrow[ru, "G_{\Co}"] & \mathrm{FVect}_{\Co}
\end{tikzcd}
\end{center}
where $T$ denotes the totalization functor. As proved in \cite{CH20}, this result can be descended to $\QQ$ using the fact that the set of such splittings form a torsor over a pro-unipotent algebraic group. Since this torsor has a $\Co$-point (given by Deligne's splitting) we obtain that it has a $\QQ$-point.
\end{proof}

\begin{remark}
It is important to emphasize that the construction of this splitting over $\QQ$ is through obstruction theory and does not give a formula for the splitting (contrary to Deligne's formula over $\Co$).
\end{remark}

\begin{remark}
The reasoning used in the above is strongly reminiscent of the proof of existence of Drinfeld's'd associators over $\QQ$. In that case one can explicitly construct associators over $\Co$ using the Knizhnik–Zamolodchikov equation. Since the set of associators is a torsor over a pro-unipotent group scheme, there must exists associators over $\QQ$ (see \cite{Drinfeld90}). 
\end{remark}

\subsection{Purity}

Let $\mathcal{A}$ be a symmetric monoidal abelian category. Denote by gr$\mathcal{A}$ the category of graded objects of $\mathcal{A}$ which inherits a symmetric monoidal structure as before. Let $\alpha$ be a rational number. We denote by \[\ch_*(gr\mathcal{A})^{\alpha \mbox{-}pure}\] the full subcategory of $\ch_*(gr\mathcal{A})$ spanned by those graded complexes $V = \bigoplus V^p_n$ with \emph{$\alpha$-pure homology}, i.e. such that $$H_n(V)^p = 0 \quad \mbox{for all } p \neq \alpha n.$$

Proposition \ref{forget} can be generalized as follows.

\begin{proposition}[{\cite[Proposition~2.7.]{CH20}}]\label{forget général}
	Let $\alpha$ be a non-zero rational number. The forgetful functor defined by forgetting the degree \[\ch_*(gr\mathcal{A})^{\alpha \text{-} pure} \longrightarrow \ch_*(\mathcal{A}) \]  is a formal as lax symmetric monoidal functor.  
\end{proposition}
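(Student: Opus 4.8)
The plan is to reduce Proposition~\ref{forget général} to the case $\alpha = 1$ already treated in Proposition~\ref{forget}, by means of a ``regrading'' (Adams-type) self-equivalence of the category of graded chain complexes. First I would introduce, for a non-zero rational number $\alpha$, written as $\alpha = a/b$ with $a,b$ coprime integers and $b>0$, a functor that rescales weights: on a graded object it sends the piece in weight $p$ to weight $\alpha p$ whenever $p$ is a multiple of $b$, and otherwise records that the weight-$p$ part must vanish for an $\alpha$-pure complex. More precisely, since the homology of an $\alpha$-pure complex is concentrated in weights that are multiples of $\gcd$-compatible with $a$, the relevant subcategory only sees weights in $b\mathbb{Z}$ (up to the differential, which preserves weight), so one first checks that $\ch_*(gr\mathcal{A})^{\alpha\text{-}pure}$ is equivalent, as a symmetric monoidal category, to $\ch_*(gr\mathcal{A})^{(a/b)\text{-}pure}$ sitting inside complexes graded by $b\mathbb{Z}$, and then that relabelling $bq \mapsto aq$ gives a strong symmetric monoidal equivalence onto $\ch_*(gr\mathcal{A})^{1\text{-}pure}$. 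The key point is that this relabelling is compatible with the tensor product: $(C\otimes D)^p_m = \bigoplus C^q_n \otimes D^{p-q}_{m-n}$, and rescaling the weight index by the same rational factor $\alpha$ on all terms is additive, hence respects the Koszul-free symmetry isomorphism on the weight grading.

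Next I would assemble the commuting square of forgetful functors. Writing $R_\alpha : \ch_*(gr\mathcal{A})^{\alpha\text{-}pure} \xrightarrow{\ \sim\ } \ch_*(gr\mathcal{A})^{1\text{-}pure}$ for the regrading equivalence constructed above, one has on the nose a commuting triangle
\[
\begin{tikzcd}
\ch_*(gr\mathcal{A})^{\alpha\text{-}pure} \arrow[rr, "R_\alpha"] \arrow[dr, "U_\alpha"'] & & \ch_*(gr\mathcal{A})^{1\text{-}pure} \arrow[dl, "U_1"] \\
& \ch_*(\mathcal{A}) &
\end{tikzcd}
\]
where $U_\alpha$ and $U_1$ are the respective weight-forgetting functors: indeed forgetting the weight after relabelling it gives the same underlying complex. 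Since $R_\alpha$ is a strong symmetric monoidal equivalence, it is in particular a formal lax symmetric monoidal functor (it is weakly equivalent to $H\circ R_\alpha$ trivially, as it is exact and commutes with homology); applying Proposition~\ref{composition} to the composite $U_1 \circ R_\alpha = U_\alpha$, together with Proposition~\ref{forget} which gives formality of $U_1$, yields formality of $U_\alpha$. One should double-check that Proposition~\ref{composition} as stated covers this situation (it is phrased for $U$ landing in $\ch_*(R)$, but the same argument works with values in any $\ch_*(\mathcal{A})$, and indeed Proposition~\ref{forget} is already stated in that generality in \cite{CH20}).

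The main obstacle, and the only place requiring genuine care, is the bookkeeping in the regrading step when $\alpha$ is not an integer or is negative: one must be honest that an $\alpha$-pure complex can a priori have nonzero terms in weights $p$ with $p \notin b\mathbb{Z}$, and argue that such terms form a direct summand (as a subcomplex, since $d$ preserves weight) that is acyclic, hence contractible is \emph{not} automatic — but it \emph{is} irrelevant, because the forgetful functor and the target category only ever need the equivalence to be compatible with homology, and one can instead simply define $R_\alpha$ on the subcategory where only weights in $b\mathbb{Z}$ appear after observing that the inclusion of that subcategory into $\ch_*(gr\mathcal{A})^{\alpha\text{-}pure}$ need not be an equivalence. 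Cleaner is to avoid this entirely: note that the statement of Proposition~\ref{forget général} is about a functor \emph{out of} $\ch_*(gr\mathcal{A})^{\alpha\text{-}pure}$, so it suffices to exhibit \emph{any} factorization-up-to-weak-equivalence of $U_\alpha$ through $U_1$; and the honest regrading map $p \mapsto \alpha^{-1}p$ is defined on \emph{all} of $gr\mathcal{A}$ as a functor to $\mathbb{Q}$-graded objects, under which $\alpha$-purity becomes ordinary ($1$-)purity, so one works with $\mathbb{Q}$-graded complexes throughout and the parity issue disappears. I would present the argument in this last form, treating the passage between $\mathbb{Z}$-graded and $\mathbb{Q}$-graded graded objects as the routine verification it is, with the negative-$\alpha$ case handled by the sign being harmless for the (sign-free) weight symmetry.
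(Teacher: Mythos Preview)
The paper does not actually give a proof of Proposition~\ref{forget général}: it is stated as a generalization of Proposition~\ref{forget} and attributed to \cite[Proposition~2.7]{CH20}, without further argument here. So there is no in-paper proof to compare against line by line.

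That said, your strategy is sound and is a genuinely clean way to deduce the general case from the $\alpha=1$ case. The final form you settle on---passing to $\mathbb{Q}$-graded objects and regrading by $p \mapsto \alpha^{-1}p$---is the right move: it is a strong symmetric monoidal equivalence onto its image (additivity of the weight index is all that is used, and the symmetry carries no sign on the weight), it takes $\alpha$-pure complexes to $1$-pure complexes, and it commutes on the nose with forgetting the weight. Proposition~\ref{composition} then finishes the job. Your worry about Proposition~\ref{forget} being stated only for $\mathbb{Z}$-gradings is harmless: nothing in that argument uses more than an abelian-group grading with a sign-free symmetry, so the $\mathbb{Q}$-graded version holds verbatim.

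Two presentational remarks. First, the two false starts (restricting to weights in $b\mathbb{Z}$, then worrying about acyclic summands in the other weights) should be cut: they are distracting and, as you yourself note, unnecessary once you allow $\mathbb{Q}$-gradings. Second, the direct argument in \cite{CH20} presumably builds the formality zig-zag by an explicit truncation-by-weight construction that works uniformly in $\alpha$; your reduction is shorter but trades that uniform construction for a dependence on the $\alpha=1$ case. Either is fine for a survey.
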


\begin{definition}[$\alpha$-pure variety]
A smooth algebraic variety $X$ over $\Co$ is called \emph{$\alpha \text{-}$pure} if $H^k(X ;\QQ)$ is a pure Hodge structure of weight $\alpha k$ if $\alpha k \in \mathbb{Z}$ and $0$ otherwise. 
\end{definition}

\begin{example}\leavevmode \label{examples of weight computations}
\begin{enumerate}
	\item Smooth proper algebraic varieties over $\Co$ are $1$-pure by Theorem \ref{deligne}. 
	\item The open variety $\Co^* \coloneqq \Co \backslash \{0\}$ is $2$-pure. Indeed, let us consider the standard covering of $\Co \mathbb{P}^1$ given by two copies of $\Co$ whose intersection is $\Co^*$. The Mayer-Vietoris long exact sequence gives an isomorphism, $$H^{n-1}\left(\Co^*; \QQ\right) \cong H^n\left(\Co \mathbb{P}^1; \QQ\right) \cong  \left\{ \begin{array}{lll}
		\QQ &  \mbox{if } n= 2 \\
		0 &  \mbox{otherwise } \\
	\end{array} \right. $$ 
 for all $n \geqslant  2$. In this situation, all the morphisms involved in the long exact sequence are morphisms of mixed Hodge structures. In particular, the isomorphisms above are isomorphisms of mixed Hodge structures. Since $\Co \mathbb{P}^1$ is proper and thus $1$-pure by the first example, $H^2\left(\Co \mathbb{P}^1; \QQ \right)$ is a pure Hodge structure of weight 2, and therefore, so is $H^1\left(\Co^*; \QQ\right)$. Since $H^0\left(\Co^*; \QQ \right)$ is always a pure Hodge structure of weight $0$ and the higher cohomology groups are trivial, the variety $\Co^*$ is $2$-pure. 
	
	\item The variety $\Co^d \backslash \{0\}$ is $2d/(2d-1)$-pure. This space is homotopy equivalent to $\mathbb{S}^{2d-1}$ and thus for all $n \geqslant  0$, 
	$$H^{n}(\Co^d \backslash \{0\}; \QQ)  \cong  \left\{ \begin{array}{lll}
		\QQ &  \mbox{if } n = 0 \mbox{ or } n = 2d-1 \\
		0 &  \mbox{otherwise. } \\
	\end{array} \right. $$ The cohomology group is degree $0$ is of weight $0$. It remains to compute the weight in degree $2d-1$. Consider the following fibration,  $$ \Co^* \longrightarrow \Co ^d \backslash \{0\} \longrightarrow \Co \mathbb{P}^{d-1} \; .$$ The cohomology of the middle term can be computed using the Leray-Serre spectral sequence that will again be compatible with the mixed Hodge structures. We get 
	\[H^*(\Co \mathbb{P}^{d-1}; \QQ) \otimes H^*(\Co^*; \QQ) \implies H^*(\Co^d \backslash \{0\}; \QQ) \ .\]
Moreover, we know that 
\[ H^n(\Co \mathbb{P}^{d-1}; \QQ) \cong  \left\{ \begin{array}{lll}
	\QQ &  \mbox{if } n \mbox{ is even and } 0 \leqslant  n \leqslant  2(d-1)\\
	0 &  \mbox{otherwise .} 
\end{array} \right.
\] Computing the cohomology of $\Co^d \backslash \{0\}$ through the spectral sequence and by counting weight, we can show that $H^{2d-1}(\Co^d \backslash \{0\}; \QQ)$ is of weight $2d$ which implies that the variety $\Co^d \backslash \{0\}$ is $2d/(2d-1)$-pure. 
\end{enumerate}
\end{example}

To give another example, which generalizes example (3) above, we introduce the following definition.

\begin{definition}[Good arrangements]\label{defi : good arrangement}
Let $V$ be a finite dimensional $\Co$-vector space. We say that a finite set $\{H_i\}_{i \in I}$ of subspaces of $V$ is a \emph{good arrangement of codimension $d$ subspaces} if for any $J\subset I$, the intersection $\cap_{i\in J}H_i$ has codimension a multiple of $d$.
\end{definition}

\begin{proposition}\label{arrangements}
	Let $ \{H_1, \dots, H_k\}$ be a good arrangement of codimension $d$ subspaces of $\Co^n$. The algebraic variety $\Co^n - \cup_{i} H_i$ is $2d/(2d -1)$-pure.   
\end{proposition}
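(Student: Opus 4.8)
The goal is to show that the complement $X = \Co^n - \cup_i H_i$ of a good arrangement of codimension $d$ subspaces is $2d/(2d-1)$-pure, i.e.\ that $H^k(X;\QQ)$ is pure of weight $\tfrac{2d}{2d-1}k$ whenever that number is an integer and vanishes otherwise. The plan is to compute the mixed Hodge structure on $H^*(X;\QQ)$ via a spectral sequence built from the stratification by the intersections $\cap_{i\in J}H_i$, and to keep track of weights using the two basic inputs from the excerpt: smooth proper varieties are $1$-pure (Theorem \ref{deligne}(3)), and all the maps in the relevant long exact/spectral sequences are morphisms of mixed Hodge structures (Theorem \ref{deligne}(2)), hence strictly compatible with the weight filtration.

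First I would set up the computation by induction on the number $k$ of subspaces, using a Mayer--Vietoris / Gysin argument. Removing one more hyperplane-type stratum from an open complement $U$ fits into a long exact sequence relating $H^*(U)$, $H^*(U - H_k)$, and the cohomology (with a Tate twist and degree shift) of $U \cap H_k$, which is again the complement of a good arrangement, now inside a space of codimension a multiple of $d$. Concretely, the deleted-neighborhood/tubular-neighborhood argument around $H_k$ (a codimension-$d$ subspace) produces a Gysin sequence
\[
\cdots \to H^{m-2d}(U\cap H_k;\QQ)(-d) \to H^m(U;\QQ) \to H^m(U - H_k;\QQ) \to H^{m-2d+1}(U\cap H_k;\QQ)(-d) \to \cdots
\]
in which the $(-d)$ indicates a Tate twist raising weights by $2d$. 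The inductive hypothesis gives purity of weight $\tfrac{2d}{2d-1}$ times the degree for $H^*(U\cap H_k)$ and $H^*(U-H_k)$; then a weight count forces $H^m(U;\QQ)$ to be pure of weight $\tfrac{2d}{2d-1}m$ as well, because the source and target of the two maps into and out of $H^m(U)$ are pure of weights $\tfrac{2d}{2d-1}m$ (the twisted term: $(m-2d)\cdot\tfrac{2d}{2d-1} + 2d = \tfrac{2d}{2d-1}m$, the untwisted term: $\tfrac{2d}{2d-1}m$) and strictness of the maps does the rest. The base case is $\Co^n$ itself, which is $1$-pure and trivially $\tfrac{2d}{2d-1}$-pure in the relevant range (only $H^0$ is nonzero).

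Alternatively, and perhaps more cleanly, I would run the spectral sequence of the arrangement directly: there is a spectral sequence with $E_1$-page assembled from the cohomology of the closed strata $D^{(s)} = \sqcup \bigl(\cap_{i\in J}H_i\bigr)$ over $|J|=s$, each of which is a linear subspace (hence contractible, or at worst a product of smaller arrangement complements after further deletion) of codimension a multiple of $d$, contributing in cohomological degree shifted by $2s\cdot(\text{something involving }d)$ with a corresponding Tate twist. The point of the good-arrangement hypothesis is exactly that every such intersection has codimension $\equiv 0 \pmod d$, so that the twists that appear are always by multiples of $d$, and every class in $H^m(X)$ that survives to $E_\infty$ picks up weight exactly $\tfrac{2d}{2d-1}m$. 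Because the differentials are morphisms of mixed Hodge structures and the extensions are strict, purity of the $E_1$ (and hence $E_\infty$) entries transfers to $H^*(X;\QQ)$.

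The main obstacle is bookkeeping the degree shifts and Tate twists so that the arithmetic $\tfrac{2d}{2d-1}$ comes out on the nose — in particular verifying that a codimension-$md$ intersection, which contributes in cohomological degree $\approx (2d-1)m$ via its link (an $S^{2md-1}$-type sphere bundle) and carries a weight-$2md$ Tate twist, indeed satisfies $2md = \tfrac{2d}{2d-1}\cdot(2d-1)m$, and that these match consistently across all strata and all differentials. One must also be a little careful that the intermediate spaces appearing in the induction really are complements of good arrangements (so the inductive hypothesis applies): intersecting a good arrangement in $\Co^n$ with one of its members $H_k$ yields the collection $\{H_i\cap H_k\}_{i\neq k}$ inside $H_k\cong\Co^{n-d}$, and the codimensions of its intersections are the codimensions in $\Co^n$ of the corresponding $\cap_{i\in J}H_i \cap H_k$, which remain multiples of $d$ by hypothesis — so the class of good arrangements is closed under this operation, and the induction is legitimate. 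Once the weight arithmetic is pinned down, everything else is a formal consequence of strictness of morphisms of mixed Hodge structures.
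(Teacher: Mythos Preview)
The paper does not actually prove this proposition; it simply refers to \cite[Proposition 8.6]{CH20}. Your Gysin-sequence induction on the number of subspaces is a sound strategy and does yield a proof, but as written it contains a logical inversion and an arithmetic slip that need to be fixed.

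You state that the inductive hypothesis gives purity of $H^*(U\cap H_k)$ and of $H^*(U-H_k)$ and then deduce purity of $H^m(U)$. This is backwards: $U-H_k$ is the complement of all $k$ subspaces and is precisely what you are trying to establish; the inductive hypothesis applies to $U$ (complement of $k-1$ subspaces in $\Co^n$) and to $Z=U\cap H_k$ (complement of at most $k-1$ subspaces in $H_k\cong\Co^{n-d}$). Relatedly, your weight computation $(m-2d)\cdot\tfrac{2d}{2d-1}+2d$ equals $\tfrac{2d(m-1)}{2d-1}$, not $\tfrac{2dm}{2d-1}$. This discrepancy is in fact what makes the argument work: it shows the Gysin pushforward $H^{m-2d}(Z)(-d)\to H^m(U)$ is a morphism between pure Hodge structures of distinct weights, hence zero. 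The long exact sequence therefore breaks into short exact sequences
\[
0\longrightarrow H^m(U)\longrightarrow H^m(U-H_k)\longrightarrow H^{m+1-2d}(Z)(-d)\longrightarrow 0,
\]
and it is \emph{this} twisted term whose weight is $(m+1-2d)\cdot\tfrac{2d}{2d-1}+2d=\tfrac{2dm}{2d-1}$, matching $H^m(U)$. Exactness of $\mathrm{Gr}^W$ on mixed Hodge structures then gives purity of the middle term.

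One further point you gloss over: to apply the inductive hypothesis to $Z$ you need $\{H_i\cap H_k\}_{i\neq k}$ to be a good arrangement of codimension $d$ (not merely codimension a multiple of $d$) subspaces of $H_k$. This holds because $\mathrm{codim}_{\Co^n}(H_i\cap H_k)\leq\mathrm{codim}(H_i)+\mathrm{codim}(H_k)=2d$, so the good-arrangement hypothesis forces $\mathrm{codim}_{\Co^n}(H_i\cap H_k)=2d$ exactly, hence codimension $d$ in $H_k$. With these corrections in place your induction goes through cleanly.
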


\begin{proof}
See \cite[Proposition 8.6]{CH20}. Note that the definition of good arrangement of codimension $d$ subspaces used in \cite{CH20} is not quite the same as the one above but it is easy to check that the proof of \cite[Proposition 8.6]{CH20} applies with the above definition.
\end{proof}

\begin{remark}
 Proposition \ref{arrangements} holds for any hyperplane arrangement, since Definition \ref{defi : good arrangement} is automatically satisfied in codimension 1. In that case, Proposition \ref{arrangements} reduces to the main result of Kim's paper \cite{Kim94}. In the higher codimension case, the above proposition follows from the more general result proved in \cite[Example 1.14]{delignealgebre}.
\end{remark}

\begin{example}
Consider the space of ordered $n$-configuration points in $\Co^d$ denoted $\mathrm{Conf}_n\left( \Co^d \right)$. Let $(i,j)$ be an unordered pair of distinct elements in $\{1, \cdots, n\}$, and consider the diagonal
\[\Delta_{i,j} = \lbrace (x_1, \dots, x_{n}) \in ( \Co^d )^n, x_i = x_j\rbrace \ .\]
The collection $\{\Delta_{i,j}\}_{(i,j)}$ of codimension $d$ subspaces of $\left( \Co^d \right)^n$ is easily seen to be a good arrangement and the complement $$ ( \Co^d )^n - \cup_{(i,j)} \Delta_{i,j}$$ is exactly $\mathrm{Conf}_n\left( \Co^d \right)$. Using the previous proposition, one can conclude that this space is $2d/(2d -1)\text{-}$pure. 
\end{example}

\subsection{Formality of the singular chains functor} Let $\mathrm{Var}_{\Co}^{\alpha\text{-}pure}$ denotes the full subcategory of $\mathrm{Var}_{\Co}$ whose objects are varieties that are $\alpha\text{-}$pure.

\begin{theorem}[{\cite[Theorem~7.3]{CH20}}]\label{purity implies formality}
Let $\alpha$ be a non-zero rational number. The singular chains functor
\[C_*( -; \QQ) : \mathrm{Var}_{\Co}^{\alpha\text{-}pure} \to \ch_*(\QQ)\]
is formal as lax symmetric monoidal functor.
\end{theorem}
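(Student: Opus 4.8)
The plan is to factor the singular chains functor, up to weak equivalence, through the subcategory of chain complexes with pure homology, and then to apply Proposition \ref{forget général}. All the homotopy coherence is handled in the $\infty$-categorical language introduced above, and Corollary \ref{infinity functors} is used only at the very end to descend from symmetric monoidal $\infty$-functors to ordinary ones.

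The first, and only genuinely technical, step is to lift singular chains to mixed Hodge structures. Following Navarro Aznar and \cite{CH20}, one constructs a lax symmetric monoidal $\infty$-functor
\[
\mathbf{R} : \N(\mathrm{Var}_{\Co}) \longrightarrow \mathbf{Ch}_*(\mathrm{MHS}_{\QQ}),
\]
where $\mathbf{Ch}_*(\mathrm{MHS}_{\QQ})$ denotes the $\infty$-category obtained from bounded chain complexes of mixed Hodge structures by inverting quasi-isomorphisms (replacing $\mathrm{MHS}_{\QQ}$ by its Ind-completion if one wishes the constructions of the previous section to apply literally), with the following two features. First, composing $\mathbf{R}$ with the functor forgetting the Hodge structure recovers, up to equivalence of lax symmetric monoidal $\infty$-functors, the functor induced by $C_*(-;\QQ)$. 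Second, $H_n\mathbf{R}(X)$ is the rational homology $H_n(X;\QQ)$ equipped with the mixed Hodge structure dual to the one of Theorem \ref{deligne}. Restricting along $\mathrm{Var}_{\Co}^{\alpha\text{-}pure}\hookrightarrow\mathrm{Var}_{\Co}$ yields $\mathbf{R}^{\alpha}$, which still refines singular chains.

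Now I would split the weight filtration and read off purity. By Theorem \ref{Splitting sur Q}, the associated-graded functor $G : \mathrm{MHS}_{\QQ}\to\mathrm{grVect}_{\QQ}$, $V\mapsto\bigoplus_p Gr^W_p V$, is strong symmetric monoidal, and it is exact because morphisms of mixed Hodge structures are strictly compatible with $W$; hence it induces a strong symmetric monoidal $\infty$-functor $\mathbf{G}:\mathbf{Ch}_*(\mathrm{MHS}_{\QQ})\to\mathbf{Ch}_*(\mathrm{grVect}_{\QQ})$ that commutes with homology. Moreover $T\circ G$ is naturally isomorphic, as a symmetric monoidal functor, to the forgetful functor $\mathrm{MHS}_{\QQ}\to\mathrm{FVect}_{\QQ}$, so forgetting the weight grading after $\mathbf{G}$ recovers the underlying complex; together with the first feature of $\mathbf{R}$, this shows the composite $\mathbf{U}\circ\mathbf{G}\circ\mathbf{R}^{\alpha}$ is equivalent to $C_*(-;\QQ)$, where $\mathbf{U}$ forgets the weight grading. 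Finally, purity enters: for an $\alpha$-pure variety $X$, the Hodge structure $H^n(X;\QQ)$ is pure of weight $\alpha n$ (and zero when $\alpha n\notin\ZZ$), hence $H_n(X;\QQ)$ is pure of weight $-\alpha n$, so that
\[
H_n\big(\mathbf{G}\circ\mathbf{R}^{\alpha}(X)\big)^p \;=\; Gr^W_p H_n(X;\QQ) \;=\; 0 \qquad\text{for } p\neq -\alpha n .
\]
Thus $\mathbf{G}\circ\mathbf{R}^{\alpha}$ factors, up to equivalence, through the full $\infty$-subcategory of $\mathbf{Ch}_*(\mathrm{grVect}_{\QQ})$ on objects with $(-\alpha)$-pure homology; over $\QQ$ this subcategory is precisely the $\infty$-categorical localization of $\ch_*(\mathrm{grVect}_{\QQ})^{(-\alpha)\text{-}pure}$ at quasi-isomorphisms, since every chain complex of graded vector spaces is quasi-isomorphic to its homology.

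To finish, note that $-\alpha\neq 0$, so Proposition \ref{forget général} provides a formal lax symmetric monoidal functor $\ch_*(\mathrm{grVect}_{\QQ})^{(-\alpha)\text{-}pure}\to\ch_*(\QQ)$; by the remark following Definition \ref{formal infinity functors} the induced $\infty$-functor $\mathbf{U}$ is formal, i.e.\ $\mathbf{U}\simeq H\circ\mathbf{U}$ as lax symmetric monoidal $\infty$-functors. Composing with $\mathbf{G}\circ\mathbf{R}^{\alpha}$ and using the previous paragraph gives
\[
C_*(-;\QQ) \;\simeq\; \mathbf{U}\circ\mathbf{G}\circ\mathbf{R}^{\alpha} \;\simeq\; H\circ\mathbf{U}\circ\mathbf{G}\circ\mathbf{R}^{\alpha} \;\simeq\; H\circ C_*(-;\QQ)
\]
as lax symmetric monoidal $\infty$-functors $\N(\mathrm{Var}_{\Co}^{\alpha\text{-}pure})\to\mathbf{Ch}_*(\QQ)$; replacing $\mathrm{Var}_{\Co}^{\alpha\text{-}pure}$ by an essentially small skeleton, Corollary \ref{infinity functors} then upgrades this to formality of $C_*(-;\QQ)$ as an ordinary lax symmetric monoidal functor, as claimed. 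The one hard step is the construction of $\mathbf{R}$: refining singular chains together with their Künneth maps to a lax symmetric monoidal functor with values in chain complexes of mixed Hodge structures, which rests on Navarro Aznar's theory of mixed Hodge complexes and cubical hyperresolutions; in keeping with the survey nature of these notes I would import it from \cite{CH20} rather than reprove it.
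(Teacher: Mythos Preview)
Your argument is correct and follows essentially the same route as the paper: lift singular chains to chain complexes of mixed Hodge structures, split the weight filtration via Theorem~\ref{Splitting sur Q}, and then invoke Proposition~\ref{forget général} on the $\alpha$-pure subcategory before descending with Corollary~\ref{infinity functors}. The only packaging difference is that the paper makes the passage through mixed Hodge complexes and the Beilinson--Drew equivalence $\mathbf{Ch}_*(\mathrm{MHS}_{\QQ})\simeq\mathbf{MHC}_{\QQ}$ explicit, whereas you absorb this into the existence of your functor $\mathbf{R}$; your care with the sign $-\alpha$ when dualizing to homology is a detail the paper leaves implicit.
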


\begin{proof}[Ideas of the proof]
By Corollary \ref{infinity functors}, it suffices to prove that this functor is formal as an $\infty$-lax symmetric monoidal functor. The main ingredient of the proof is that there exists a formal functor
\[\mathcal{D}_*(-)_{\QQ} : \N(\mathrm{Var}_{\Co})^{\times} \to \mathbf{Ch}_*(\QQ)^{\otimes}\]
which is weakly equivalent to $C_*( -; \QQ)$ in the category of strong symmetric monoidal $\infty$-functors. The construction of this functor involves the notion of mixed Hodge complexes introduced by Deligne in \cite{Deligne74}. 

\begin{definition}[Mixed Hodge complex]
 A \emph{mixed Hodge complex} over $\mathbb{Q}$ is the data of
 \begin{itemize}
 	\item[$\centerdot$]  a filtered chain complex $(K_{\mathbb{Q}}, W)$ over $\mathbb{Q}$;
 	\item[$\centerdot$] a bifiltered chain complex $(K_{\Co}, W, F)$ over $\Co$;
 	\item[$\centerdot$] a finite string of filtered quasi-isomorphisms of filtered complexes of $\Co$-vector spaces \[\quad \quad (K_{\mathbb{Q}}, W) \otimes \Co \overset{\alpha_1}{\longrightarrow} (K_1, W) \overset{\alpha_2}{\longleftarrow} \cdots \overset{\alpha_{l-1}}{\longrightarrow} (K_{l-1}, W)  \overset{\alpha_{l}}{\longrightarrow} (K_{\Co}, W).\]
 \end{itemize}
We call $l$ the \emph{length} of the mixed Hodge complex. The following axioms must furthermore be satisfied: 
\begin{itemize}
	\item[($\mathrm{MH}_0$)] The homology $H_*(K_{\mathbb{Q}})$ is bounded and finite-dimensional. \smallskip
	\item[($\mathrm{MH}_1$)] The differential of $Gr^p_W K_{\Co}$ is strictly compatible with $F$. \smallskip
	\item[($\mathrm{MH}_2$)] The filtration on $H_n\left(Gr^p_W K_{\Co}\right)$ induced by $F$ makes $H_n\left(Gr^p_W K_{\mathbb{Q}}\right)$ into a pure Hodge structure of weight $p +n$. 
\end{itemize}
Morphisms of mixed Hodge complexes are given by levelwise bifiltered morphisms of complexes making the corresponding diagrams commute. We denote by $\mathrm{MHC}_{\mathbb{Q}}$ the category of mixed Hodge complexes of a certain fixed length, omitted in the notation. We can view it as a symmetric monoidal category, with the filtered variant of the Künneth formula, since the tensor product of mixed Hodge complexes is again a mixed Hodge complex. 
\end{definition}

Denote by $\mathbf{MHC}_{\mathbb{Q}}$ the $\infty$-category obtained by inverting weak equivalences of mixed Hodge complexes. It can be equipped with a structure of symmetric monoidal $\infty$-category (see \cite{Drew15}). Beilinson gave an equivalence of categories between the derived category of mixed Hodge structures and the homotopy category of shifted version of mixed Hodge complexes. This equivalence can be lifted at the level of symmetric monoidal $\infty$-categories under the form of the following theorem originally due to Drew (see \cite{Drew15}).

\begin{theorem}[{\cite[Theorem~5.4.]{CH20}}]\label{equivalence sombre}
There exists an equivalence of symmetric monoidal $\infty$-categories $\mathbf{Ch}_*(\mathrm{MHS}_{\mathbb{Q}})^{\otimes} \to \mathbf{MHC}_{\mathbb{Q}}^{\otimes}.$
\end{theorem}  

The functor $\mathcal{D}_*(-)_{\QQ}$ is then obtained as the pre-composition of the forgetful functor $\mathbf{MHC}_{\mathbf{\QQ}}^{\otimes} \to \mathbf{Ch}_*(\QQ)^{\otimes}$ by a symmetric monoidal functor \[\mathcal{D}_* : \N(\mathrm{Var}_{\Co})^{\times} \to \mathbf{MHC}_{\QQ}^{\otimes} \ .\] In the case of smooth varieties, it suffices to take a functorial mixed Hodge complex model for the cochains as constructed for instance in \cite{NavarroAznar87} and dualize it, see \cite[Section 6.1]{CH20}. Once one has constructed this functor for smooth varieties, it can be extended to more general varieties by standard descent arguments, see \cite[Theorem 6.7]{CH20}. Finally, one can show that the functor $\mathcal{D}_*(-)_{\QQ}$ is formal as follows. Let $\mathcal{E}$ be a strong monoidal inverse of the equivalence of Theorem \ref{equivalence sombre}. Consider the composite \[ \mathcal{E} \circ \mathcal{D}_* : \N(\mathrm{Var}_{\Co})^{\times} \to  \mathbf{Ch}_*(\mathrm{MHS}_{\mathbf{\QQ}})^{\otimes}.\]
Using Theorem \ref{equivalence sombre}, the functor $\mathcal{D}_*(-)_{\QQ}$ is weakly equivalent to $U \circ \mathcal{E} \circ \mathcal{D}_* $ where $U$ is the forgetful functor \[U : \mathrm{MHS}_{\mathbf{\QQ}} \to \mathrm{Vect}_{\mathbf{\QQ}} \ .\] The restriction of $\mathcal{E} \circ \mathcal{D}_*$ to $\mathrm{Var}_{\Co}^{\alpha \mbox{-}pure}$ lands in $\mathbf{Ch}_*(\mathrm{MHS}_{\QQ})^{\alpha \mbox{-}pure}$, the full subcategory of $\mathbf{Ch}_*(\mathrm{MHS}_{\QQ})$ spanned by chain complexes whose homology is $\alpha$-pure. However, Theorem \ref{Splitting sur Q} and Theorem \ref{forget général} imply that the restriction of the functor
\[U : \mathbf{Ch}_*(\mathrm{MHS}_{\QQ})^{\otimes} \to \mathbf{Ch}_*(\QQ)^{\otimes}\] 
to $\mathbf{Ch}_*(\mathrm{MHS}_{\QQ})^{\alpha \mbox{-}pure} $ is formal. Proposition \ref{composition} thus implies that $U \circ \mathcal{E} \circ \mathcal{D}_* $ is formal, so does $\mathcal{D}_*(-)_{\QQ}$.  
\end{proof}

\begin{example}[Noncommutative little disks operad]
There is a topological model of the noncommutative little disks operad and non-commutative framed little disks operad introduced in \cite{DotsenkoShadrinVallette15}. The operads at stake are two non-symmetric topological operads \[\mathcal{A}s_{\mathbb{S}^1} \quad \mbox{and} \quad \mathcal{A}s_{\mathbb{S}^1} \rtimes \mathbb{S}^1 \] that are given in each arity by a product of copies of $\Co^*$. Using Example \ref{examples of weight computations} and K\"unneth formula, one shows that such a product is $2$-pure. Theorem \ref{purity implies formality} implies that the operads $C_*(\mathcal{A}s_{\mathbb{S}^1}; \QQ)$ and $C_*(\mathcal{A}s_{\mathbb{S}^1} \rtimes \mathbb{S}^1; \QQ)$ are formal.  
\end{example}

\begin{example}[Cazanave's monoid]
We denote by $F_d$ the algebraic variety of degree $d$ algebraic maps from $\Co \mathbb{P}^1$ to itself that send the point $\infty$ to $1$. An element in this variety can be seen as a pair $(f,g)$ of degree $d$ monic polynomials without any common roots. One can show that the weight filtration on $H^*(F_d; \QQ)$ is $2$-pure, see \cite[Proposition~7.6.]{CH20}. In \cite[Proposition~3.1.]{Cazanave12}, Cazanave shows that the variety $\sqcup_d F_d$ has the structure of a graded monoid which is algebraic and compatible in a homotopical sense with the loop space structure on $\mathrm{Map}_*(\mathbb{S}^2, \mathbb{S}^2)$. This implies that the graded monoid in chain complexes $\bigoplus_{d} C_*(F_d; \QQ)$ is formal. 
\end{example}

\begin{example}[Vaintrob]
There is an operad in log schemes whose complex points recover the (framed) little disks operad, see \cite{vaintrobmoduli}. Log schemes have a Hodge structure on their cohomology and we can lift it at the level of chains. Using this, we can deduce the formality of the chain complex associated to (framed) little disks operad. This approach is developed in \cite{vaintrobformality}. 
\end{example}

\subsection{Formality of Sullivan's polynomial forms functor}

The following is inspired by \cite[Section~8.]{CH20}. Recall the functor of polynomial forms \[\Omega^*_{PL}:\mathrm{sSet}\to \mathrm{CDGA}^{op}\] introduced in Section \ref{Key}. Using this functor, one can obtain a contravariant version of Theorem \ref{purity implies formality} as follows.

\begin{theorem}[{\cite[Theorem~8.1.]{CH20}}]\label{sullivan}
Let $\alpha$ be a non-zero rational number. Sullivan's polynomial forms functor  $$\Omega^*_{PL}: (\mathrm{Var}_{\Co})^{op} \to \ch_*(\QQ)$$ is formal as lax symmetric monoidal functor when restricted to varieties whose weight filtration in cohomology is $\alpha \text{-}$pure.  
\end{theorem}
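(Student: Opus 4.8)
The plan is to mirror the proof of Theorem \ref{purity implies formality}, replacing the covariant chains functor by its contravariant cochain analogue. First I would invoke Corollary \ref{infinity functors}: since $\QQ$ has characteristic zero, it suffices to prove that $\Omega^*_{PL}$ is formal as a lax symmetric monoidal $\infty$-functor $\N((\mathrm{Var}_{\Co})^{op}) \to \mathbf{Ch}_*(\QQ)$, where we regard $\Omega^*_{PL}$ as valued in cochain complexes of $\QQ$-vector spaces. The key input is the existence of a functorial mixed Hodge complex model for the cochains of a complex variety. Concretely, by the constructions of Navarro Aznar \cite{NavarroAznar87} (for smooth varieties) extended by descent (see \cite[Section 6]{CH20}), there is a symmetric monoidal functor
\[\mathcal{C}^* : \N(\mathrm{Var}_{\Co}^{op})^{\times} \to \mathbf{MHC}_{\QQ}^{\otimes}\]
whose composite with the forgetful functor $\mathbf{MHC}_{\QQ}^{\otimes} \to \mathbf{Ch}_*(\QQ)^{\otimes}$ is weakly equivalent to $\Omega^*_{PL}$ (both compute the rational cohomology with its multiplicative structure). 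This is the contravariant counterpart of the functor $\mathcal{D}_*(-)_{\QQ}$ appearing in the proof of Theorem \ref{purity implies formality}.

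Next I would transport along the equivalence of Theorem \ref{equivalence sombre}: let $\mathcal{E}$ be a strong symmetric monoidal inverse to the equivalence $\mathbf{Ch}_*(\mathrm{MHS}_{\QQ})^{\otimes} \to \mathbf{MHC}_{\QQ}^{\otimes}$, and form the composite $\mathcal{E} \circ \mathcal{C}^* : \N(\mathrm{Var}_{\Co}^{op})^{\times} \to \mathbf{Ch}_*(\mathrm{MHS}_{\QQ})^{\otimes}$. Then $\Omega^*_{PL}$ is weakly equivalent to $U \circ \mathcal{E} \circ \mathcal{C}^*$, where $U : \mathrm{MHS}_{\QQ} \to \mathrm{Vect}_{\QQ}$ is the underlying vector space functor. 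The crucial observation is that when we restrict to the subcategory of varieties whose weight filtration on cohomology is $\alpha$-pure, the functor $\mathcal{E} \circ \mathcal{C}^*$ lands in $\mathbf{Ch}_*(\mathrm{MHS}_{\QQ})^{\alpha\text{-}pure}$: here one must be careful about the sign/shift convention relating the homological $\alpha$-purity (meaning $H_n(V)^p = 0$ for $p \neq \alpha n$) to the cohomological statement that $H^k(X;\QQ)$ is pure of weight $\alpha k$, but this is exactly the same bookkeeping as in the covariant case with a cohomological regrading. Then Theorem \ref{Splitting sur Q} together with Theorem \ref{forget général} shows that the restriction of $U : \mathbf{Ch}_*(\mathrm{MHS}_{\QQ})^{\otimes} \to \mathbf{Ch}_*(\QQ)^{\otimes}$ to $\mathbf{Ch}_*(\mathrm{MHS}_{\QQ})^{\alpha\text{-}pure}$ is formal, and Proposition \ref{composition} gives that $U \circ \mathcal{E} \circ \mathcal{C}^*$ is formal; hence so is $\Omega^*_{PL}$ on the $\alpha$-pure subcategory.

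The main obstacle — and the only place where a genuinely new verification is needed beyond citing the covariant case — is the construction and functoriality of the contravariant mixed Hodge complex functor $\mathcal{C}^*$ as a \emph{symmetric monoidal} $\infty$-functor out of the \emph{nerve} of $\mathrm{Var}_{\Co}^{op}$, and checking that its underlying object is $\Omega^*_{PL}$ rather than merely some quasi-isomorphic model of rational cochains. On the level of chains this was handled in \cite[Section 6]{CH20}, and the point-set multiplicative functoriality of Navarro Aznar's Thom--Whitney construction provides a strictly functorial CDGA model with a Hodge-complex refinement; one needs to know that this model agrees with $\Omega^*_{PL}$ in the homotopy category of CDGAs (equivalently, in $\mathbf{Ch}_*(\QQ)$ after forgetting the algebra structure), which follows from the uniqueness of the rational homotopy type and the comparison between piecewise-polynomial and Thom--Whitney forms. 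Once this identification is in place, every remaining step is a formal consequence of results already established in the excerpt.
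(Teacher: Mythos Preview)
Your proposal is correct and follows exactly the approach the paper indicates: the paper's own proof consists of the single sentence ``The proof is almost the same as the proof of Theorem~\ref{purity implies formality}, using a contravariant version of the functor $\mathcal{D}_*$,'' and you have simply unwound what that sentence means. Your discussion of the one nontrivial verification --- constructing the contravariant mixed Hodge complex functor $\mathcal{C}^*$ as a symmetric monoidal $\infty$-functor and identifying its underlying object with $\Omega^*_{PL}$ --- is more explicit than the paper's account and correctly locates where the new work lies relative to the covariant case.
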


\begin{proof}
The proof is almost the same as the proof of Theorem \ref{purity implies formality}, using a contravariant version of the functor $\mathcal{D}_*$. 
\end{proof}

\begin{example}
	Theorem \ref{sullivan} and Proposition \ref{arrangements} imply that complements of good codimension $d$ arrangements are formal over $\QQ$ (in the sense of Definition \ref{definition : formality of spaces}). The Deligne-Griffiths-Morgan-Sullivan theorem on formality of compact Kähler manifolds is strongly related to the case $\alpha=1$ of the previous theorem. Indeed compact K\"ahler manifolds include smooth and projective algebraic varieties to which the above theorem applies.
\end{example}

\subsection{Formality of Hopf cooperads}
Given a topological group $G$, the graded vector space $H^*(G; \QQ)$ is a Hopf algebra in which the multiplication comes from the diagonal of $G$ and the comultiplication comes from the multiplication of $G$. With this structure on cohomology, one may be interested in the formality of $C^*(G; \QQ)$ as a Hopf algebra. One has to deal with the issue that the multiplication in the Hopf algebra structure at the cochains level is not strictly commutative. On the other hand, if we consider $\Omega^*_{PL}(G)$, the multiplication is strictly commutative but the comultiplication is only coassociative up to homotopy. A similar problem also arises with operads in spaces, the cohomology of an operad in spaces is a Hopf cooperad (see definition below) but the formality as a Hopf cooperad is not so easy to define since the structure is not strict at the cochain level. The purpose of this subsection is to give a framework for studying this question.

\begin{definition}[Hopf cooperad]\label{defi : Hopf cooperad}
A \emph{Hopf cooperad} over a field $\mathbf{k}$ is an operad in the symmetric monoidal category \[(\mathrm{CDGA}_{\mathbf{k}}^{op},\otimes) \ .\]   If we unravel this definition, a Hopf cooperad is a collection of $\mathrm{CDGA}$s indexed by the positive integers $\{A(n)\}_{n\in\mathbb{N}}$ together with 
\begin{enumerate}
	\item a map $A(1)\to\mathbf{k}$,
	\item a symmetric group action of $\mathbb{S}_n$ on $A(n)$ for each $n$,
	\item maps of CDGAs
	\[\circ_i:A(m+n-1)\to A(m)\otimes A(n)\]
	defined for each integer $i\in\{1,\cdots,m\}$.
\end{enumerate}
satisfying the dual axioms of those of an operad. 
\end{definition}

We can make sense of this definition for more general algebraic structures. To do so, we will introduce the language of algebraic theories.

\begin{definition}
An \emph{algebraic theory} is a small category $T$ with finite products. For $\C$ a category with finite products, a \emph{$T$-algebra} in $\C$ is a finite product preserving functor $T \to \C$. The category of $T$-algebras is the category whose objects are $T$-algebras and whose morphisms are natural transformation of functors.
\end{definition}

\begin{example} \leavevmode
\begin{enumerate}
\item Let $\mathrm{FFGrp}$ be the full sub-category of $\mathrm{Grp}$ spanned by free groups on a finite set of generators. Then $\mathrm{FFGrp}^{op}$ is an algebraic theory. It is an instructive exercise to check that the category of $T$-algebras is equivalent to the category of groups. One side of this equivalence is given by the functor
\[\mathrm{Grp}\to\mathrm{Alg}_T\]
sending $G$ to the functor $F\mapsto \mathrm{Hom}(F,G)$.
\item Similarly, there exist algebraic theories for which the $T$-algebras are monoids, abelian groups, rings, operads, cyclic operads, modular operads etc. They formally look very similar to the previous example. One simply takes the opposite of the category of free objects on finitely many generators.
\end{enumerate}
\end{example}

\begin{definition}[Hopf $T$-coalgebras]
Let $T$ be an algebraic theory and $\mathbf{k}$ be a field. Then the \emph{category of dg Hopf $T$-coalgebras} over $\mathbf{k}$ is the opposite of the category of finite product preserving functors from $T$ to the category $\mathrm{CDGA}^{op}$.
\end{definition}

\begin{remark}
Since the cartesian product in $\mathrm{CDGA}^{op}$ is the coproduct in $\mathrm{CDGA}$ and is simply given by the tensor product, it is clear that the above definition generalizes Definition \ref{defi : Hopf cooperad}.
\end{remark}

\begin{definition}[Weak Hopf $T$-coalgebras]
Let $T$ be an algebraic theory and $\C$ a category with products and with a notion of weak equivalences (e.g. a model category). A \emph{weak T-algebra} in $\C$ is a functor $F : T \to \C$ such that for each pair $(s,t)$ of objects of $T$, the canonical map \[F(t \times s) \to F(t) \times F(s)\] is a weak equivalence. In particular, if $\C$ is the category $\mathrm{CDGA}^{op}$, we call these objects \emph{weak dg Hopf $T$-coalgebras.}
\end{definition}

\begin{remark}\label{rigid}
There are rigidification results due to Bernard Badzioch and Julie Bergner (see \cite{Bad02,Ber06}) that imply that for algebraic theory $T$, we have an equivalence of homotopy categories
\[\mathrm{Ho}(\mbox{weak \emph{T}-algebras in }\mathrm{sSet}) \cong \mathrm{Ho}(\mbox{\emph{T}-algebras in }\mathrm{sSet}).\]
This is for example true for group, monoids, operads, cyclic operads.
\end{remark}

\begin{example}
If $X : T \to \mathrm{sSet}$ is a $T$-algebra (or even a weak $T$-algebra), then $\Omega_{PL}^*(X)$ is a weak $T$-algebra in $\mathrm{CDGA}^{op}$. 
\end{example}

\begin{theorem}[{\cite[Theorem~8.18]{CH20}}]
Let $T$ be an algebraic theory and let \[X : T \to \mathrm{Var}_{\Co}\] be a $T$-algebra such that for all $t \in T$, the weight filtration on the cohomology of $X(t)$ is $\alpha \text{-}$pure, for $\alpha \in \QQ^{\times}$. The weak dg Hopf T-coalgebra $\Omega^*_{PL}(X)$ is formal. 
\end{theorem}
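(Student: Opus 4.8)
The plan is to promote the $\infty$-categorical formality argument of Theorem \ref{purity implies formality} from the category $\ch_*(\QQ)$ to the category of weak dg Hopf $T$-coalgebras, or rather to reduce the Hopf-cooperadic statement to the already-established functorial formality of the chains. First I would dualize and replace $\Omega^*_{PL}(X)$ by the chains functor: since $X : T \to \mathrm{Var}_{\Co}$ is a $T$-algebra landing in $\alpha$-pure varieties, the composite $t \mapsto C_*(X(t);\QQ)$ is a finite-product-preserving (up to the Künneth quasi-isomorphism, i.e.\ weak) functor $T \to \mathbf{Ch}_*(\QQ)$, and Theorem \ref{purity implies formality} gives us that the underlying symmetric monoidal functor $C_*(-;\QQ)|_{\mathrm{Var}_{\Co}^{\alpha\text{-}pure}}$ is formal as a lax symmetric monoidal functor, hence equivalent to $H_*(-;\QQ)$. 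The point is that the formality established there is \emph{functorial and monoidal}, so composing with $X$ preserves the $T$-coalgebra structure through the equivalence.

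The key steps, in order, are: (1) record that a $T$-algebra $X : T \to \mathrm{Var}_{\Co}$ together with a monoidal functor $C_*(-;\QQ) : \mathrm{Var}_{\Co}^{\otimes} \to \mathbf{Ch}_*(\QQ)^{\otimes}$ produces a weak dg Hopf $T$-coalgebra, because product-preservation of $X$ and lax monoidality of $C_*$ combine to the Segal/weakness condition; (2) reinterpret ``formality of a weak dg Hopf $T$-coalgebra'' as ``equivalence, in the $\infty$-category of weak $T$-algebras in $\mathbf{Ch}_*(\QQ)^{\otimes}$, between $C_* \circ X$ and $H_* \circ C_* \circ X$'' — here one uses the rigidification results of Remark \ref{rigid} so that weak $T$-algebras and strict $T$-algebras have equivalent homotopy theory, which is what lets the monoidal zig-zag be interpreted as a zig-zag of weak $T$-coalgebras; (3) factor $C_*(-;\QQ)|_{\mathrm{Var}_{\Co}^{\alpha\text{-}pure}}$ through $\mathbf{Ch}_*(\mathrm{MHS}_{\QQ})^{\alpha\text{-}pure}$ via the functor $\mathcal{E} \circ \mathcal{D}_*$ from the proof of Theorem \ref{purity implies formality}, observe that $\mathcal{E} \circ \mathcal{D}_* \circ X$ is then a weak $T$-algebra in $\mathbf{Ch}_*(\mathrm{MHS}_{\QQ})^{\otimes}$, and apply the splitting of Theorem \ref{Splitting sur Q} together with Proposition \ref{forget général}: the forgetful functor $U : \mathbf{Ch}_*(\mathrm{MHS}_{\QQ})^{\alpha\text{-}pure} \to \mathbf{Ch}_*(\QQ)$ is formal \emph{as a lax symmetric monoidal functor}, so post-composition carries the $T$-coalgebra $\mathcal{E}\circ\mathcal{D}_*\circ X$ to a formal weak dg Hopf $T$-coalgebra; (4) conclude via Proposition \ref{composition} and Corollary \ref{infinity functors}, transporting back along Theorem \ref{equivalence sombre} and the rigidification equivalence, that $\Omega^*_{PL}(X) \simeq (\mathcal{D}_*\circ X)^{\vee}$ is formal as a weak dg Hopf $T$-coalgebra.

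The main obstacle is step (2)–(3): one needs that the \emph{monoidal} natural equivalence $F \simeq H\circ F$ underlying Theorem \ref{purity implies formality} — which a priori lives in the $\infty$-category of lax symmetric monoidal functors $\N(\mathrm{Var}_{\Co}^{\alpha\text{-}pure})^{\times}\to\mathbf{Ch}_*(\QQ)^{\otimes}$ — genuinely induces an equivalence \emph{after pre-composition with a product-preserving functor $\N(T) \to \N(\mathrm{Var}_{\Co}^{\alpha\text{-}pure})^{\times}$ and re-encoding as $T$-algebras}. This is where the lax-monoidality of $U$ (rather than just strong monoidality) is essential, since $T$-algebras in $\mathrm{CDGA}^{op}$ only see the monoidal structure, and where Badzioch–Bergner rigidification (Remark \ref{rigid}) does the work of turning a homotopy-coherent $T$-coalgebra equivalence into one that can be stated for strict/weak $T$-coalgebras. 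Everything else — the identification of $C_*\circ X$ with a weak Hopf $T$-coalgebra, and the purity hypothesis feeding into $\mathbf{Ch}_*(\mathrm{MHS}_{\QQ})^{\alpha\text{-}pure}$ — is formal bookkeeping once the functoriality in Theorem \ref{purity implies formality} is taken as a black box. I would therefore organize the write-up so that the technical heart is a single lemma: \emph{a lax symmetric monoidal natural equivalence of functors into $\mathbf{Ch}_*(\QQ)^{\otimes}$ induces an equivalence of the associated weak $T$-coalgebras}, after which the theorem follows by applying it to the zig-zag produced in the proof of Theorem \ref{purity implies formality}.
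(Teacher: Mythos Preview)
Your proposal is correct in spirit but vastly overcomplicates the argument, and contains one misapplication. The paper's proof is two lines: apply Theorem \ref{sullivan} (the \emph{contravariant} formality of $\Omega^*_{PL}$ restricted to $\alpha$-pure varieties, which you never invoke directly), then observe that being a weak dg Hopf $T$-coalgebra is a \emph{property} of a functor $T^{op} \to \mathrm{CDGA}$---namely that the K\"unneth maps are quasi-isomorphisms---and that this property is invariant under objectwise quasi-isomorphism. Hence the zig-zag of lax monoidal natural quasi-isomorphisms furnished by Theorem \ref{sullivan}, precomposed with $X$, is automatically a zig-zag of weak dg Hopf $T$-coalgebras. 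That is the entire proof.

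Your detour through chains, $\mathcal{D}_*$, and $\mathbf{Ch}_*(\mathrm{MHS}_{\QQ})$ effectively re-derives Theorem \ref{sullivan} rather than using it, and the dualization step $\Omega^*_{PL}(X) \simeq (\mathcal{D}_*\circ X)^{\vee}$ is not justified and not needed. More seriously, your invocation of Badzioch--Bergner rigidification (Remark \ref{rigid}) is misplaced: that result is stated for $\mathrm{sSet}$, not for $\mathrm{CDGA}^{op}$ or chain complexes, and plays no role in this proof (the paper only uses it in a subsequent remark about spatial realization). The ``main obstacle'' you identify in steps (2)--(3) dissolves once you recognize that no rigidification is required: since the weak $T$-coalgebra condition is a property rather than extra structure, a zig-zag of lax monoidal quasi-isomorphisms between functors automatically respects it. Your proposed final lemma is exactly this observation, and it is immediate from the definitions---not a technical hurdle requiring $\infty$-categorical machinery.
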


\begin{proof}
The result is an immediate consequence of Theorem \ref{sullivan} since being a weak $T$-coalgebra is a property of a functor $T^{op} \to \mathrm{CDGA}$ that is invariant under quasi-isomorphism. 
\end{proof}

\begin{remark}
The fact that $\Omega^*_{PL}(X)$ is formal as a weak dg Hopf $T$-coalgebra implies that the rational homotopy type of $X$ is determined by $H^*(X; \QQ)$ as a $T$-algebra in graded commutative algebras. Indeed, if we apply the derived Sullivan spatial realization functor
\[\langle - \rangle : \mathrm{CDGA}^{op} \to \mathrm{Top},\]
to a weak dg Hopf $T$-coalgebra, we are going to obtain a weak $T$-algebra in rational spaces. If $X$ is a $T$-algebra in spaces, we get a rational model for $X$ in the sense that the map \[X \longrightarrow \langle \Omega^*_{PL}(X) \rangle \] is a rational weak equivalence of weak $T$-algebras whose target is objectwise rational. Thanks to the rigidification results that hold in the case of spaces, see Remark \ref{rigid}, the weak $T$-algebra $\langle \Omega^*_{PL}(X) \rangle$ can be strictified to a strict $T$-algebra. If $X$ is formal, one also get a rational model for $X$ through \[\langle H^*(X; \QQ) \rangle \ .\]   
\end{remark}

\begin{remark}
Let us cite some related work that predates \cite{CH20}. First, Morgan constructed in \cite{morganalgebraic} an explicit small Sullivan model of smooth algebraic varieties which is equipped with a mixed Hodge structure. This model was used by Dupont in order to show that $1$ and $2$-pure smooth algebraic varieties are formal, see \cite{Dup16}. An alternative argument in the $2$-pure case due to Beilinson is explained in \cite[Proposition 3.4]{DH18}. Similar ideas are used by Chataur and Cirici in \cite{CC17} in order to prove the formality of some singular projective algebraic varieties. 
\end{remark}

\section{\textcolor{bordeau}{Galois group actions}}\label{torsion}

So far, we only considered formality problems with coefficients in a field of characteristic zero. By exploiting Galois group actions on étale cohomology rather than mixed Hodge structures, one can derive formality results with torsion coefficient. The formality results obtained in this case is however only up to a certain degree, which depends on the cardinality of the field of coefficients. In this section, we expose these results, based on \cite{CH22}. 

Let $\mathcal{A}$ be a symmetric monoidal abelian category. 

\begin{definition}
	Let $N$ be an integer. A morphism of chain complexes \[f : A \longrightarrow B \in \ch_*(\mathcal{A})\] is called a \emph{$N$-quasi-isomorphism} if the induced morphism in homology $H_i(f) : H_i(A) \to H_i(B)$ is an isomorphism for all $i \leqslant  N$.  
\end{definition}

\begin{definition}
Let $(\mathcal{C}, \otimes, \mathbf{1})$ be a symmetric monoidal category and let \[F : \mathcal{C} \to \ch_*(\mathcal{A})\] be a lax symmetric monoidal functor. The functor $F$ is said to be a \emph{$N$-formal lax symmetric monoidal} functor if there is a string of natural transformations of lax symmetric monoidal functors  $$ F \overset{\Phi_1}{\longleftarrow} F_1 \longrightarrow \cdots \longleftarrow F_n \overset{\Phi_n}{\longrightarrow} H \circ F$$ such that for all $X$ in $\mathcal{C}$, the morphisms $\Phi_i(X)$ are $N$-quasi-isomorphisms. 
\end{definition}

\begin{remark}
We can also extend this definition to a notion of $N$-formal lax symmetric monoidal $\infty$-functor for functors $\N(\C) \to \mathbf{Ch}_*(\mathcal{A})$ as in Definition \ref{formal infinity functors}. 
\end{remark}

\subsection{Some words on étale cohomology}

Our main tool to prove formality results with torsion coefficients uses a Galois group action on étale cohomology. This section recalls some basic notions around this topic. \bigskip

Étale cohomology is a particular example of sheaf cohomology. Recall that if $X$ is a Hausdorﬀ, paracompact and locally contractible topological space (this is the case if $X$ is a smooth manifold), the singular cohomology of $X$ can be computed as the sheaf cohomology with values in the constant sheaf. Let $A$ be an abelian group and denote $\underline{A}$ the constant sheaf. Then there is an isomorphism between singular cohomology and sheaf cohomology:  \[H^*_{sing}(X;A) \cong H^*_{sheaf}(X;\underline{A}) \ . \] 
In fact, this statement can be lifted at the level of cochains. If $R$ is a commutative ring, the dg algebra of singular cochains $C^*(X; R)$ is quasi-isomorphic to the sheaf-cohomology complex $\mathrm{R} \Gamma (X; \underline{R})$. This quasi-isomorphism can be chosen to be compatible with the $E_{\infty}$-algebra structures on both sides, see \cite{petersenremark} or \cite{CC22}. If $X$ is a scheme over some base field $K$, an étale cover of $X$ is a set \[\{p_i : U_i \to X\}\] of étale morphisms locally of finite type which are jointly surjective. The notion of étale morphism can be viewed as an algebro-geometric analogue of the notion of local homeomorphism in topology. Then, étale cohomology of $X$ with coefficients in an abelian group $A$ can be defined as the derived global sections of the constant sheaf with value $A$ on the \'etale site of $X$,
\[H^*_{et}(X;A) \coloneqq H^*\left(\mathrm{R}\Gamma\left(X_{et}; \underline{A}\right)\right).\]

For smooth schemes over $\Co$, étale cohomology is related to the classical cohomology thanks to the following theorem. 

\begin{theorem}[Artin]
 If $X$ is a smooth scheme over $\Co$ and $A$ is a finite abelian group, then there is an isomorphism \[H^*_{et}(X;\underline{A}) \cong H^*_{sheaf}(X_{\mathrm{an}};\underline{A})\] where $X_{\mathrm{an}}$ denotes the complex manifold underlying $X$. 
\end{theorem}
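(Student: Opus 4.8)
The plan is to construct a natural comparison morphism from étale to analytic cohomology and then prove it is an isomorphism by dévissage, the base case being the cohomology of smooth affine curves. First I would produce the map. The analytification of an étale morphism of finite type $U\to X$ is a local biholomorphism $U_{\mathrm{an}}\to X_{\mathrm{an}}$, and jointly surjective families are carried to open covers; this exhibits a continuous functor $X_{et}\to(\text{open covers of }X_{\mathrm{an}})$, hence a morphism of topoi $\epsilon$ whose pullback $\epsilon^*$ sends the constant sheaf $\underline A$ on $X_{et}$ to the constant sheaf $\underline A$ on $X_{\mathrm{an}}$. Deriving global sections of the unit $\underline A\to\mathrm R\epsilon_*\epsilon^*\underline A$ produces a natural map
\[c_X\colon H^*_{et}(X;\underline A)\longrightarrow H^*_{sheaf}(X_{\mathrm{an}};\underline A),\]
and the content of the theorem is that $c_X$ is an isomorphism for every smooth $X$ and every finite abelian group $A$.

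Next I would reduce the problem. Filtering $A$ by a composition series and using the long exact sequences on both sides reduces us to $A=\mathbb Z/\ell$ with $\ell$ prime. Both functors $H^*_{et}(-;\underline{\mathbb Z/\ell})$ and $H^*_{sheaf}((-)_{\mathrm{an}};\underline{\mathbb Z/\ell})$ satisfy Mayer--Vietoris for Zariski open covers and admit Leray spectral sequences for morphisms, all compatibly with $c$; so it is enough to check $c_X$ is an isomorphism Zariski-locally on $X$ and along fibrations. Here one invokes Artin's good-neighbourhood theorem: every point of a smooth $\Co$-variety admits a Zariski neighbourhood $U$ fitting into a tower
\[U=U_n\to U_{n-1}\to\cdots\to U_0=\mathrm{Spec}\,\Co\]
in which each $U_i\to U_{i-1}$ is an elementary fibration, i.e.\ the complement of a finite étale relative divisor inside a smooth proper relative curve. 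Arguing by induction on $\dim X$ — and proving the statement for arbitrary locally constant constructible $\mathbb Z/\ell$-coefficient sheaves so that the induction has a chance to close — it then suffices to treat (i) the trivial base case $U_0$, and (ii) an elementary fibration $f\colon U\to S$ under the assumption that $c$ is already known to be an isomorphism over $S$ with coefficients in any locally constant constructible sheaf of $\mathbb Z/\ell$-modules.

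For the fibration step, let $\bar f\colon\bar U\to S$ be the relative smooth compactification and $D=\bar U\setminus U$, finite étale over $S$. Proper base change for $\bar f$ together with the excision sequence for $D$ shows that $R^0 f_*\underline{\mathbb Z/\ell}=\underline{\mathbb Z/\ell}$, that $R^1 f_*\underline{\mathbb Z/\ell}$ is locally constant constructible, and that $R^q f_*\underline{\mathbb Z/\ell}=0$ for $q\geqslant 2$; the strictly analogous statements hold in the analytic topology (elementary fibrations are analytically locally trivial), and the comparison map $c$ identifies the two families of sheaves. The essential input here is the comparison for $H^1$ of a single affine curve, namely the Riemann existence theorem matching $\mu_\ell$-torsors with degree-$\ell$ topological covering spaces, which simultaneously shows that the étale and analytic monodromy representations of $\pi_1(S)$ on these computations agree. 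Comparing the Leray spectral sequences of $f$ in the two topologies and feeding in the inductive hypothesis over $S$ with coefficients in the sheaves $R^q f_*\underline{\mathbb Z/\ell}$ identifies the $E_2$-pages, hence the abutments, which is exactly $c_U$.

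The heavy results that must be taken as black boxes are Artin's good-neighbourhood theorem and the proper base change theorem for torsion étale cohomology. The genuinely delicate part of the argument is the fibration step: one has to carry along the local systems $R^q f_*\underline{\mathbb Z/\ell}$, upgrade the comparison statement from constant to locally constant constructible coefficients so that the induction actually closes, and verify that the étale and topological monodromy actions coincide — the last point being where the Riemann existence theorem does the real work. Everything else is bookkeeping with long exact sequences and spectral sequences.
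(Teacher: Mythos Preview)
The paper does not prove this theorem; it is quoted as a classical result of Artin and used as a black box in the discussion of \'etale cohomology. Your outline is essentially the standard proof from SGA4, Expos\'e~XI: build the comparison map via analytification, reduce to $\mathbb{Z}/\ell$ by d\'evissage, localize using Artin's elementary-fibration neighbourhoods, and run induction on dimension through the Leray spectral sequence, with proper base change and the Riemann existence theorem as the essential inputs. You have also correctly flagged the main subtlety, namely that the inductive statement must be strengthened to locally constant constructible coefficients so that the higher direct images $R^q f_*$ can be fed back in.

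One minor imprecision: the analytification of an \'etale cover is a family of local biholomorphisms, not an open cover in the classical topology, so the phrase ``carried to open covers'' is not quite right. What one actually obtains is a morphism of topoi $\epsilon\colon X_{\mathrm{an}}\to X_{et}$, and the identification of analytic-\'etale cohomology with ordinary sheaf cohomology on $X_{\mathrm{an}}$ uses that local biholomorphisms admit local continuous sections. This does not affect the substance of your argument.
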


Let $p$ be a prime number and $K$ be a $p$-adic field. We denote by $k$ the residue field of $K$ which is a finite field of characteristic $p$. Let $\iota : \overline{K} \hookrightarrow \Co$ be an embedding. We denote by $\mathrm{Sch}_K$ the category of schemes over $K$ that are separated and of finite type. Let $X$ be a smooth scheme over $K$. We denote by $X_{\mathrm{an}}$ the complex analytic space underlying  $X \times_{K} \Co$. Then we can relate the étale cohomology of $X \times_{K} \overline{K}$ with the one of $X \times_{K} \Co$ using the embedding $\iota$. We obtain a zig-zag of maps 
\[H^*_{et}(X \times_{K} \overline{K};A) \overset{u}{\longleftarrow} H^*_{et}(X \times_{K} \Co;A)\xrightarrow{\cong} H^*_{sheaf}(X_{\mathrm{an}},\underline{A}),\]
for $A$ any finite abelian group. Following a standard theorem of étale cohomology known as \emph{smooth base change theorem} (see \cite{SGA4}), the map $u$ is also an isomorphism. By functoriality of étale cohomology, the group \[H^*_{et}(X \times_{K} \overline{K};A)\] has an action of the absolute Galois group $\mathrm{Gal}(\overline{K}/K)$ . It can be shown that all these isomorphisms lift as $E_{\infty}$-quasi-isomorphisms at the cochain level (see \cite{shin}) and similarly that the Galois action on the left side of this zig-zag lifts to the cochain level. The group $\mathrm{Gal}(\overline{k}/k)$ is isomorphic to the profinite completion of the integers, denoted $\hat{\mathbb{Z}}$, generated by the Frobenius $x \mapsto x^q$, with $q = |k| = p^n$. We make once and for all a choice of a lift $\varphi$ of the Frobenius in $\mathrm{Gal}(\overline{K}/K)$. The upshot of all this discussion is that, given a finite ring $A$, we have a zig-zag
\[C_{et}^*(X \times_{K} \overline{K}, A )  \longleftarrow C_{et}^*(X \times_{K} \Co;A) \longrightarrow C_{sing}^*(X_{\mathrm{an}};A).\]
in which both maps are quasi-isomorphisms of $E_{\infty}$-algebras and in which the left-hand side is equipped with an automorphism $\varphi$. Moreover, this data is functorial in the input $X$. This discussion can be extended from finite coefficients to $\ell$-adic coefficients via the following definition.

\begin{definition}
Let $X$ be a scheme over some base field $K$. Define 
\[H^*_{et}(X; \mathbb{Z}_{\ell}) \coloneqq \lim_n H^*_{et}(X; \mathbb{Z}/\ell^n) \quad \mbox{and}\quad H^*_{et}(X; \mathbb{Q}_{\ell}) \coloneqq H^*_{et}(X; \mathbb{Z}_{\ell}) \otimes_{\mathbb{Z}_{\ell}} \mathbb{Q}_{\ell}.\]
\end{definition}

From the fact that smooth schemes have finite type cohomology, one can also show that for $X$ a smooth scheme over $\Co$, there is an isomorphism \[H^*_{et}(X; \mathbb{Z}_{\ell}) =  H^*_{sheaf}(X_{\mathrm{an}}; \mathbb{Z}_{\ell})\] and in fact all the previous discussion can be applied to the case of $\mathbb{Z}_{\ell}$ and $\mathbb{Q}_{\ell}$-coefficients as well.

\subsection{Formality using étale cohomology}

As in the previous section, $K$ denotes a $p$-adic field and $k = \mathbb{F}_q$ its residue field. Denote by $h$ the order of $q$ in $\mathbb{F}^{\times}_{\ell}$. For $\ell$ some prime number which is prime to $q$. 

\begin{definition}[$q$-Tate modules]
Let $V$ be a finite dimensional $\mathbb{F}_{\ell}$-vector space and $\varphi$ an automorphism of $V$. We say that the pair $(V,\varphi)$ is a \emph{$q$-Tate module} if the eigenvalues of $\varphi$ in $\overline{\mathbb{F}_{\ell}}$ are powers of $q$. Let $n \in \mathbb{N}$. A $q$-Tate module is said to be \emph{pure of weight $n$} if the only eigenvalue of $\varphi$ is $q^n$.  
\end{definition}

\begin{remark}
It should be noted that the weight of a pure Tate module is only well-defined modulo $h$. Observe also that the weights have been divided by $2$ compared to the Mixed Hodge case. 
\end{remark}

\begin{remark}
It can be shown that the category of Tate modules denoted $\mathrm{TMod}$ is a symmetric monoidal abelian category. The kernels and cokernels are simply the kernels and cokernels of the underlying $\mathbb{F}_{\ell}$-vector spaces equipped with the induced action of the endomorphism $\varphi$.  
\end{remark}

\begin{definition}
Let $\alpha$ be a rational number satisfying $0<\alpha<h$. Let $X \in \mathrm{Sch}_K$. We say that the \'etale cohomology of $X$ is \emph{$\alpha$-pure} if the following conditions are satisfied
\begin{enumerate}
\item If $\alpha n\notin \mathbb{Z}$, then $H^n_{et}(X \times_{K} \overline{K}; \mathbb{F}_{\ell}) = 0$. 
\item If $\alpha n\in\mathbb{Z}$, then $q^{\alpha n}$ is the only eigenvalue of the Frobenius acting on \[H^n_{et}(X \times_{K} \overline{K}; \mathbb{F}_{\ell}) \ .\]
\end{enumerate}   
\end{definition}

\begin{example}
Let $X=\mathbb{P}^n$. Then the \'etale cohomology of $X$ is $1/2$-pure
\end{example}

\begin{definition}
A colored operad $\P$ is \emph{admissible} if the category \[\mathrm{Alg}_{\P}(\ch_*(K))\] admits a model structure transferred along the forgetful functor $$\mathrm{Alg}_{\P}(\ch_*(K)) \to \ch_*(K)^{Ob(\P)}.$$ We say that $\P$ is \emph{$\Sigma$-cofibrant} if for all $n \geqslant  1$, and by all surjections $i : [n] \to I$, the symmetric group $\mathbb{S}_n$ acts freely on $\P(n,i)$. A colored operad $\P$ in sets is called \emph{homotopically sound} if it is admissible and $\Sigma$-cofibrant. 
\end{definition}

\begin{theorem}[{\cite[Theorem~6.5]{CH22}}]\label{frobenius}
Let $\P$ be a homotopically sound operad and let $X$ be a $\P$-algebra in $\mathrm{Sch}_K$ such that for each color $c = i(k)$ of $\P$, \[H^*_{et}(X(c) \times_{K} \overline{K}; \mathbb{F}_{\ell} )\] is $\alpha$-pure. The dg $\P$-algebra $C_*(X_{\mathrm{an}}; \mathbb{F}_{\ell})$ is $\lfloor (h-1)/ \alpha \rfloor$-formal. 
\end{theorem}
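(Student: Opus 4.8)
The plan is to transport the argument of Theorem \ref{purity implies formality} from the characteristic zero, mixed Hodge setting into the $\ell$-torsion setting, keeping track of the degree up to which everything works. The key structural fact is the zig-zag established in the discussion on étale cohomology: for each $X\in\mathrm{Sch}_K$ there is a natural zig-zag of quasi-isomorphisms of $E_\infty$-algebras (hence in particular of lax symmetric monoidal functors to $\ch_*(\mathbb{F}_\ell)$ via forgetting the $E_\infty$-structure)
\[C_{et}^*(X\times_K\overline K;\mathbb{F}_\ell)\longleftarrow C_{et}^*(X\times_K\Co;\mathbb{F}_\ell)\longrightarrow C^*_{sing}(X_{\mathrm{an}};\mathbb{F}_\ell),\]
in which the left-hand term carries a natural automorphism $\varphi$ coming from the chosen Frobenius lift. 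Dualizing (or rather, working with chains and the dual Frobenius action), we obtain a functor $\mathcal{D}_*(-)\colon \N(\mathrm{Sch}_K)^\times \to \mathbf{Ch}_*(\mathrm{TMod})^\otimes$, landing in chain complexes of $q$-Tate modules, which is naturally equivalent to $C_*(X_{\mathrm{an}};\mathbb{F}_\ell)$ after forgetting the Frobenius. This is the torsion analogue of the functor $\mathcal{E}\circ\mathcal{D}_*$ of the proof of Theorem \ref{purity implies formality}.

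First I would pin down the linear-algebra input that replaces Proposition \ref{forget général}. Given a chain complex $V=\bigoplus_n V_n$ of finite-dimensional $\mathbb{F}_\ell$-vector spaces equipped with a chain automorphism $\varphi$ such that $H_n(V)$ is pure of weight $\alpha n$ (i.e.\ $\varphi$ acts on $H_n(V)$ with single eigenvalue $q^{\alpha n}$), one decomposes $V$ into generalized eigenspaces $V=\bigoplus_{\lambda} V^{(\lambda)}$ for $\varphi$; since $\varphi$ is a chain map this is a decomposition of chain complexes. The key point is that $q$ has multiplicative order $h$ in $\mathbb{F}_\ell^\times$, so the powers $q^0,q^1,\dots,q^{h-1}$ are distinct, and the generalized eigenspace for $q^j$ ($0\le j<h$) together with the induced weight grading gives a lax symmetric monoidal section of the forgetful functor — but this only sees homology in degrees $n$ with $\alpha n \equiv j \pmod h$ correctly, and more importantly it only separates distinct eigenvalues, of which there are only $h$. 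So the eigenspace splitting recovers $H_n(V)$ cleanly precisely when the distinct values $q^{\alpha n}$ for the relevant range of $n$ are genuinely distinct, which forces $\alpha n < h$, i.e.\ $n \leq \lfloor (h-1)/\alpha\rfloor$. Thus one proves: the forgetful functor from ($\alpha$-pure, finite-dimensional, Frobenius-equipped) chain complexes to $\ch_*(\mathbb{F}_\ell)$ is $\lfloor (h-1)/\alpha\rfloor$-formal as a lax symmetric monoidal functor.

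Next I would assemble the pieces exactly as in Theorem \ref{purity implies formality}, but using Theorem \ref{frobenius}'s hypothesis that $\P$ is homotopically sound rather than invoking the $\infty$-categorical machinery for symmetric monoidal functors. Concretely: a $\P$-algebra $X$ in $\mathrm{Sch}_K$ with the purity hypothesis on each color yields, by applying the functor $\mathcal{D}_*$, a $\P$-algebra in $\mathbf{Ch}_*(\mathrm{TMod})$; because each $X(c)$ is $\alpha$-pure, this lands in the full subcategory of chain complexes with $\alpha$-pure homology; the finite-type-ness of étale cohomology of finite-type schemes gives condition (1)-type finiteness; the generalized-eigenspace splitting above produces, in the range of degrees $\leq \lfloor (h-1)/\alpha\rfloor$, a natural zig-zag of $N$-quasi-isomorphisms of lax symmetric monoidal functors connecting $\mathcal{D}_*$ to $H\circ\mathcal{D}_*$, and hence — applying the $\P$-algebra structure via the colored-operad formalism (Proposition \ref{equivalence}, Proposition \ref{ima}) — a zig-zag of $N$-quasi-isomorphisms of dg $\P$-algebras. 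Composing with the natural equivalence $\mathcal{D}_*(-)\simeq C_*(X_{\mathrm{an}};\mathbb{F}_\ell)$ gives the $\lfloor(h-1)/\alpha\rfloor$-formality of $C_*(X_{\mathrm{an}};\mathbb{F}_\ell)$ as a dg $\P$-algebra.

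**The main obstacle** I expect is twofold. The first, genuinely technical point is constructing the lift of the Frobenius action to the cochain/chain level \emph{functorially and compatibly with the lax symmetric monoidal structure}, together with the natural equivalence $\mathcal{D}_*(-)\simeq C_*(X_{\mathrm{an}};\mathbb{F}_\ell)$; this rests on the $E_\infty$-refinements of the comparison isomorphisms (Artin comparison, smooth base change) cited from \cite{shin,petersenremark,CC22}, and one must be careful that the $\infty$-categorical model of $\mathbf{Ch}_*(\mathrm{TMod})$ is symmetric monoidal (so that $\mathrm{TMod}$-valued chains can be manipulated as in Example \ref{infinySMC}) — this requires exactness of $\otimes$ on $\mathrm{TMod}$, which holds since it is computed on underlying $\mathbb{F}_\ell$-vector spaces. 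The second, conceptual point is the bookkeeping of the degree bound: one must check that an $N$-quasi-isomorphism of lax symmetric monoidal functors, when fed into the homotopically sound operad $\P$, produces an $N$-quasi-isomorphism of $\P$-algebras (so that the notion of $N$-formality is preserved), and that $N=\lfloor(h-1)/\alpha\rfloor$ is exactly the threshold below which the eigenvalues $q^{\alpha n}$ separate homology — the floor function here is precisely the largest $n$ with $\alpha n\leq h-1$. Everything else is a routine transcription of the mixed Hodge argument of Theorem \ref{purity implies formality} with $\mathrm{MHS}_\QQ$ replaced by $\mathrm{TMod}$ and ``pure'' interpreted via Frobenius eigenvalues.
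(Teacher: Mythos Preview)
Your overall strategy is correct and closely parallels the paper's proof, but there are two points where your write-up blurs important distinctions.

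First, you claim that the \'etale chains with Frobenius action give a functor landing in $\mathbf{Ch}_*(\mathrm{TMod})$, i.e.\ chain complexes each of whose terms is a Tate module. This is not what one gets: the \'etale cochain complex $C^*_{et}(X\times_K\overline{K};\mathbb{F}_\ell)$ need not be degreewise finite-dimensional, and the Frobenius need not act with power-of-$q$ eigenvalues on each $C^n$. What one obtains is only a \emph{Tate complex}---a complex with an automorphism whose \emph{homology} is a Tate module. Your eigenspace decomposition requires degreewise finiteness, so you cannot apply it directly. The paper handles this via a Beilinson-type theorem (the equivalence $\mathbf{Ch}_*(\mathrm{TMod})\simeq\mathbf{TComp}$ of symmetric monoidal $\infty$-categories), which lets one replace a Tate complex by a quasi-isomorphic chain complex of genuine Tate modules; only then can one apply the splitting of Lemma \ref{splitting torsion} and the $\mathbb{Z}/h$-graded purity argument (Proposition \ref{forget torsion}). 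This is the exact analogue of Theorem \ref{equivalence sombre} in the mixed Hodge case, and it is not optional.

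Second, you describe the homotopically sound hypothesis as an \emph{alternative} to the $\infty$-categorical machinery. In the paper it is not: the argument is carried out $\infty$-categorically throughout (the comparison zig-zag and the Beilinson equivalence are both $\infty$-categorical), producing $N$-formality of $C_*(X_{\mathrm{an}};\mathbb{F}_\ell)$ in $\mathbf{Alg}_{\P}(\mathbf{Ch}_*(\mathbb{F}_\ell))$. The homotopically sound hypothesis enters only at the very end, via Proposition \ref{hinich torsion} (Hinich's rigidification), to convert $\infty$-categorical $N$-formality into strict $N$-formality. In characteristic zero Corollary \ref{infinity functors} plays this role for arbitrary symmetric monoidal functors; in positive characteristic that corollary fails, which is precisely why the homotopically sound restriction on $\P$ is needed.

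With these two corrections---inserting the Beilinson-type equivalence $\mathbf{TComp}\simeq\mathbf{Ch}_*(\mathrm{TMod})$, and using homotopical soundness as a rigidification step rather than a bypass---your outline becomes the paper's proof.
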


Let us explain where the homotopically sound hypothesis comes from. Recall that if $\mathbf{k}$ is a field of characteristic zero and if $F$ is formal (resp. $N$-formal) as lax symmetric monoidal $\infty$-functor $\N(\C) \to \mathbf{Ch}_*(\mathbf{k})$, then $F$ is formal (resp. $N$-formal) as a lax symmetric monoidal functor (as in the Corollary \ref{infinity functors}). However for a field which is not of characteristic zero, this corollary fails. This comes from the fact that the homotopy theory of lax monoidal functors is in general not equivalent to the homotopy theory of lax monoidal $\infty$-functors. In order to circumvent this difficulty, we will restrict to homotopically sound operads for which we have a rigidification result due to Hinich. For $\P$ an operad in sets, we denote by \[\mathbf{Alg}_{\P}(\mathbf{Ch}_*(\mathbf{k}))\] the $\infty$-category of $\P$-algebras in the $\infty$-category of chain complexes of $\mathbf{k}$-vector spaces. There is an obvious functor
\[\mathrm{Alg}_{\P}(\ch_*(\mathbf{k}))\to\mathbf{Alg}_{\P}(\mathbf{Ch}_*(\mathbf{k}))\]
which sends quasi-isomorphisms to equivalences. It  induces a map
\[\mathrm{N}_{W}(\mathrm{Alg}_{\P}(\ch_*(\mathbf{k})))\to\mathbf{Alg}_{\P}(\mathbf{Ch}_*(\mathbf{k}))\]
Hinich shows that, under the hypothesis that $\P$ is homotopically sound, this functor is an equivalence of $\infty$-categories. In particular, we obtain the following proposition as a corollary of Hinich's theorem.

\begin{proposition}\label{hinich torsion}
Let $\P$ be a homotopically sound operad in sets. Let $A$ be a $\P$-algebra in $\ch_*(\mathbf{k})$. 
\begin{enumerate}
\item If $A$ is formal in  $\mathbf{Alg}_{\P}(\mathbf{Ch}_*(\mathbf{k}))$, then $A$ is formal in $\mathrm{Alg}_{\P}(\ch_*(\mathbf{k}))$. \smallskip
\item If $A$ is $N$-formal in $\mathbf{Alg}_{\P}(\mathbf{Ch}_{\geqslant  0}(\mathbf{k}))$, then $A$ is $N$-formal in $\mathrm{Alg}_{\P}(\ch_{\geqslant  0}(\mathbf{k}))$. 
\end{enumerate}
\end{proposition}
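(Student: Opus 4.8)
The plan is to deduce both statements from two inputs already at hand: Hinich's rigidification theorem, giving an equivalence of $\infty$-categories $\mathrm{N}_W(\mathrm{Alg}_{\P}(\ch_*(\mathbf{k}))) \xrightarrow{\ \sim\ } \mathbf{Alg}_{\P}(\mathbf{Ch}_*(\mathbf{k}))$ (this is where $\Sigma$-cofibrancy is used), and the transferred model structure on $\mathrm{Alg}_{\P}(\ch_*(\mathbf{k}))$ furnished by admissibility. Together these say that the naive homotopy theory of dg $\P$-algebras and its $\infty$-categorical incarnation agree, which is exactly what is needed to turn an $\infty$-formality into a strict one.

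For (1), I would argue as follows. By definition, $A$ being formal in $\mathbf{Alg}_{\P}(\mathbf{Ch}_*(\mathbf{k}))$ means that $A$ and the homology $\P$-algebra $H(A)$ (with zero differential) are equivalent objects there. Transporting this equivalence across Hinich's theorem, $A$ and $H(A)$ become equivalent in $\mathrm{N}_W(\mathrm{Alg}_{\P}(\ch_*(\mathbf{k})))$, which is the $\infty$-category underlying the model category $\mathrm{Alg}_{\P}(\ch_*(\mathbf{k}))$. I would then invoke the standard fact that in (the $\infty$-category underlying) a model category two objects are equivalent precisely when they are joined by a finite zig-zag of weak equivalences: the nontrivial direction follows by replacing $A$ and $H(A)$ by cofibrant models---every object is fibrant here, the structure being transferred from chain complexes---and noting that an $\infty$-categorical equivalence between cofibrant-fibrant objects is represented by a genuine weak equivalence (Whitehead). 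Since the weak equivalences are the quasi-isomorphisms, this produces a zig-zag of quasi-isomorphisms of dg $\P$-algebras between $A$ and $H(A)$, i.e.\ formality in the sense of Definition \ref{formal}.

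For (2), the extra ingredient is the good truncation functor $\tau_{\leqslant N}\colon \ch_{\geqslant 0}(\mathbf{k}) \to \ch_{\geqslant 0}(\mathbf{k})$. First I would record that it is lax symmetric monoidal---the structure maps $\tau_{\leqslant N}C \otimes \tau_{\leqslant N}D \to \tau_{\leqslant N}(C \otimes D)$ are induced by the canonical surjections $C \to \tau_{\leqslant N}C$, and the coherences are a routine check---so that it carries dg $\P$-algebras to dg $\P$-algebras and natural transformations to natural transformations, preserves quasi-isomorphisms, and thus also acts on the associated $\infty$-categories; moreover it sends $N$-quasi-isomorphisms to quasi-isomorphisms, the unit $B \to \tau_{\leqslant N}B$ is a natural $N$-quasi-isomorphism of $\P$-algebras, and $\tau_{\leqslant N}H(A) = H(\tau_{\leqslant N}A)$ as $\P$-algebras (both equal $H(A)$ cut off above degree $N$, the identification coming from the degreewise-surjective unit map). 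Unravelling $N$-formality in $\mathbf{Alg}_{\P}(\mathbf{Ch}_{\geqslant 0}(\mathbf{k}))$ as a chain of maps inducing isomorphisms on $H_i$ for $i \leqslant N$, applying $\tau_{\leqslant N}$ turns it into a chain of equivalences, so $\tau_{\leqslant N}A \simeq H(\tau_{\leqslant N}A)$ in $\mathbf{Alg}_{\P}(\mathbf{Ch}_*(\mathbf{k}))$. Part (1) applied to $\tau_{\leqslant N}A$ then yields a zig-zag of quasi-isomorphisms of dg $\P$-algebras between $\tau_{\leqslant N}A$ and $H(\tau_{\leqslant N}A)$; its intermediate terms need not be connective, but applying the connective cover $\tau_{\geqslant 0}$ (again lax symmetric monoidal, quasi-isomorphism-preserving, and fixing both ends) pushes the whole zig-zag into $\ch_{\geqslant 0}(\mathbf{k})$. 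Finally I would extend this zig-zag on the left by the unit $A \to \tau_{\leqslant N}A$ and on the right by the unit $H(A) \to \tau_{\leqslant N}H(A) = H(\tau_{\leqslant N}A)$; all maps are then $N$-quasi-isomorphisms of dg $\P$-algebras in $\ch_{\geqslant 0}(\mathbf{k})$, which is exactly $N$-formality of $A$ in $\mathrm{Alg}_{\P}(\ch_{\geqslant 0}(\mathbf{k}))$.

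The step I expect to be the crux is the comparison underlying (1): passing from ``$A \simeq H(A)$ in the $\infty$-category of $\P$-algebras'' to ``$A$ and $H(A)$ are joined by a zig-zag of strict quasi-isomorphisms.'' This is precisely what breaks in positive characteristic without the homotopically sound hypothesis---admissibility is what makes $\mathrm{Alg}_{\P}(\ch_*(\mathbf{k}))$ a model category so that the comparison is available, and $\Sigma$-cofibrancy is what makes Hinich's equivalence hold. By contrast, everything in (2) beyond this is bookkeeping: the monoidality of $\tau_{\leqslant N}$ and $\tau_{\geqslant 0}$, the identity $\tau_{\leqslant N}H(A) = H(\tau_{\leqslant N}A)$, and the care needed to keep all objects connective.
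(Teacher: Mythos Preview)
Your proposal is correct and follows the same route as the paper: part (1) is deduced from Hinich's rigidification equivalence together with the transferred model structure, and part (2) is reduced to (1) via the good truncation $\tau_{\leqslant N}$, using that $A$ is $N$-formal if and only if $\tau_{\leqslant N}A$ is formal. The paper's proof simply asserts this last equivalence as an observation, whereas you have spelled out the monoidality of the truncation functors and the bookkeeping with $\tau_{\geqslant 0}$ to stay connective; these details are genuine but routine, and your identification of the crux in (1) is exactly right.
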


\begin{proof}
The point (1) is an immediate consequence of Hinich's theorem. Point (2) follows from (1) and from the observation that a $\P$-algebra $A$ is $N$-formal if and only if its truncation $t_{\leqslant  N}(A)$ is formal (where $t_{\leqslant  N}$ denotes the truncation functor which kills homology in degrees greater than $N$). 
\end{proof}

Let $h$ be a positive integer. If $\mathcal{A}$ is a symmetric monoidal abelian category, we denote by $gr^{(h)} \mathcal{A}$ the category of $\mathbb{Z}/h$-graded objects of $\mathcal{A}$. An object in this category is a collection \[\{A^a\}_{a\in\mathbb{Z}/h}\] of objects of $\mathcal{A}$ indexed by the elements of the group $\mathbb{Z}/h$. It is a symmetric monoidal category, with the tensor product given by \[(A \otimes B)^n \coloneqq \sum_{a + b \equiv n \pmod h} A^a \otimes B^b. \]

There is a functor $\mathrm{Tot} : gr^{(h)} \mathcal{A} \to \mathcal{A}$ given by the formula
\[\{A^a\}_{a\in\mathbb{Z}/h}\longmapsto \bigoplus_{a\in\mathbb{Z}/h}A^a\]
It is straightforward that this functor can be given the structure of a strong symmetric monoidal functor. We have the following version of Theorem \ref{Splitting sur Q}. 

\begin{lemma}[{\cite[Lemma~2.9]{CH22}}]\label{splitting torsion}
Let $h$ be the order of $q$ in $(\mathbb{F}_{\ell})^{\times}$. The  functor $U : \mathrm{TMod} \to \mathrm{Vect}_{\mathbb{F}_{\ell}}$ defined by $(V, \varphi) \mapsto V$ admits a factorization

\begin{center}
	\begin{tikzcd}
		&gr^{(h)} \mathrm{Vect}_{\mathbb{F}_{\ell}} \arrow[d,"\mathrm{Tot}"]\\
		\mathrm{TMod} \arrow[r,"U"'] \arrow[ru, "G"] & \mathrm{Vect}_{\mathbb{F}_{\ell}}
	\end{tikzcd}
\end{center}

into strong symmetric monoidal functors. The functor $G$ is defined by declaring $G(V, \varphi)^n$ to be the generalized eigenspace for the eigenvalue $q^n$. 
\end{lemma}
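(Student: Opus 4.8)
The plan is to establish the factorization by a direct linear-algebra argument, the engine being the primary (generalized eigenspace) decomposition of an endomorphism whose spectrum lies in the ground field. First I would note that for a $q$-Tate module $(V,\varphi)$ the eigenvalues of $\varphi$ are powers of $q$, hence already lie in $\mathbb{F}_{\ell}^{\times}$; so the characteristic polynomial of $\varphi$ splits over $\mathbb{F}_{\ell}$ and $V$ is the internal direct sum of its generalized eigenspaces $V_{\lambda} \coloneqq \ker\big((\varphi - \lambda\cdot\mathrm{id})^{\dim V}\big)$, the sum running over the eigenvalues $\lambda \in \{q^{n} : n \in \mathbb{Z}\}$ that occur. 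Since $q$ has order exactly $h$ in $\mathbb{F}_{\ell}^{\times}$, the assignment $n \mapsto q^{n}$ is a bijection from $\mathbb{Z}/h$ onto this cyclic set of possible eigenvalues, so $G(V,\varphi)^{n} \coloneqq V_{q^{n}}$ is well defined for $n \in \mathbb{Z}/h$ and $\mathrm{Tot}(G(V,\varphi)) = \bigoplus_{n} V_{q^{n}} = V = U(V,\varphi)$. A morphism of $q$-Tate modules commutes with the structural automorphisms, hence preserves generalized eigenspaces, so $G$ is a functor; and since the decomposition $V = \bigoplus_{n} V_{q^{n}}$ is tautologically natural, the identity maps assemble into a natural isomorphism $U \cong \mathrm{Tot}\circ G$.

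The substantive point, and the step I expect to require the most care, is that $G$ is \emph{strong} symmetric monoidal. The unit of $\mathrm{TMod}$ is $(\mathbb{F}_{\ell},\mathrm{id})$, and as $\mathrm{id} = q^{0}\cdot\mathrm{id}$ its image under $G$ is $\mathbb{F}_{\ell}$ placed in degree $0$, the unit of $gr^{(h)}\mathrm{Vect}_{\mathbb{F}_{\ell}}$. For the tensor constraint I would restrict $\varphi\otimes\psi$ to $V_{q^{a}}\otimes W_{q^{b}}$: writing $\varphi = q^{a}\cdot\mathrm{id} + N$ and $\psi = q^{b}\cdot\mathrm{id} + M$ there with $N$, $M$ nilpotent, one obtains $\varphi\otimes\psi = q^{a+b}\cdot\mathrm{id} + (q^{a}\,\mathrm{id}\otimes M + q^{b}\,N\otimes\mathrm{id} + N\otimes M)$, where the bracketed operator is a sum of pairwise commuting nilpotents, hence nilpotent. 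Thus $V_{q^{a}}\otimes W_{q^{b}}$ lies in the generalized $q^{a+b}$-eigenspace of $\varphi\otimes\psi$, and summing over $a,b$ identifies the generalized $q^{n}$-eigenspace of $\varphi\otimes\psi$ with $\bigoplus_{a+b\equiv n \pmod h} V_{q^{a}}\otimes W_{q^{b}}$, which is exactly the degree-$n$ component of $G(V,\varphi)\otimes G(W,\psi)$ in $gr^{(h)}\mathrm{Vect}_{\mathbb{F}_{\ell}}$. So $G$ is strong monoidal, and since its coherence isomorphisms are identity maps of underlying vector spaces and no Koszul signs intervene on either side, compatibility with the associativity, unit and symmetry constraints is automatic. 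Together with the fact (already recorded in the excerpt) that $\mathrm{Tot}$ is strong symmetric monoidal, this gives the factorization through strong symmetric monoidal functors.

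The only genuinely non-formal ingredient is the tensor-product computation for generalized eigenspaces just used; the rest is bookkeeping, the one thing to keep straight being that the resulting grading is $\mathbb{Z}/h$-valued rather than $\mathbb{Z}$-valued precisely because $q$ has order $h$ in $\mathbb{F}_{\ell}^{\times}$.
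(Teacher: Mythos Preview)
Your proof is correct and follows exactly the approach indicated in the paper: the paper does not give a separate proof but rather defines $G$ in the statement itself via generalized eigenspaces for the eigenvalues $q^{n}$, and you have correctly supplied the verification that this yields a strong symmetric monoidal factorization. The key computation---that $V_{q^{a}}\otimes W_{q^{b}}$ lands in the generalized $q^{a+b}$-eigenspace of $\varphi\otimes\psi$---is the heart of the matter and your argument for it is sound.
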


To study formality in the torsion case, there is also a version of Theorem \ref{forget général}, which deals with chain complexes of $\mathbb{Z}/h$-graded objects. We denote by 
\[\ch_*(gr^{(h)}\mathcal{A})^{\alpha \mbox{-}pure}\subset \ch_*(gr^{(h)}\mathcal{A})\] the full subcategory given by those $\mathbb{Z}/h$-graded complexes $V = \bigoplus V^p_n$ with \emph{$\alpha\text{-}$pure homology}~:
\[H_n(V)^p = 0 \quad \mbox{for all } p \neq \alpha n \pmod m .\]

The following proposition is a $\mathbb{Z}/h$-graded version of Proposition \ref{forget général}. 

\begin{proposition}[{\cite[Proposition~5.13.]{CH22}}]\label{forget torsion}
The forgetful functor  $$U : \ch_{\geqslant  0}(gr^{(h)}\mathcal{A})^{\alpha \text{-} pure} \to \ch_{\geqslant  0}(\mathcal{A})$$ is $\lfloor (h-1)/ \alpha \rfloor$-formal as lax symmetric monoidal functor.  
\end{proposition}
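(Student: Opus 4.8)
The plan is to reduce the $\mathbb{Z}/h$-graded statement to a purely linear-algebraic fact about chain complexes of $\mathbb{Z}/h$-graded objects, exactly as in the proof of Proposition~\ref{forget général}, the only difference being that purity is now a congruence condition modulo $h$ rather than an equality, which forces the bound $\lfloor (h-1)/\alpha \rfloor$ to appear. So first I would fix $V = \bigoplus_p V^p_\bullet$ in $\ch_{\geqslant 0}(gr^{(h)}\mathcal{A})^{\alpha\text{-}pure}$ and build, functorially in $V$, a zig-zag of lax symmetric monoidal natural transformations connecting the forgetful functor $U$ to $H \circ U$, together with the claim that each map in the zig-zag is an $N$-quasi-isomorphism for $N = \lfloor (h-1)/\alpha \rfloor$. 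The natural candidate is the sub-complex $Z^{\alpha}(V) \subseteq V$ spanned in homological degree $n$ by the part of $V$ sitting in weight $\equiv \alpha n \pmod h$; that is, $Z^{\alpha}(V)_n := \bigoplus_{p \equiv \alpha n} V^p_n$ (only making sense for those $n$ with $\alpha n \in \mathbb{Z}$, and otherwise set it to be the full $V_n$ or $0$ as appropriate). Because the differential has degree $-1$ and does not change the weight, $Z^{\alpha}(V)$ is \emph{not} in general a sub-complex; the honest move, following Proposition~\ref{condformality} and Proposition~\ref{forget général}, is instead to pass to the direct-sum decomposition $V = \bigoplus_{a \in \mathbb{Z}/h} V[a]$, where $V[a]_n := \bigoplus_{p \equiv \alpha n + a} V^p_n$, each $V[a]$ being a genuine sub-complex since $d$ preserves the residue $p - \alpha n$ only up to the shift induced by $n \mapsto n-1$, so one should group by $p - \alpha n \bmod h$. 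Then $V \cong \bigoplus_a V[a]$ as complexes, the summand $V[0]$ has the property that its weight in degree $n$ is forced to be $\equiv \alpha n$, hence carries a canonical $\mathbb{Z}/h$-grading factoring through $\mathrm{Tot}$, and the $\alpha$-purity hypothesis says precisely that $H_n(V[a]) = 0$ for $a \neq 0$.

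The key quantitative step is then to estimate, for $a \neq 0$, the range of degrees in which $V[a]$ can have nonzero homology, and to observe it is empty in low degrees only in the \emph{opposite} direction to what one wants — so actually the point is the reverse: one shows that the inclusion $V[0] \hookrightarrow V$ and the projection onto the formal complex $H(V[0])$ are $N$-quasi-isomorphisms. The obstruction to $V[0] \hookrightarrow V$ being a global quasi-isomorphism is exactly the homology of $\bigoplus_{a \neq 0} V[a]$; since a nonzero homology class in $V[a]$ forces a cycle in some weight $p \equiv \alpha n + a \pmod h$ with $a \not\equiv 0$, and purity kills all homology outside weight $\equiv \alpha n$, the homology of $V$ in degrees where this could interfere is controlled. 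Concretely, I expect the argument to show that $V[0] \to V$ induces an iso on $H_n$ for $n \le \lfloor (h-1)/\alpha \rfloor$: the failure can only come from weights $p$ with $0 < |p - \alpha n| < h$ contributing, and for $n$ below the stated bound the relevant weights $\alpha n$ stay within a single fundamental domain $\{0, 1, \dots, h-1\}$ so that $\alpha$-purity (which is a mod-$h$ statement but lifts to an honest statement in this range) already forces those contributions to vanish. This is the step where the precise bound $\lfloor (h-1)/\alpha\rfloor$ must be extracted, and it is essentially a bookkeeping exercise matching the one in \cite[Proposition~5.13]{CH22}.

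After that the remaining steps are formal. I would check that the inclusion $V[0] \hookrightarrow V$, the identification of $V[0]$ with an object pulled back along $\mathrm{Tot}: gr^{(h)}\mathcal{A} \to \mathcal{A}$ (using that its weight in degree $n$ is pinned down), and the map $V[0] \to H(V[0])$ splitting off homology — which exists because over this subcategory the differential of $V[0]$, after choosing the weight grading, is degree-wise and the complex is formal by the same linear-algebra argument as in Proposition~\ref{forget} — are all compatible with the lax symmetric monoidal structures. Here I would invoke Lemma~\ref{splitting torsion} (the $\mathbb{Z}/h$-analogue of Theorem~\ref{Splitting sur Q}) to get the monoidal compatibility of the splitting $U = \mathrm{Tot} \circ G$, and the only thing to verify by hand is that the decomposition $V \cong \bigoplus_a V[a]$ and the projection to $V[0]$ are monoidal natural transformations, which follows because the residue $p - \alpha n \bmod h$ is additive under the tensor product formula $(C \otimes D)^p_m = \bigoplus C^q_n \otimes D^{p-q}_{m-n}$. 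Assembling these pieces gives the zig-zag
\[
U \;\xleftarrow{\ \sim_N\ }\; (V \mapsto V[0]) \;\xrightarrow{\ \sim_N\ }\; (V \mapsto H(V[0])) \;=\; H \circ U
\]
of lax symmetric monoidal functors whose maps are $\lfloor (h-1)/\alpha\rfloor$-quasi-isomorphisms, which is the claim. The main obstacle, as indicated, is the degree estimate in the second paragraph: getting the sharp constant $\lfloor(h-1)/\alpha\rfloor$ rather than a weaker bound requires carefully tracking how the mod-$h$ purity condition interacts with the homological degree, and this is where I would follow \cite{CH22} most closely.
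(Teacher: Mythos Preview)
The paper does not actually prove this proposition; it is stated with a citation to \cite[Proposition~5.13]{CH22} and no argument is reproduced. So there is nothing in-paper to compare against directly. That said, your outline contains a genuine gap.

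Your central move is the decomposition $V = \bigoplus_{a \in \mathbb{Z}/h} V[a]$ with $V[a]_n := \bigoplus_{p \equiv \alpha n + a} V^p_n$, together with the claim that each $V[a]$ is a sub-complex. It is not. The differential preserves the weight $p$ and lowers the homological degree $n$ by one, so it sends the residue $p - \alpha n$ to $p - \alpha(n-1) = (p - \alpha n) + \alpha$; that is, $d$ maps $V[a]$ into $V[a+\alpha]$, not into $V[a]$. You even write that $d$ preserves this residue ``only up to the shift induced by $n \mapsto n-1$'' --- but that shift is by $\alpha$, which is nonzero modulo $h$, and this is precisely the obstruction. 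Consequently $V[0] \hookrightarrow V$ is not a chain map, the phrase ``homology of $V[a]$'' has no meaning, and everything downstream (purity forcing $H_*(V[a]) = 0$ for $a \neq 0$, the monoidality of the projection onto $V[0]$) collapses. Your first instinct, that the diagonal $Z^\alpha(V)$ is not a sub-complex, was correct; the replacement $V[0]$ has exactly the same defect.

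The decomposition that \emph{is} into sub-complexes is the one by weight alone, $V = \bigoplus_{p \in \mathbb{Z}/h} V^p$, since $d$ preserves $p$. Purity then says each $V^p$ has homology only in degrees $n$ with $\alpha n \equiv p \pmod h$, and the role of the bound $N = \lfloor(h-1)/\alpha\rfloor$ is that for $0 \leqslant n \leqslant N$ the numbers $\alpha n$ all lie in $\{0,\dots,h-1\}$ and are therefore pairwise distinct modulo $h$: each $V^p$ has homology in at most a single degree within $[0,N]$. From there one builds the zig-zag using canonical truncations of the $V^p$ (which \emph{are} chain maps), not a diagonal summand. Your reading of where the bound $\lfloor(h-1)/\alpha\rfloor$ comes from is right, but the object $V[0]$ meant to realise it is not a complex.
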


\begin{definition}[Tate complex]
A \emph{Tate complex} is a pair $(C, \varphi)$ where $C$ is a chain complex of $\mathbb{F}_{\ell}$-modules and $\varphi$ is an endomorphism of $C$ such that the pair $(H_n(C), H_n(\varphi))$ is a Tate module for all $n$. We denote by $\mathbf{TComp}$ the $\infty$-category of Tate complexes and by $\mathbf{TComp}^{\alpha \text{-} pure}_{\geqslant  0}$ to be the full subcategory of $\alpha$-pure non-negatively graded Tate complexes. 
\end{definition}

\begin{proof}[Proof of Theorem \ref{frobenius}]
It suffices to prove that the forgetful functor
\[\mathbf{TComp}^{\alpha \text{-} pure}_{\geqslant  0} \to \mathbf{Ch}_{\geqslant  0}(\mathbb{F}_{\ell})\]
is $N$-formal as a lax monoidal $\infty$-functor, with $N = \lfloor (h-1)/ \alpha \rfloor$. Indeed, if this is true, this will imply that $C_*(X_{\mathrm{an}}; \mathbb{F}_{\ell})$ is $N$-formal in the $\infty$-category $\mathbf{Alg}_{\P}(\mathbf{Ch}_*(\mathbf{k}))$ and Proposition \ref{hinich torsion} allows to come back to the standard definition of formality. To do so, we use the same strategy as in the mixed Hodge complexes case. First, one can prove a Beilinson type theorem for the categories of Tate complexes. There is a canonical symmetric monoidal functor $\ch_*(\mathrm{TMod})  \to \mathrm{TComp}$ that preserves quasi-isomorphisms on both sides. Therefore it induces a symmetric monoidal $\infty$-functor 
\[\mathbf{Ch}_*(\mathrm{TMod}) \to  \mathbf{TComp}.\]
One can show that this $\infty$-functor is in fact an equivalence of symmetric monoidal $\infty$-categories,  see \cite[Theorem~4.7.]{CH22}. Using Lemma \ref{splitting torsion}, the forgetful functor $\mathbf{TComp} \to \mathbf{Ch}_{\geqslant  0}(\mathbb{F}_{\ell})$ factorizes as follows 
$$\mathbf{TComp} \to \mathbf{Ch}_*(\mathrm{TMod}) \to \mathbf{Ch}_*\left(gr^{(h)}\mathrm{Vect}_{\mathbb{F}_{\ell}}\right) \to  \mathbf{Ch}_{*}(\mathbb{F}_{\ell}).$$ where the first map is an inverse to the equivalence mentioned before. We can restrict all categories to $\alpha$-pure $\mathbb{Z}/h$-graded objects and then the last functor is $N$-formal by Proposition \ref{forget torsion}. The result follows from Proposition \ref{composition}. 
\end{proof}

We now give some examples of applications of this theorem.

\begin{proposition}\label{frobenius action on cohomology of moduli space}
Let $K$ be any $p$-adic field. The cohomology $H^*_{et}(\overline{\mathcal{M}_{0,n}}\times_{\mathbb{Z}}\overline{K},\mathbb{F}_{\ell})$ is $1/2$-pure.
\end{proposition}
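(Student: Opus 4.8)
The plan is to reduce the statement to the known purity of the cohomology of $\mathbb{P}^1$ (which is $1/2$-pure, as recorded in the example above) by an induction on $n$, using the stratification of $\overline{\mathcal{M}_{0,n}}$ by topological type of the stable curve. Recall that $\overline{\mathcal{M}_{0,n}}$ is a smooth projective variety over $\mathbb{Z}$ (more precisely over $\operatorname{Spec}\mathbb{Z}$, hence base-changes nicely to any $p$-adic field $K$), and that it is obtained from $\overline{\mathcal{M}_{0,n-1}}$ by a sequence of blow-ups along smooth centers that are themselves products of lower $\overline{\mathcal{M}_{0,k}}$'s; this is the Keel presentation. Since étale cohomology with $\mathbb{F}_\ell$-coefficients of a smooth projective variety over $\overline{K}$ carries a Frobenius action, and since the weight of a pure $q$-Tate module is well-defined only modulo $h$ (Remark after the definition of $q$-Tate modules), it suffices to track weights through these geometric operations.

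First I would record the two stability properties of $1/2$-purity that we need. If $X$ and $Y$ have $1/2$-pure étale cohomology, then so does $X\times Y$, by the Künneth formula for étale cohomology, which is compatible with the Frobenius action: the Frobenius eigenvalue on $H^{a+b}$ of a Künneth summand $H^a(X)\otimes H^b(Y)$ is $q^{a/2}\cdot q^{b/2}=q^{(a+b)/2}$, as required. Second, if $\widetilde{X}\to X$ is the blow-up of a smooth projective $X$ along a smooth closed subvariety $Z$ of codimension $c$, then there is a Frobenius-equivariant decomposition
\[
H^n_{et}(\widetilde{X}\times_K\overline{K};\mathbb{F}_\ell)\;\cong\; H^n_{et}(X\times_K\overline{K};\mathbb{F}_\ell)\;\oplus\;\bigoplus_{j=1}^{c-1} H^{n-2j}_{et}(Z\times_K\overline{K};\mathbb{F}_\ell)(-j),
\]
where $(-j)$ is the $j$-th Tate twist, which multiplies the Frobenius eigenvalue by $q^{j}$. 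Thus a class living in $H^{n-2j}(Z)$ — which by induction has Frobenius eigenvalue $q^{(n-2j)/2}$ — contributes to $H^n(\widetilde X)$ with eigenvalue $q^{(n-2j)/2}\cdot q^{j}=q^{n/2}$. Hence $1/2$-purity is preserved under blowing up smooth projective $1/2$-pure centers inside smooth projective $1/2$-pure ambient varieties.

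With these two facts in hand, the induction is immediate. The base cases are $\overline{\mathcal{M}_{0,3}}=\operatorname{pt}$ and $\overline{\mathcal{M}_{0,4}}\cong\mathbb{P}^1$, both $1/2$-pure. For the inductive step, one uses Keel's description of $\overline{\mathcal{M}_{0,n}}$ as an iterated blow-up starting from a product of projective spaces (or from $(\mathbb{P}^1)^{n-3}$ via the Kapranov construction), where every blow-up center is a product of $\overline{\mathcal{M}_{0,k}}$'s with $k<n$; by the inductive hypothesis and the Künneth statement these centers are $1/2$-pure, and the blow-up formula propagates $1/2$-purity to $\overline{\mathcal{M}_{0,n}}$. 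I expect the main technical point to be checking that the blow-up decomposition is genuinely Frobenius-equivariant with the stated Tate twists — this is standard (it follows from the projective bundle formula for $\mathbb{P}(N_{Z/X})$ together with proper base change and the Gysin sequence, all of which are Frobenius-compatible), but it is the step where one must be careful about the normalization of weights, especially since the weight is only defined modulo $h$. One should also confirm that the smooth base change and comparison isomorphisms of the previous subsection let us pass freely between $H^*_{et}(\overline{\mathcal{M}_{0,n}}\times_K\overline{K})$ and the Betti cohomology of the complex points, so that the Frobenius action is the one intended.
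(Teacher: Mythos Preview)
Your proposal is correct and follows essentially the same approach as the paper: establish that $1/2$-purity is stable under products (K\"unneth) and under blow-ups along smooth $1/2$-pure centers (blow-up formula with Tate twists), then induct on $n$ using Keel's iterated blow-up description with base cases $\overline{\mathcal{M}}_{0,3}=\mathrm{pt}$ and $\overline{\mathcal{M}}_{0,4}\cong\mathbb{P}^1$. The only minor imprecision is the starting variety for the blow-up sequence: Keel's construction starts from $\overline{\mathcal{M}}_{0,n}\times\overline{\mathcal{M}}_{0,4}$ (and Kapranov's from $\mathbb{P}^{n-3}$, not $(\mathbb{P}^1)^{n-3}$), but this does not affect the argument since any of these starting points is $1/2$-pure by induction or the base case.
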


\begin{proof}
A smooth scheme over $K$ is said to be $1/2$-pure if its \'etale cohomology is $1/2$-pure. We first make the following claim.
\begin{itemize}
\item[$\centerdot$] The set of $1/2$-pure schemes is stable under finite products.
\item[$\centerdot$] If $Z\to X$ is a closed embedding of smooth schemes and $Z$ and $X$ are $1/2$-pure, then the blow-up $B_Z(X)$ is also $1/2$-pure.
\end{itemize}

The first property is an immediate consequence of the K\"unneth formula in \'etale cohomology. The second property follows from the blow-up formula which gives an equivariant isomorphism
\[H_{et}^*(B_Z(X);\mathbb{F}_{\ell})\cong H_{et}^*(X;\mathbb{F}_{\ell})\oplus\left(\bigoplus_{i=1}^{c-1} H_{et}^*(Z;\mathbb{F}_{\ell})[-2i]\otimes_{\mathbb{F}_{\ell}}\mathbb{F}_{\ell}(-i)\right)\]
where $\mathbb{F}_{\ell}(-i)$ denotes the Tate module in which the Frobenius acts by multiplication by $q^i$ and $c$ denoted the codimension of $Z$ in $X$. Now, we can prove the proposition by induction on $n$. For $n=3$, this moduli space is a point. For $n=4$, we have $\overline{\mathcal{M}}_{0,4}\cong\mathbb{P}^1$ and the proposition is a classical computation. Assume that the proposition has been proved for $\{3,4,\cdots, n\}$. We may use Keel's inductive description of $\overline{\mathcal{M}}_{0,n+1}$ as a sequence of blow-ups starting from $\overline{\mathcal{M}}_{0,n}\times\overline{\mathcal{M}}_{0,4}$ and in which, at each stage, the variety that is blown-up is isomorphic to $\overline{\mathcal{M}}_{0,p+1}\times \overline{\mathcal{M}}_{0,q+1}$ with $p+q=n$, see \cite[Section 1]{Keel92}. We conclude by the induction hypothesis and the first claim of the proof.
\end{proof}

\begin{example}
Consider the operad $\mathcal{O}=\{\overline{\mathcal{M}}_{0,*+1}\}$ of moduli spaces of stable algebraic curves of genus $0$ of Example \ref{moduli}. Let us pick a prime number $p$ that is different from $\ell$ and such that $p$ has order $\ell -1$ in $\mathbb{F}_{\ell}^{\times}$ (such a prime exists thanks to Dirichlet's theorem on arithmetic progressions). The hypothesis of Theorem \ref{frobenius} are satisfied with $\alpha=1/2$ and $h=\ell -1$ thanks to Proposition \ref{frobenius action on cohomology of moduli space} above. In this case, the operad $\P$ is the colored operad whose algebras are operads. This operad is homotopically sound. We can therefore conclude that the dg- operad $C_*(\mathcal{O};\mathbb{F}_{\ell})$ is $2(\ell -2)$ formal.
\end{example}

\begin{example}Let $\mathcal{D}_2$ denotes the little disks operad. This is not quite an operad in the category of smooth schemes but in any case, one can construct a model $\mathcal{E}$ of $C_*(\mathcal{D}_2; \mathbb{F}_{\ell})$ equipped with an action of the profinite Grothendieck-Teichmüller group $\widehat{GT}$. This is very similar to what we explained in Example \ref{little disks operad}. The group $\widehat{GT}$ is equipped with a surjective homomorphism
\[\widehat{GT}\to\widehat{\mathbb{Z}}^{\times} \cong\prod_{p}\mathbb{Z}_p^{\times}\]
where $\widehat{\mathbb{Z}}^\times$ is the group of units in the profinite completion of the ring of integers. We may pick an element $\varphi$ of $\widehat{GT}$ that lifts $p\in\mathbb{Z}_{\ell}^{\times}$ where $p$ is some prime number distinct from $\ell$. Then, it can be shown that $\mathcal{E}$ equipped with the automorphism $\varphi$ is an operad in the category of Tate complexes. Moreover, the homology $H_*(\mathcal{D}_2(k);\mathbb{F}_{\ell})$ is $1$-pure for each $k$. Using the method of Theorem \ref{frobenius} we deduce that the operad $\mathcal{E}$, and therefore the operad \[C_*(\mathcal{D}_2(k);\mathbb{F}_{\ell}) \ ,\] is $(\ell -2)$-formal, see \cite[Theorem 6.7.]{CH22}. It should be noted that this result is sharp since it can be shown that the operad $C_*(\mathcal{D}_2;\mathbb{F}_{\ell})$ is not $(\ell -1)$-formal. 	
\end{example}

\section{\textcolor{bordeau}{Homotopy transfer and formality}}\label{7}

Formality aims to measure if the induced structure in homology retains all of the homotopical information contained in a given algebra. Through an operadic approach, one can make this intuition precise and derive another characterization of formality: \emph{gauge formality}. In this section, we present this other approach and formality criteria based on it.

\begin{assumption}\leavevmode
	\begin{itemize}
		\item[$\centerdot$] We suppose that every operad or cooperad in $\mathrm{Ch}_*(R)$ is reduced, connected and has an additional weight grading. 
		
		\item[$\centerdot$] Let $\P$ be an operad in the category of $R$-modules, concentrated in degree zero. We assume that either $R$ is a $\QQ$-algebra or that $\P$ is a non-symmetric operad. 
		
		\item[$\centerdot$] Let $\C$ be a conilpotent cooperad over $R$, with coaugmentation coideal $\overline{\C}$ concentrated in strictly positive degree.  We assume that we are given the datum of a Koszul morphism $\C\to\P$, i.e. a twisting morphism that induces a quasi-isomorphism
		\[\P_{\infty} \coloneqq  \Omega \C\stackrel{\sim}{\longrightarrow} \P \ .\]
	\end{itemize}
\end{assumption}

\subsection{Gauge formality}
There exist several equivalent characterizations of a $\P_{\infty}$-algebra structure. We are going to use the one in terms of coderivations, see \cite[Section~10.1]{LodayVallette12} for more details. 

\begin{proposition}\label{dix-neuf}
	A $\P_{\infty}$-algebra structure on a chain complex $A$ is equivalent to a codifferential of $\C(A)$, i.e. a degree $-1$ square-zero coderivation \[M \in \mathrm{Coder}(\C(A))\]  of the cofree conilpotent coalgebra $\C(A)$. 	An \emph{$\infty$-morphism} between two $\P_{\infty}$-algebras 	\begin{center}
		\begin{tikzcd}[column sep=normal]
			F :  (A,M) \ar[r,squiggly] 
			& (A', M') \ .
		\end{tikzcd} 
	\end{center}
	is a map of dg $\C$-coalgebras $(\C(A), M) \to (\C(A'), M')$.
\end{proposition}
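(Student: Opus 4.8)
The plan is to obtain both assertions from the ``Rosetta Stone'' combining the bar--cobar adjunction, the convolution Lie algebra, and the universal property of cofree conilpotent coalgebras, as in \cite[Section~10.1]{LodayVallette12}. By definition a $\P_\infty$-algebra structure on $A$ is a morphism of dg operads $\Omega\C\to\End_A$, where $\End_A$ is the endomorphism operad of the chain complex $A$. Since $\Omega$ is left adjoint to the bar construction on augmented dg (co)operads, such morphisms are in natural bijection with twisting morphisms $\alpha\colon\C\to\End_A$, that is, with Maurer--Cartan elements of the convolution dg Lie algebra
\[\g_A \coloneqq \big(\Hom_{\S}(\C,\End_A),\ \partial,\ [-,-]\big),\]
whose differential $\partial$ is induced by $d_\C$ and $d_{\End_A}$ (the latter encoding $d_A$), and whose bracket is the skew-symmetrization of the convolution product.

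Next I would identify $\g_A$ with a space of coderivations. Since $\C$ is conilpotent, $\C(A)$ is the cofree conilpotent $\C$-coalgebra cogenerated by $A$, so any coderivation $D$ of $\C(A)$ is uniquely determined by its corestriction $\mathrm{proj}_A\circ D\colon\C(A)\to A$. Unwinding the definition of $\C(A)$ then produces a linear bijection
\[\mathrm{Coder}(\C(A))\ \xrightarrow{\ \cong\ }\ \Hom(\C(A),A)\ \cong\ \Hom_{\S}(\C,\End_A)=\g_A,\]
and the crucial point is that this bijection intertwines the commutator bracket of coderivations with the convolution bracket, and the operator $[d_{\C(A)},-]$ with the differential $\partial$. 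Granting this, a degree $-1$ square-zero coderivation (a codifferential) $M$ of $\C(A)$ decomposes as $M=d_{\C(A)}+d_\alpha$ with $d_\alpha$ the coderivation attached to some $\alpha\in\g_A$, and the identity $M^2=0$ unwinds to the Maurer--Cartan equation $\partial(\alpha)+\tfrac12[\alpha,\alpha]=0$; combined with the first paragraph this identifies $\P_\infty$-algebra structures on $A$ with codifferentials of $\C(A)$. The analogous argument for morphisms uses cofreeness of $\C(A')$: a morphism of graded $\C$-coalgebras $\C(A)\to\C(A')$ is uniquely determined by its corestriction $\C(A)\to A'$, and commutation with the codifferentials $M$ and $M'$ unravels, component by component, into the defining equations of an $\infty$-morphism of $\P_\infty$-algebras.

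The main obstacle is the verification in the second paragraph that $\mathrm{Coder}(\C(A))\cong\g_A$ is compatible with brackets and differentials: this requires expanding the cofree $\C$-coalgebra structure on $\C(A)$ and carefully tracking Koszul signs through the (co)operadic composition maps, which is precisely the technical heart of the Rosetta Stone. I would either carry it out by hand in low arity and weight and extrapolate, or invoke \cite[Section~10.1]{LodayVallette12} directly.
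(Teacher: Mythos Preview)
Your proposal is correct and follows exactly the approach the paper points to: the paper does not give an independent proof of this proposition but simply refers the reader to \cite[Section~10.1]{LodayVallette12} (the Rosetta Stone), and your sketch is a faithful outline of that argument. Note that the second assertion in the statement is actually a \emph{definition} of $\infty$-morphism in this paper rather than a claim to be proved, so your final paragraph on morphisms is not strictly needed here, though it is correct as an equivalence with the component-wise description.
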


\begin{remark}
	A coderivation $M$ is completely determined by its projection on the cogenerators $m : \C(A) \to A \ .$ In other words, there is an isomorphism
	\[\mathrm{Coder}(\C(A))\cong \Hom(\C(A);A) \ , \] see \cite[Proposition~6.3.8]{LodayVallette12} for more details. These two points of view will be used equally by keeping the uppercase letters for the coderivations and the lowercase letters for the associated projections. Similarly, an $\infty$-morphism $F$ is completely determined by its projection $f : \C(A) \to A'$. 
\end{remark}

For every coderivation $M$, we denote by $m_i$ the restriction to \[\C_i(A) \coloneqq \bigoplus_{k \in \mathbb{N}} \C_i(k) \otimes_{\S_k} A^{\otimes k} \ . \] Similarly, we denote by $f_i$ the restriction of an $\infty$-morphism $F$ to $C_i(A)$. Since $\C_0 = I$, the component $f_0$ is an endomorphism $A \to A$.

\begin{definition}
	An $\infty$-morphism $F : A \rightsquigarrow A'$ is an \emph{$\infty$-quasi-isomorphism} (resp. $\infty$-isomorphism) if its first component $f_0 : A \to A'$ is a quasi-isomorphism (resp. isomorphism). 
\end{definition}

Given a $\P_{\infty}$-algebra $(A,M)$, there is a natural induced $\P$-algebra structure $(H(A),m_*)$ on the homology. In general, this structure forgets a part of the homotopical information: most structures are not formal. However, when there is a homotopy retraction between $A$ and $H(A)$, there is another way to transfer a given structure to the homology without loss of homotopical information. It is given by the homotopy transfer theorem.

\begin{theorem}[Homotopy transfer theorem] 
	Let $A$ be a chain complex and let  \[
	\hbox{
		\begin{tikzpicture}
			
			\def\upshift{0.075}
			\def\downshift{0.075}
			\pgfmathsetmacro{\midshift}{0.005}
			
			\node[left] (x) at (0, 0) {$(A,d)$};
			\node[right=1.5 cm of x] (y) {$(H(A),0)$};
			
			\draw[->] ($(x.east) + (0.1, \upshift)$) -- node[above]{\mbox{\tiny{$p$}}} ($(y.west) + (-0.1, \upshift)$);
			\draw[->] ($(y.west) + (-0.1, -\downshift)$) -- node[below]{\mbox{\tiny{$i$}}} ($(x.east) + (0.1, -\downshift)$);
			
			\draw[->] ($(x.south west) + (0, 0.1)$) to [out=-160,in=160,looseness=5] node[left]{\mbox{\tiny{$h$}}} ($(x.north west) - (0, 0.1)$);
	\end{tikzpicture}} \] be a homotopy retraction where $i$ is a quasi-isomorphism, $ip - \mathrm{id}_A = d_A h + h d_A\ ,$  and $pi = \mathrm{id}_{H(A)} \ .$ Let $(A,M)$ be a $\P_{\infty}$-algebra structure.	
	\begin{enumerate}
		\item There exists a transferred $\P_{\infty}$-algebra structure $(H(A), M^{t})$ such that \[m^t_1 = m_* \ .\]
		\item The inclusion $i$ and the projection $p$ extend to mutually quasi-inverse $\infty$-quasi-isomorphisms, which we will denote by $i_{\infty}$ and $p_{\infty}$.	
		\item The transferred structure is independent of the choice of sections of $H(A)$ on $A$ in up to $\infty$-isomorphisms.
	\end{enumerate}
\end{theorem}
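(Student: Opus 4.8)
The plan is to transport the codifferential $M$ along a homotopy retraction at the level of cofree $\C$-coalgebras, using the homological perturbation lemma, and then to check that the resulting data is compatible with the cooperadic structure; this is the operadic perturbation-theoretic argument of \cite[Section~10.3]{LodayVallette12}. By Proposition~\ref{dix-neuf}, a $\P_\infty$-structure on $A$ is a square-zero degree $-1$ coderivation $M$ of $\C(A)$, which I split as $M=\partial+\mu$, where $\partial$ is the coderivation induced by the internal differentials of $A$ and of $\C$, and $\mu=M-\partial$ encodes the genuine operations. By the standing Assumptions ($\overline{\C}$ concentrated in positive degree, $\C$ connected and conilpotent) the corestriction of $\mu$ vanishes on the weight-zero part $A\subset\C(A)$, so $\mu$ strictly decreases the $\C$-weight. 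On the target, $(H(A),0)$ produces the coderivation $\bar\partial$ of $\C(H(A))$ coming from the differential of $\C$.

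First I would upgrade $(i,p,h)$ to a \emph{full} contraction, that is, arrange in addition $h^2=0$, $hi=0$ and $ph=0$; a standard modification of $h$ that leaves $i$ and $p$ unchanged achieves this without disturbing the homotopy identities. Then the ``tensor trick'' — setting $h_n=\sum_{j=1}^{n}(ip)^{\otimes(j-1)}\otimes h\otimes\id_A^{\otimes(n-j)}$ — makes $A^{\otimes n}\rightleftarrows H(A)^{\otimes n}$ into an $\S_n$-equivariant full contraction; passing to (co)invariants, which is legitimate because $R$ is a $\QQ$-algebra or $\P$ is non-symmetric, and assembling over arities by means of the cocomposition of $\C$, one obtains a full contraction $(\C(A),\partial)\rightleftarrows(\C(H(A)),\bar\partial)$ whose structure maps $(I,P,H_{\C})$ are compatible with the cofree $\C$-coalgebra structures (in particular $I$ is a morphism of dg $\C$-coalgebras).

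Because $\mu$ strictly decreases the $\C$-weight, the composite $\mu H_{\C}$ is locally nilpotent, so $(\id-\mu H_{\C})^{-1}=\sum_{k\ge 0}(\mu H_{\C})^{k}$ converges and the homological perturbation lemma produces a full contraction between $(\C(A),M)$ and $(\C(H(A)),M^{t})$ with
\[
M^{t}=\bar\partial+P\,(\id-\mu H_{\C})^{-1}\,\mu\, I,\qquad i_\infty=(\id-H_{\C}\,\mu)^{-1}\,I,\qquad p_\infty=P\,(\id-\mu H_{\C})^{-1},
\]
and perturbed homotopy $H_{\C}^{t}=H_{\C}(\id-\mu H_{\C})^{-1}$; in particular $M^{t}$ is square-zero of degree $-1$, $p_\infty i_\infty=\id$ and $i_\infty p_\infty-\id=M H_{\C}^{t}+H_{\C}^{t} M$. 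Unwinding the cocomposition of $\C$, these formulas become the classical sums over rooted trees with vertices decorated by the operations of $A$, internal edges by $h$, leaves by $i$ and the root by $p$; from this one reads off that $M^{t}$ corestricts to the cogenerators $H(A)$, hence is a genuine coderivation — thus a $\P_\infty$-structure by Proposition~\ref{dix-neuf} — whose weight-one component is $m^{t}_1=p\circ m_1\circ(i,\ldots,i)=m_*$, which gives (1). Likewise $i_\infty$ and $p_\infty$ are $\infty$-morphisms with weight-zero components the quasi-isomorphisms $i$ and $p$, satisfying $p_\infty i_\infty=\id$ with $i_\infty p_\infty$ $\infty$-homotopic to the identity via $H_{\C}^{t}$; this gives (2).

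For (3), given two choices of section producing $(H(A),M^{t})$ and $(H(A),(M')^{t})$ with associated maps $p_\infty$ and $i'_\infty$, the composite $\infty$-morphism $p_\infty\circ i'_\infty\colon(H(A),(M')^{t})\rightsquigarrow(H(A),M^{t})$ has weight-zero component $p\circ i'\colon H(A)\to H(A)$, which is a quasi-isomorphism of complexes with zero differential, hence an isomorphism; therefore $p_\infty\circ i'_\infty$ is an $\infty$-isomorphism, as desired. The step I expect to be the real obstacle is exactly the cooperadic compatibility invoked in the previous two paragraphs: the bare perturbation lemma outputs only chain-level data, and one must verify — through the tree description, or by a filtration argument on the $\C$-weight — that the lifted contraction respects the cofree coalgebra structures, so that $M^{t}$ is a coderivation and $i_\infty$, $p_\infty$ are morphisms of dg $\C$-coalgebras; the attendant bookkeeping of Koszul signs in the tensor trick and in the $\S_n$-(co)invariants is the other technical point.
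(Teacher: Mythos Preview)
Your proposal is correct and follows precisely the approach of the references the paper cites: the paper's own proof is simply ``We refer the reader to \cite{berglund}, \cite[Section~10.3]{LodayVallette12} and references therein,'' and you have faithfully expanded the perturbation-theoretic argument of \cite[Section~10.3]{LodayVallette12} (tensor trick, side conditions on the contraction, homological perturbation lemma applied to the cofree $\C$-coalgebra, tree formulas, and the verification that the output respects the coalgebra structure). Your identification of the cooperadic compatibility as the genuine technical point is accurate, and your argument for (3) via the composite $p_\infty\circ i'_\infty$ having invertible weight-zero component is the standard one.
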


\begin{proof} We refer the reader to \cite{berglund},  \cite[Section~10.3]{LodayVallette12} and references therein. 
\end{proof}

\begin{remark}
Of course, the existence of a homotopy retraction of this form is not automatic if the base ring $R$ is not a field. If the base ring is a principal ideal domain it holds if $H(A)$ is degreewise projective.
\end{remark}

\begin{definition} \label{massey}
	Let $n \in \mathbb{N}^*$ and a dg $\P$-algebra $(A,d,m)$ such that $H(A)$ is a homotopy retract of $A$. The algebra $A$ is said
	\begin{itemize}
		\item[$\centerdot$] \emph{gauge formal} if there exists an $\infty$-quasi-isomorphism	\begin{center}
			\begin{tikzcd}[column sep=normal]
				(H(A),M^t) \ar[r,squiggly,"\sim"] 
				& (H(A), m_*) \ .
			\end{tikzcd} 
		\end{center}
		\item[$\centerdot$] \emph{gauge $n$-formal} if there exist a $\P_{\infty}$-algebra structure $(H(A),R)$ with $r_0 = 0$ and $r_i = 0$ for $i$ such that $2 \leqslant  i \leqslant  n$ and  $\infty$-quasi-isomorphism \begin{center}
			\begin{tikzcd}[column sep=normal]
				(H(A), M^t) \ar[r,squiggly,"\sim"] 
				& (H(A), R) \ .
			\end{tikzcd} 
		\end{center}  
	\end{itemize}
\end{definition}

\begin{remark}\leavevmode
	\begin{enumerate}
		\item	Over a characteristic zero field, gauge formality is equivalent to formality, see \cite[Theorem~11.4.9]{LodayVallette12}. Over a general ring (and under the assumption that the operad $\P$ is nonsymmetric) this is also true under mild flatness hypothesis, see \cite[Proposition 1.15]{DCH21}.
		\item The terminology of gauge formality comes from the equivalence between the existence of $\infty$-quasi-isomorphisms is this situation and the existence of gauge equivalence in a certain dg Lie algebra, see  \cite[Section~2]{Kaledin} for more details. 
	\end{enumerate}
\end{remark}

\subsection{Automorphism lifts}

In \cite{DCH21}, Drummond-Cole and the second author present another proof of Theorem  \ref{purity implies formality} under sightly different assumptions and using gauge formality approach.

\begin{definition}
	Let $V$ be a graded $R$-module. Let $\alpha$ be a unit in $R$. The \emph{degree twisting} by $\alpha$, denoted $\sigma_{\alpha}$, is the linear automorphism of $V$ which acts on the degree $n$ homogenous component of $V$ via multiplication by $\alpha^n$.
\end{definition}

\begin{theorem}[{\cite[Main Theorem.]{DCH21}}]\label{Geoffroy-Gabriel} Let $A$ be a chain complex such that $H(A)$ is a homotopy retract. Let $(A, M)$ be a $\P_{\infty}$-algebra structure. Let $\alpha$ be a unit in $R$ and suppose that the degree twisting $\sigma_{\alpha}$ on $H(A)$ admits a chain level lift, i.e. there exists an $\infty$-quasi-isomorphism $v$ of $(A,M)$ such that $H\left(v_0\right) = \sigma_{\alpha}$.
	\begin{itemize}
		\item If $\alpha^k -1$ is a unit of $R$ for $k \leqslant  n$, then $(A,M)$ is gauge $n$-formal. 
		\item If $\alpha^k -1$ is a unit of $R$ for all $k$, then $(A,M)$ is gauge formal. 
	\end{itemize}
\end{theorem}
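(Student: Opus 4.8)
The plan is to transport both the $\P_\infty$-structure $M$ and the automorphism lift $v$ onto $H(A)$, and then run a weight-filtered obstruction argument of the type introduced by Kaledin \cite{Kal07} and developed in \cite{DCH21,Kaledin}. First I would apply the homotopy transfer theorem to the given homotopy retraction of $A$ onto $H(A)$, producing a transferred structure $(H(A),M^t)$ with $m^t_1=m_*$ together with mutually quasi-inverse $\infty$-quasi-isomorphisms $i_\infty\colon(H(A),M^t)\rightsquigarrow(A,M)$ and $p_\infty\colon(A,M)\rightsquigarrow(H(A),M^t)$. The composite $v^t\coloneqq p_\infty\circ v\circ i_\infty$ is then an $\infty$-quasi-isomorphism of $(H(A),M^t)$; since its source has zero differential, $v^t_0$ coincides with the map it induces on homology, and a short computation using $p\circ i=\mathrm{id}_{H(A)}$ together with the fact that $\sigma_\alpha$ commutes with degree-$0$ maps gives $v^t_0=\sigma_\alpha$.

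Next I would compare $M^t$ with its rescaling. Conjugation by $\sigma_\alpha$, viewed as a strict automorphism of the graded module $H(A)$, multiplies an operation of homological degree $d$ by $\alpha^d$; with the grading conventions of the standing assumptions the weight-$w$ component of a $\P_\infty$-structure has degree $w-1$, so $\sigma_\alpha$ is a strict isomorphism from $(H(A),M^t)$ onto the structure $M^t_\alpha$ whose weight-$w$ component is $\alpha^{w-1}m^t_w$. In particular the weight-$1$ part is unchanged, so both structures refine the same $\P$-algebra $(H(A),m_*)$. Precomposing $v^t$ with $\sigma_\alpha^{-1}$ then yields an $\infty$-quasi-isomorphism
\[
\varphi\colon (H(A),M^t_\alpha)\ \rightsquigarrow\ (H(A),M^t)
\]
whose linear part is the identity.

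The heart of the proof is an obstruction argument in the complete weight-graded convolution dg Lie (or $L_\infty$) algebra that governs $\P_\infty$-structures on $H(A)$ refining $m_*$, on which the pro-unipotent group of $\infty$-automorphisms with identity linear part acts; the conilpotency of $\C$ and the Koszul morphism $\C\to\P$ from the standing assumptions place us in the setting of \cite[Section~2]{Kaledin} and \cite[Section~1]{DCH21}. I would induct on $w\geqslant 2$: given an $\infty$-quasi-isomorphism with identity linear part from $(H(A),M^t)$ to a structure $N$ whose weight-$i$ component vanishes for $2\leqslant i<w$ (vacuous when $w=2$), the weight-$w$ component $n_w$ is a cocycle and $[n_w]$ is the obstruction to killing it by a gauge transformation trivial below weight $w$. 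Transporting $\varphi$ and the strict isomorphism $\sigma_\alpha$ through this equivalence and clearing the components below weight $w$ produces an $\infty$-quasi-isomorphism with identity linear part, trivial below weight $w$, from the weight-$w$-rescaling of $N$ to $N$; unpacking its defining equation in weight $w$ (cf.\ Proposition~\ref{dix-neuf}) yields $(\alpha^{w-1}-1)[n_w]=0$. Hence, whenever $\alpha^{w-1}-1$ is a unit of $R$, we get $[n_w]=0$ and the weight-$w$ component can be gauged away. If $\alpha^k-1$ is a unit for $k\leqslant n$ this runs for $w=2,\dots,n$ and produces an $\infty$-quasi-isomorphism $(H(A),M^t)\rightsquigarrow(H(A),R)$ with $r_0=0$, $r_1=m_*$ and $r_i=0$ for $2\leqslant i\leqslant n$, i.e.\ gauge $n$-formality in the sense of Definition~\ref{massey}; if $\alpha^k-1$ is a unit for all $k$ it runs for every $w$, and completeness of the convolution algebra for the weight filtration makes the infinite composite of the gauge transformations converge to an $\infty$-quasi-isomorphism $(H(A),M^t)\rightsquigarrow(H(A),m_*)$, i.e.\ gauge formality.

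I expect the main obstacle to be this last step: on the one hand the bookkeeping showing that conjugation by $\sigma_\alpha$ rescales the weight-$w$ operations by the \emph{nonzero} scalar $\alpha^{w-1}$, so that the hypotheses on $\alpha^k-1$ are precisely what the successive stages require; and on the other hand the delicate filtered argument that makes each obstruction class well defined and forces it, via the mere existence of $\varphi$, to be an $\alpha^{w-1}$-eigenvector for the induced action.
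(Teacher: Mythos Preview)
The paper does not give its own proof of this theorem; it is stated with a citation to \cite{DCH21} and followed only by a remark and examples. Your proposal faithfully reconstructs the strategy of \cite{DCH21}: transfer both $M$ and the lift $v$ to $(H(A),M^t)$ via the homotopy transfer theorem, compose to get an $\infty$-automorphism $v^t$ with linear part $\sigma_\alpha$, use the strict isomorphism $\sigma_\alpha$ to compare $M^t$ with its rescaling $M^t_\alpha$, and then run an inductive obstruction argument in the weight-filtered convolution Lie algebra to gauge away the higher components one at a time. This is exactly the approach of the cited reference, and the later subsection on Kaledin classes in this survey (Proposition~\ref{troncation nulle} and the theorem following it) is essentially a repackaging of the same obstruction mechanism.

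One point deserves care. Your claim that the weight-$w$ component of a $\P_\infty$-structure has homological degree exactly $w-1$ is not a consequence of the standing assumptions as literally written here: the survey only asks that $\overline{\C}$ lie in strictly positive homological degree, not that $\C_w$ be concentrated in degree $w$. Since conjugation by $\sigma_\alpha$ rescales an operation by $\alpha$ to the power of its \emph{homological degree}, the equation $(\alpha^{?}-1)[n_w]=0$ you extract involves that degree, and you need it to equal $w-1$ for the hypothesis ``$\alpha^k-1$ invertible for $k\leqslant n$'' to feed the induction for $w=2,\ldots,n+1$. In \cite{DCH21} this is handled by an explicit hypothesis on the weight--degree relationship in $\C$; you should check that assumption and invoke it rather than deriving it from the survey's standing hypotheses. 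You rightly flag this bookkeeping as the crux, and once it is nailed down your argument is complete.
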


\begin{remark}
	This result generalizes to more general types of algebraic structures (colored operads, properads,...) and to other types of homology automorphism, see \cite[Theorem~4.10]{Kaledin}.
\end{remark}

\begin{example}[Complement of subspace arrangements]\label{complement of subspaces arrangements}
	Let $X$ be a complement of hyperplanes arrangement over $\mathbb{C}$, i.e. a complement of a finite collection of affine hyperplanes in $\mathbb{A}^n_{\mathbb{C}}$ viewed as a scheme over $\mathbb{C}$. Let $p$ and $\ell$ be two different prime numbers. Suppose that $X$ can be defined over a finite extension $K$ of $\mathbb{Q}_p$, i.e. there exist an embedding $K \to \Co$ and a complement of a hyperplane arrangement over $K$ denoted $\mathcal{X}$ such that 
	\[X \cong \mathcal{X} \times _{K} \Co \ .\] 
We denote by $q$ cardinality of the residue field of the ring of integers of $K$ and $h$ the order of $q$ in the group of units of $\mathbb{F}_\ell$. We can apply \ref{Geoffroy-Gabriel} to prove that the algebra  $C^*_{\mathrm{sing}}(X_{\mathrm{an}};  \mathbb{Z}_{\ell})$ is $(h-1)$-formal, where $X_{\mathrm{an}}$ denotes the complex analytic space underlying $X_{\mathbb{C}} = X \times _{K} \mathbb{C}$ and where $s$ is the order of $q$ in $\mathbb{F}_{\ell}^{\times}$~. Indeed, as in the previous section, there exists a zig-zag of quasi-isomorphisms of dg associative algebras
	\[C^*_{\mathrm{sing}}(X_{\mathrm{an}};  \mathbb{Z}_{\ell}) \xleftarrow{\sim} C^*_{\mathrm{\acute{e}t}}(\mathcal{X}_{\mathbb{C}};  \mathbb{Z}_{\ell}) \xrightarrow{\sim} C^*_{\mathrm{\acute{e}t}}(\mathcal{X}_{\overline{K}};  \mathbb{Z}_{\ell}) \ .\] One can show that the action of a Frobenius lift on $H^n_{et}(\mathcal{X}_{\overline{K}};  \mathbb{Z}_{\ell})$ is given by multiplication by $q^n$ (see \cite[Theorem 1']{Kim94} for a proof). 
\end{example}
	
\begin{remark}
	The condition of being defined over $K$ is essential here: for each $\ell$ there exists a complement hyperplane arrangement defined over $\Co$ inducing non-trivial Massey products in $H^2( -; \mathbb{F}_{\ell})$, see \cite{mateimassey}. 
\end{remark}

\begin{remark}
The notion of gauge $n$-formality is fundamental in this example. In \cite[Theorem 7.12, (iii)]{CH22} it is wrongly claimed that under these assumptions, $C^*_{sing}(X_{an};\mathbb{F}_\ell)$ is $(h-1)$-formal in the sense that there is a zig-zag of morphisms connecting this dg-algebra to its cohomology and that induce isomorphisms in cohomological degree $\leq h-1$. This statement is incorrect as explained in \cite{CH22corrigendum}. Gauge $n$-formality seems to be the best way to express the kind of partial formality that we have in this situation. On the other hand, similar results hold for complements of subspace arrangements of higher codimension and in this case the two notions of partial formality can be used. 
\end{remark}

\begin{example}[Coformality of configuration spaces]\label{coformality}
	For $d \geqslant  3$, the configuration space $X = \mathrm{Conf}_n(\mathbb{R}^d)$ is coformal, i.e. the dg algebra $C_*(\Omega X; \mathbb{Z}_{\ell})$ is $(\ell-2)(d-2)$-formal, for a given prime number $\ell$, see \cite[Theorem~4.16]{DCH21}. This result is proved using an action of the profinite Grothendieck-Teichm\"uller group on these spaces. Let us mention that the terminology coformal is unusual in this context. Usually a space is called coformal if the Quillen model for its rational homotopy type is formal as a differential graded Lie algebra. This definition does not generalize well when the ring of coefficients is not a field of characteristic zero. However, Saleh has proved that coformality of a based connected topological space $X$ is equivalent to formality of the differential graded algebra of chains on its based loop space $C_*(\Omega X;\mathbb{Q})$ (see \cite{Sal17}). The latter condition makes perfect sense for any ring of coefficients and explains our choice of terminology.
\end{example}

\subsection{Kaledin classes}
Given a dg associative algebra over a characteristic zero field, Kaledin constructs in \cite{Kal07} a class in the associated Hochschild cohomology, which vanishes if and only if the algebra is formal. The work of Kaledin was extended by Melani and Rubió in \cite{MR19} for algebras over a binary Koszul operad in characteristic zero and by the first author in \cite{Kaledin} for algebras over groupoid colored operad or properad, over a commutative ground ring $R$. \bigskip

Let $H$ be a graded $R$-module and let $(H, M)$ be a $\P_{\infty}$-algebra structure. Recall that the complex $ \mathrm{Coder}(\C(H))$ can be equipped with a complete dg Lie algebra structure, whose filtration is defined for all $p \geqslant  0$ by $$\mathcal{F}^p \mathrm{Coder}(\C(A)) \coloneqq \prod_{k \geqslant  p } \Hom \left(\C_p(A), A\right) \ , $$ and whose differential is given by the operator $d_M \coloneqq [M, -]$  see e.g. \cite[Section 6.4]{LodayVallette12}.  Let us consider the \emph{prismatic decomposition} \[\partial_{\hbar} M = m_2 + 2m_3 + 3m_4 + \cdots  \] This element is a cycle in $ \mathfrak{g}^M \coloneqq(\mathcal{F}^1\mathfrak{g}, d_M)$, see e.g. \cite[Proposition~1.14]{Kaledin}.

\begin{definition}[Kaledin class]
	The Kaledin class of $(H,M)$ is the class \[K_M \coloneqq \left[\partial_{\hbar} M\right] \in H_{-1}\left( \mathfrak{g}^M\right) \ . \]
	
	\noindent The \emph{n-truncation} $K_M^{ n}$ of $K_M$ is the class associated to the cycle $$m_2 + 2m_3 + \cdots + (n-1) m_{n}$$ in the cohomology of the complex $\mathcal{F}^1 \mathrm{Coder}(\C(A))/ \mathcal{F}^{n+1}$. 
\end{definition}

\begin{lemma}[Invariance of the Kaledin classes under $\infty$-quasi-isomorphism] \label{invariance par iso} An $\infty$-isomorphism between two $\P_{\infty}$-algebras	$F : (H,M) \rightsquigarrow (H,N)$ induces an isomorphism of dg Lie algebras $F : \mathfrak{g}^M \to \mathfrak{g}^N $ and the following equality holds \[ K_{N} = \left[F\left( \partial_{\hbar} M \right) \right] \ . \] The same goes for the $n$-truncations.
\end{lemma}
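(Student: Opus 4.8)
The plan is to unwind the definitions: an $\infty$-isomorphism $F : (H,M) \rightsquigarrow (H,N)$ is by Proposition \ref{dix-neuf} a map of dg $\C$-coalgebras $(\C(H), M) \to (\C(H), N)$ whose first component $f_0$ is an isomorphism of $H$. First I would recall that $F$ is invertible in the category of $\infty$-morphisms precisely because $f_0$ is invertible (the higher components of the inverse are built recursively), so conjugation $\xi \mapsto F \circ \xi \circ F^{-1}$ makes sense on $\mathrm{Coder}(\C(H))$. The fact that $F$ intertwines $M$ and $N$ says exactly $N = F M F^{-1}$ as coderivations, hence $d_N = [N,-] = F \circ [M, F^{-1}(-)F] \circ F^{-1} = F \circ d_M \circ F^{-1}$ after transporting along $F$; this is the statement that $F$ is an isomorphism of complexes $(\mathrm{Coder}(\C(H)), d_M) \to (\mathrm{Coder}(\C(H)), d_N)$. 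One then checks that $F$ preserves the filtration $\mathcal{F}^\bullet$ and its Lie bracket — here one uses that $F$ is an $\infty$-\emph{isomorphism}, so $f_0$ being invertible means $F$ does not shift filtration degree — giving the claimed isomorphism of complete dg Lie algebras $\mathfrak{g}^M \to \mathfrak{g}^N$.

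Next I would address the compatibility with the prismatic decomposition. The key point is that the operator $\partial_\hbar$ (multiplication of the arity-$i$ component $m_i$ by $i-1$, i.e. the derivation recording the cooperad weight grading) is natural with respect to weight-preserving maps, but an $\infty$-isomorphism need not be weight-homogeneous. So the correct statement is not $F(\partial_\hbar M) = \partial_\hbar N$ on the nose, but rather that $F(\partial_\hbar M)$ and $\partial_\hbar N$ are \emph{cohomologous} in $\mathfrak{g}^N$. I would prove this by differentiating the identity $N = F M F^{-1}$ with respect to the formal prismatic parameter $\hbar$: writing $M_\hbar, N_\hbar, F_\hbar$ for the rescalings (where $\hbar$ acts on arity $i$ by $\hbar^{i-1}$ or $\hbar^i$ as appropriate), one has $N_\hbar = F_\hbar M_\hbar F_\hbar^{-1}$, and applying $\hbar \frac{d}{d\hbar}|_{\hbar=1}$ and using the Leibniz rule gives
\[
\partial_\hbar N = F(\partial_\hbar M) F^{-1} + [\, \partial_\hbar F \cdot F^{-1}, N\,] = F(\partial_\hbar M) + d_N(\partial_\hbar F \cdot F^{-1})
\]
after transporting via $F$, where $\partial_\hbar F \cdot F^{-1}$ is an element of $\mathcal{F}^1 \mathrm{Coder}(\C(H))$. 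This exhibits $\partial_\hbar N - F(\partial_\hbar M)$ as a $d_N$-boundary, hence $K_N = [\partial_\hbar N] = [F(\partial_\hbar M)]$ in $H_{-1}(\mathfrak{g}^N)$, which is the assertion.

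The truncated statement follows by the same computation read modulo $\mathcal{F}^{n+1}$: the homotopy $\partial_\hbar F \cdot F^{-1}$ descends to $\mathcal{F}^1/\mathcal{F}^{n+1}$ and $F$ induces an isomorphism of the truncated complexes $\mathcal{F}^1\mathrm{Coder}(\C(H))/\mathcal{F}^{n+1}$ for $d_M$ and $d_N$, so $K_N^n = [F(\partial_\hbar^{\leqslant n} M)]$. The main obstacle I anticipate is purely bookkeeping: making rigorous the "differentiate in $\hbar$" argument, i.e. verifying that $\hbar \mapsto F_\hbar$ is genuinely polynomial (equivalently, that the prismatic rescaling is a morphism of the relevant structures) so that the Leibniz computation is legitimate, and keeping track of the precise grading conventions under which $\partial_\hbar$ is defined so that the boundary term lands in the right filtration piece. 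Everything else — invertibility of $\infty$-isomorphisms, conjugation being a dg Lie isomorphism, compatibility with filtrations — is routine and can be cited from \cite[Chapter 10]{LodayVallette12} or checked directly.
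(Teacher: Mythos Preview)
Your proof plan is correct and complete in its essential ideas. The paper itself does not supply a proof of this lemma: it simply cites \cite[Lemma~2.38]{Kaledin}, so there is no in-paper argument to compare against. Your approach via conjugation and differentiation of the prismatic rescaling $N_\hbar = F_\hbar M_\hbar F_\hbar^{-1}$ is the standard one and is, in substance, what is carried out in the cited reference.

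Two minor points of bookkeeping to watch when you write this out in full. First, be careful with the sign in $[\,\partial_\hbar F \cdot F^{-1}, N\,]$ versus $d_N = [N,-]$; the difference is a global sign on the homotopy, which is harmless for the conclusion but should be stated correctly. Second, the assertion that $(\partial_\hbar F)\cdot F^{-1}$ is a \emph{coderivation} (rather than merely a linear endomorphism of $\C(H)$) deserves one sentence: it is the infinitesimal version of the fact that $F_\hbar$ is a one-parameter family of coalgebra automorphisms, so its logarithmic derivative lands in the Lie algebra $\mathrm{Coder}(\C(H))$. You already identified that $\partial_\hbar f_0 = 0$ forces this element into $\mathcal{F}^1$, which is exactly what is needed for the boundary to live in $\mathfrak{g}^N$. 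With those two checks made explicit, the argument is airtight.
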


\begin{proof}
	We refer the reader to \cite[Lemma~2.38]{Kaledin}.
\end{proof}

\begin{proposition}\label{troncation nulle} Let $(H,M)$ be a $\P_{\infty}$-algebra. Let $n \geqslant 1$ such that $n !$ is a unit in $R$. The following propositions are equivalent. 
	\begin{enumerate}
		\item The $n$-truncation $K_{M}^{n}$ is zero.
		\item There exist a $\P_{\infty}$-algebra structure $(H,N)$ with $n_0 = 0$ and $n_i = 0$ for $i$ such that $2 \leqslant  i \leqslant  n$ and  $\infty$-quasi-isomorphism \begin{center}
			\begin{tikzcd}[column sep=normal]
				(H, M) \ar[r,squiggly,"\sim"] 
				& (H, N) \ .
			\end{tikzcd} 
		\end{center}  
	\end{enumerate}
\end{proposition}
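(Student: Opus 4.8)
\emph{Proof proposal.} The plan is to carry everything out inside the complete dg Lie algebra $\mathfrak{g} \coloneqq \mathrm{Coder}(\C(H))$, equipped with its weight filtration $\mathcal{F}^p\mathfrak{g} = \prod_{k \geqslant p}\Hom(\C_k(H),H)$ and the differential $d_M = [M,-]$, and to show that both (1) and (2) are equivalent to the single assertion: $M$ is gauge equivalent, modulo $\mathcal{F}^{n+1}\mathfrak{g}$, to a Maurer--Cartan element $N$ with $n_0 = 0$ and $n_i = 0$ for $2 \leqslant i \leqslant n$. As a preliminary I would record the structure behind the notation $\partial_\hbar$: the operator $E$ multiplying $\mathfrak{g}^{(k)} = \Hom(\C_k(H),H)$ by $k-1$ is a derivation of the graded Lie bracket commuting with the pre-differential, and $\partial_\hbar M = E(M)$ (this re-proves that $\partial_\hbar M$ is a $d_M$-cycle, since $E$ annihilates the Maurer--Cartan relation). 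For a unit $t$ the \emph{dilation} $\phi_t$ multiplying $\mathfrak{g}^{(k)}$ by $t^{k-1}$ is then an automorphism of the filtered dg Lie algebra; and the formula $M(\hbar) \coloneqq \sum_k \hbar^{k-1} m_k$ defines a Maurer--Cartan element over $R[\hbar]$ with $\tfrac{d}{d\hbar}M(\hbar)\big|_{\hbar = 1} = \partial_\hbar M$ and $M(0) = m_1$. Finally I would invoke the standard dictionary between gauge equivalences in $\mathcal{F}^1\mathfrak{g}$ (resp.\ $\mathcal{F}^2\mathfrak{g}$) and $\infty$-isomorphisms (resp.\ $\infty$-isomorphisms with identity linear part), valid here modulo $\mathcal{F}^{n+1}$ because $n!$ is a unit, together with the invariance of the truncated Kaledin class under $\infty$-isomorphisms from Lemma~\ref{invariance par iso}.

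\emph{Direction $(2)\Rightarrow(1)$.} If $N$ has $n_0 = 0$ and $n_i = 0$ for $2 \leqslant i \leqslant n$, then $\partial_\hbar N = \sum_{i \geqslant n+1}(i-1)n_i \in \mathcal{F}^{n+1}\mathfrak{g}$, so its class $K_N^n$ already vanishes. An $\infty$-quasi-isomorphism $(H,M)\rightsquigarrow(H,N)$ is an $\infty$-isomorphism whenever $H$ carries the zero differential (the situation in which the proposition is applied, $H$ being the homology $H(A)$ of Definition~\ref{massey}); Lemma~\ref{invariance par iso}, or its extension to $\infty$-quasi-isomorphisms using that $n!$ is invertible, then transports the vanishing, giving $K_M^n = 0$.

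\emph{Direction $(1)\Rightarrow(2)$.} This is the substantial direction. I would choose a degree-zero primitive $\xi \in \mathcal{F}^1\mathfrak{g}$ with $\partial_\hbar M - d_M\xi \in \mathcal{F}^{n+1}\mathfrak{g}$. Pushing this relation through the dilation and using that $\phi_\hbar$ intertwines $E$ and commutes with $d$, one obtains $E(M(\hbar)) = \phi_\hbar(E(M)) \equiv d_{M(\hbar)}(\phi_\hbar\xi) \pmod{\mathcal{F}^{n+1}}$, that is $\tfrac{d}{d\hbar}M(\hbar) \equiv d_{M(\hbar)}(\hbar^{-1}\phi_\hbar\xi) \pmod{\mathcal{F}^{n+1}}$: modulo $\mathcal{F}^{n+1}$ the whole family $M(\hbar)$ is gauge-constant. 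Integrating this infinitesimal gauge transformation from $\hbar = 1$ to $\hbar = 0$ — a formal flow which, after truncation, is a finite Magnus-type expansion whose denominators divide $n!$ — produces a gauge equivalence, modulo $\mathcal{F}^{n+1}$, between $M = M(1)$ and $M(0) = m_1$, hence an $\infty$-isomorphism (a fortiori an $\infty$-quasi-isomorphism) $(H,M)\rightsquigarrow(H,N)$ whose target satisfies $n_0 = 0$ and $n_i = 0$ for $2 \leqslant i \leqslant n$. The same argument unwinds to an induction on $k = 2,\dots,n$: having gauged $m_2,\dots,m_{k-1}$ away, the weight-$k$ component of the primitivity relation reads $(k-1)m_k = d_{m_1}\xi_k + [m_k,\xi_1] \pmod{\text{higher weight}}$, which exhibits $m_k$ — up to absorbing $\xi_1$ and a gauge transformation supported in weight $\geqslant k$ — as $\tfrac1{k-1}$ times a $d_{m_1}$-coboundary, so a single such move kills it, the factor $k-1$ being invertible since $k \leqslant n$.

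\emph{Main obstacle.} The technical core lies entirely in $(1)\Rightarrow(2)$: running the flow (equivalently the induction) over $R$ rather than over $\mathbb{Q}\otimes R$, which amounts to checking that every denominator occurring is a divisor of $n!$ — this is exactly where the hypothesis is spent. The delicate point within this is the weight-one component $\xi_1$ of the primitive: it produces an apparent pole at $\hbar = 0$ in the flow and a conjugation term $[m_k,\xi_1]$ in the induction, and it cannot be removed directly over $R$; it has to be worked into the argument by a preliminary $\infty$-isomorphism (a change of basis of $H$, legitimate since $\xi_1$ is an $m_1$-cocycle), and verifying that this can be done compatibly at each weight is the genuinely careful part of the proof. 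Everything else — that $\phi_t$ respects bracket, differential and filtration, that the gauge/$\infty$-isomorphism dictionary applies in this generality, and the weight-by-weight bookkeeping of the Maurer--Cartan and primitivity relations — is routine.
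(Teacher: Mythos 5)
Your inductive core for $(1)\Rightarrow(2)$ is the paper's own argument (the proof of \cite[Proposition~2.9]{MR19} reproduced in the text, with \cite[Proposition~2.32]{Kaledin} for general $R$): once $m_2=\cdots=m_{k-1}$ have been gauged away, the weight-$k$ part of the primitivity relation reduces to $(k-1)m_k=[m_1,\xi_{k-1}]$, one conjugates by the exponential of $\xi_{k-1}/(k-1)$, re-chooses a primitive for the new structure using Lemma~\ref{invariance par iso}, and iterates; the denominators are the integers $k-1\leqslant n-1$ together with the factorials occurring in $e^{\tau}$ modulo $\mathcal{F}^{n+1}$, which is exactly where the invertibility of $n!$ is spent. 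Your $(2)\Rightarrow(1)$ via invariance of the truncated class is also the paper's. So the substance is right.

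However, the ``main obstacle'' you isolate is a phantom, and your proposed remedy for it is incorrect. The pole $\hbar^{-1}\xi_1$ is an artifact of insisting on a continuous flow from $\hbar=1$ to $\hbar=0$; the discrete induction never meets it. More importantly, $\xi_1$ lies in $\Hom(\C_1(H),H)$: it assigns to each weight-one cogenerator an operation on $H$ of the corresponding arity, and it is \emph{not} an endomorphism of $H$ --- that would be a weight-zero component $\xi_0$, which is excluded since the primitive lives in $\mathcal{F}^1\mathrm{Coder}(\C(H))$. Hence ``a preliminary change of basis of $H$'' is neither available nor needed: $e^{\xi_1}$ is already an $\infty$-isomorphism with identity linear component $f_0=\mathrm{id}_H$, and it is precisely the gauge move at the first stage $k=2$, where the divisor $k-1=1$ causes no difficulty. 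Relatedly, your displayed relation $(k-1)m_k=d_{m_1}\xi_k+[m_k,\xi_1]$ has its weights off by one: $[m_1,\xi_j]$ lands in weight $j+1$, so the weight-$k$ equation involves $\xi_{k-1}$, and the only term containing $\xi_1$ is $[m_{k-1},\xi_1]$, which vanishes for $k\geqslant 3$ by the inductive hypothesis (and equals the whole right-hand side for $k=2$). Dropping the flow wrapper and running the induction directly, as the paper does, removes every difficulty you flag.
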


\begin{proof} 
	Let us suppose that $R$ is a characteristic zero field and present the proof of \cite[Proposition~2.9]{MR19}. If $(2)$ holds, the Kaledin class $K_N^{n}$ is zero and so does $K_M^{n}$ by Lemma \ref{invariance par iso}. Let us prove the converse result by induction on $n$. The case $n=1$ is clear. Suppose that the result holds for $n-1$ and that $K_M^{n} = 0$. In particular, we also have $K_M^{n-1} = 0$ and we can assume that \[m_2 = \cdots = m_{n-1} = 0 \ , \] without loss of generality. Thus, we have \[K_M^n = \left[(n-1)m_n\right] = 0\] and there exists $T$ in $\mathfrak{g}$ such that \[d_M(T) \equiv (n-1)m_n \pmod{\mathcal{F}^{n+1}} \ .  \] Considering the coderivation $\tau \coloneqq \frac{t_{n-1}}{n-1}$, we get $[m_1, \tau] = m_n$. One can define an exponential coderivation $E^{\tau}$ by $$e^{\tau} \coloneqq \mathrm{id} + \tau + \frac{\tau^{\circ 2}}{2} + \cdots + \frac{\tau^{\circ k}}{k !}  + \cdots \ .$$ We obtain a $\P_{\infty}$-algebra $(H,N)$ by considering $N = E^{\tau} N E^{-\tau}$. By construction, the element $E^{\tau}$ induces an $\infty$-isomorphism \begin{center}
		\begin{tikzcd}[column sep=normal]
			(H, M) \ar[r,squiggly,"\sim"] 
			& (H, N) \ .
		\end{tikzcd} 
	\end{center}   By construction, we have $n_i = m_i$ for all $i < n$ and $n_n = m_n - [m_2, \tau] = 0$. We refer the reader to \cite[Proposition~2.32]{Kaledin} for a proof over a commutative ring $R$. 
\end{proof}

\begin{theorem}
	Let $(A,M)$ be a $\P_{\infty}$-algebra such that $H(A)$ is a homotopy retract of $A$. If $R$ is a $\mathbb{Q}$-algebra, the algebra $(A,M)$ is gauge formal if and only if the Kaledin class $K_{M^t}$ of a transferred structure $(H(A),M^t)$ is zero. 
\end{theorem}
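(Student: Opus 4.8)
The plan is to prove the two implications separately, using as the main tools Lemma~\ref{invariance par iso} on the invariance of Kaledin classes under $\infty$-isomorphisms, the strictification mechanism from the proof of Proposition~\ref{troncation nulle}, and the completeness of the weight filtration $\mathcal{F}^{\bullet}$ on $\mathrm{Coder}(\C(H(A)))$. A preliminary remark that I would record first is that any $\infty$-quasi-isomorphism between two $\P_{\infty}$-algebra structures carried by $H(A)$ is automatically an $\infty$-isomorphism: its component $f_0$ is a quasi-isomorphism between chain complexes with zero differential, hence an isomorphism. This upgrades the $\infty$-quasi-isomorphisms appearing in the definition of gauge formality to $\infty$-isomorphisms, so that Lemma~\ref{invariance par iso} applies.

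For the direction ``gauge formal $\Rightarrow K_{M^{t}}=0$'', suppose there is an $\infty$-quasi-isomorphism $F\colon (H(A),M^{t})\rightsquigarrow (H(A),m_{*})$. By the remark it is an $\infty$-isomorphism, so Lemma~\ref{invariance par iso} yields an isomorphism of dg Lie algebras $F\colon \mathfrak{g}^{M^{t}}\xrightarrow{\ \sim\ }\mathfrak{g}^{m_{*}}$ with $K_{m_{*}}=[F(\partial_{\hbar}M^{t})]$. Since $m_{*}$ is a strict $\P$-algebra structure, all of its components $m_i$ with $i\geqslant 2$ vanish, hence $\partial_{\hbar}m_{*}=0$ and $K_{m_{*}}=0$. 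As $F$ is an isomorphism on $H_{-1}$, this forces $[\partial_{\hbar}M^{t}]=0$, that is $K_{M^{t}}=0$.

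For the converse, assume $K_{M^{t}}=0$. The idea is to kill the higher operations $m_2^{t},m_3^{t},\dots$ one at a time by $\infty$-isomorphisms and then pass to the limit. Concretely, I would build by induction a compatible tower of $\P_{\infty}$-structures $M^{(n)}$ on $H(A)$, with $M^{(1)}=M^{t}$, together with $\infty$-isomorphisms $G_n\colon (H(A),M^{t})\rightsquigarrow (H(A),M^{(n)})$, such that $m_i^{(n)}=0$ for $2\leqslant i\leqslant n$ and $m_i^{(n)}=m_i^{(n-1)}$ for $i<n$. At stage $n$, Lemma~\ref{invariance par iso} applied to $G_{n-1}$ gives $K_{M^{(n-1)}}=[G_{n-1}(\partial_{\hbar}M^{t})]=0$ (because $\partial_{\hbar}M^{t}$ is a $d_{M^{t}}$-boundary and $G_{n-1}$ is a chain map), so the truncation $K_{M^{(n-1)}}^{\,n}$ vanishes; as the lower components of $M^{(n-1)}$ already vanish, this truncated class is represented by $(n-1)\,m_n^{(n-1)}$. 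I would then run verbatim the exponential-conjugation argument of the proof of Proposition~\ref{troncation nulle}: pick $T\in\mathcal{F}^{1}\mathrm{Coder}(\C(H(A)))$ of degree $0$ with $d_{M^{(n-1)}}(T)\equiv(n-1)\,m_n^{(n-1)}\pmod{\mathcal{F}^{n+1}}$, rescale its weight-$(n-1)$ part to $\tau_n$ (using that $n-1$ and all factorials are invertible in the $\mathbb{Q}$-algebra $R$), and set $M^{(n)}:=E^{\tau_n}M^{(n-1)}E^{-\tau_n}$ and $G_n:=E^{\tau_n}\circ G_{n-1}$.

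Finally, since $\tau_n\in\mathcal{F}^{n-1}$, the automorphism $E^{\tau_n}$ is the identity in weights $<n-1$, so the infinite composite $G_{\infty}=\lim_n G_n$ converges in the complete filtration, has $(g_{\infty})_0=\mathrm{id}$, and for each fixed $i$ the sequence $m_i^{(n)}$ stabilizes, to $0$ when $i\geqslant 2$ and to $m_1^{t}=m_{*}$ when $i=1$. Hence $M^{(\infty)}:=\lim_n M^{(n)}$ is precisely the codifferential of the strict $\P$-algebra $(H(A),m_{*})$, and $G_{\infty}\colon (H(A),M^{t})\rightsquigarrow(H(A),m_{*})$ is an $\infty$-isomorphism, in particular an $\infty$-quasi-isomorphism; thus $(A,M)$ is gauge formal. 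I expect the main obstacle to be precisely this passage to the limit: one has to check that the tower of the $M^{(n)}$ and $G_n$ is genuinely compatible and convergent, that the limiting $\infty$-morphism is still an $\infty$-quasi-isomorphism, and that the limiting structure is the strict $m_{*}$ rather than a nontrivial perturbation of it --- all of which is controlled by completeness of $\mathcal{F}^{\bullet}$ together with the fact that the weight of $\tau_n$ grows with $n$, and this is also exactly where the hypothesis that $R$ is a $\mathbb{Q}$-algebra is used.
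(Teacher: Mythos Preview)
Your proof is correct and follows essentially the same approach as the paper: the easy direction via Lemma~\ref{invariance par iso}, and the hard direction by iterating the exponential-conjugation step from Proposition~\ref{troncation nulle} to kill the higher operations one weight at a time, then passing to the limit using that each $\tau_n$ has weight $n-1$. Your treatment is in fact more careful than the paper's own proof, which is terse on the convergence and omits your useful preliminary remark that $\infty$-quasi-isomorphisms between structures on $H(A)$ are automatically $\infty$-isomorphisms.
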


\begin{proof}
	Let us fix $(H(A),M^t)$ a transferred structure. If $(A,M)$ is gauge formal, there exists an $\infty$-quasi-isomorphism	\begin{center}
		\begin{tikzcd}[column sep=normal]
			F :	(H(A),M^t) \ar[r,squiggly,"\sim"] 
			& (H(A), m_*) \ ,
		\end{tikzcd} 
	\end{center} and $K_{M^t}  = 0$, by Lemma \ref{invariance par iso}. Conversely, suppose that $K_{M^t} = 0$. Let $\iota \geqslant  2$ the smallest integer such that $m_{\iota} \neq 0$. Taking up the demonstration of the previous proposition, there exists a $\P_{\infty}$-algebra structure $(H(A),N)$  where $n_1 = m_1$, $n_2 = \cdots = n_{\iota} = 0$, and an $\infty$-isomorphism \begin{center}
		\begin{tikzcd}[column sep=normal]
			E^{\tau_{\iota}} :	(H(A),M^t) \ar[r,squiggly,"\sim"] 
			& (H(A), N) \ ,
		\end{tikzcd} 
	\end{center} where $\tau_{\iota}$ is the projection of a coderivation of weight $\iota -1$. This procedure can be iterated for any $i \geqslant  \iota$. We obtain a series of $\infty$-isomorphism $E^{\tau_{i}}$. The composition $F = \cdots \circ E^{\tau_{\iota + 1}} \circ E^{\tau_{\iota}} $ is well defined since each $\tau_i$ correspond to a coderivation of weight $i-1$ and leads to the desired $\infty$-isomorphism. 
\end{proof}

As a corollary of this Theorem, we can deduce the following very general result for descent of formality.

\begin{theorem}[Formality descent]\label{descent2}
	Let $S$ be a faithfully flat commutative $R$-algebra. Let $A$ be a chain complex such that $H(A)$ is an $R$-module of finite presentation and a homotopy retract of $A$. Let $(A, M)$ be a $\P_{\infty}$-algebra. Let us denote \[A_{S} \coloneqq A \otimes_{R} S \ .\] 
	\begin{enumerate}
		\item Let $n \geqslant 1 $ be such that $n!$ is a unit in $R$. The algebra $(A, M)$ is gauge $n$-formal if and only if $(A_{S}, M \otimes 1)$ is gauge $n$-formal.
		\item If $R$ is a $\mathbb{Q}$-algebra, the algebra $(A, M)$ is gauge formal if and only if $(A_{S}, M \otimes 1)$ is gauge formal.
	\end{enumerate} 
\end{theorem}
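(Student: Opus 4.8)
The plan is to translate both claims into the vanishing of (truncated) Kaledin classes and to check that these classes descend along $R\to S$. First I would record the base-change preliminaries: since $S$ is $R$-flat we have $H(A_S)=H(A)\otimes_R S$, and tensoring a homotopy retraction $(i,p,h)$ of $A$ onto $H(A)$ with $S$ yields a homotopy retraction of $A_S$ onto $H(A_S)$, so the homotopy transfer theorem applies over $S$. Moreover the transferred structure coming from this base-changed retraction is literally $M^t\otimes_R S$, because the transfer formulas are natural composites of $M$, $i$, $p$ and $h$. Any other transferred structure over $S$ is $\infty$-isomorphic to it, and Kaledin classes together with all their $n$-truncations are invariant under $\infty$-isomorphism (Lemma \ref{invariance par iso}); so we may work with $M^t\otimes 1$ throughout. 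Writing $H=H(A)$, the theorem is then reduced to comparing $K^n_{M^t}\in H_{-1}\!\left(\mathfrak{g}^{M^t}/\mathcal{F}^{n+1}\right)$ with $K^n_{M^t\otimes 1}$, and, for part (2), the full classes $K_{M^t}$ and $K_{M^t\otimes 1}$.

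For part (1): as $n!\in R^{\times}$, Proposition \ref{troncation nulle} gives that $(A,M)$ is gauge $n$-formal if and only if $K^n_{M^t}=0$, and likewise over $S$. Now $\mathfrak{g}^{M^t}/\mathcal{F}^{n+1}=\prod_{1\leqslant k\leqslant n}\Hom_R(\C_k(H),H)$ is a \emph{finite} product; using that $H$ is finitely presented together with the finiteness of $\C$ in each weight, each $\C_k(H)$ is a finitely presented $R$-module, hence $\Hom_R(\C_k(H),H)\otimes_R S\cong\Hom_S\!\left(\C_k(H)\otimes_R S,\,H\otimes_R S\right)$. Therefore $\left(\mathfrak{g}^{M^t}/\mathcal{F}^{n+1}\right)\otimes_R S\cong \mathfrak{g}^{M^t\otimes 1}/\mathcal{F}^{n+1}$, compatibly with the differentials $d_{M^t}$, $d_{M^t\otimes 1}$ and with the prismatic cycles $\partial_{\hbar}M^t$ and $\partial_{\hbar}(M^t\otimes 1)=\partial_{\hbar}M^t\otimes 1$. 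Taking $H_{-1}$, which commutes with $-\otimes_R S$ since $S$ is flat, identifies $K^n_{M^t}\otimes 1$ with $K^n_{M^t\otimes 1}$. As $S$ is faithfully flat, $K^n_{M^t}=0\iff K^n_{M^t}\otimes 1=0\iff K^n_{M^t\otimes 1}=0$, which proves (1).

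For part (2): since $R$ is a $\mathbb{Q}$-algebra every $n!$ is a unit, so by (1) the algebra $(A,M)$ is gauge $n$-formal for all $n$ if and only if $(A_S,M\otimes 1)$ is. It remains to see that over a $\mathbb{Q}$-algebra gauge formality is equivalent to gauge $n$-formality for every $n$. One implication is immediate: a gauge-formality $\infty$-quasi-isomorphism $(H,M^t)\rightsquigarrow (H,m_*)$ exhibits gauge $n$-formality for all $n$, because the codifferential $m_*$ encoding the strict induced structure has vanishing components in weights $\geqslant 2$. For the converse, gauge $n$-formality for all $n$ says that $K^n_{M^t}=0$ for every $n$; one then runs the iterative argument from the proof of the preceding theorem: at the stage treating the first nonzero higher operation $m_{\iota}$, the vanishing of the corresponding truncated class produces an exponential $\infty$-isomorphism $E^{\tau_{\iota}}$ killing $m_{\iota}$ (the truncated classes of the new structure still vanish, by Lemma \ref{invariance par iso}); one then treats $m_{\iota+1}$, and so on, and the composite $\cdots\circ E^{\tau_{\iota+1}}\circ E^{\tau_{\iota}}$ converges because $\tau_i$ has weight $i-1$ and $\mathcal{F}^1\mathrm{Coder}(\C(H))$ is complete for the weight filtration. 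The resulting $\infty$-isomorphism is a gauge formality, so (2) follows from (1).

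The step demanding the most care is the base-change compatibility in (1): one must know that the \emph{truncated} coderivation complex is built from finitely many finitely presented $\Hom$-modules — this is precisely where the finite-presentation hypothesis on $H(A)$, and the finiteness of $\C$ in each weight, are used — while faithful flatness is genuinely needed for the descent direction $K^n_{M^t\otimes 1}=0\Rightarrow K^n_{M^t}=0$. Everything else is a matter of unwinding the definitions.
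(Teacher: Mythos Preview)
Your proposal is correct and follows precisely the strategy the paper has in mind: the paper's own proof is simply a reference to \cite[Theorem~4.1]{Kaledin}, and the argument there is exactly the Kaledin-class descent you outline --- translate gauge $n$-formality into vanishing of $K^n_{M^t}$ via Proposition~\ref{troncation nulle}, identify the truncated deformation complex over $S$ with the base change of the one over $R$, and invoke faithful flatness. Your handling of part~(2) via the convergent composite of exponential $\infty$-isomorphisms is likewise the argument recorded just above Theorem~\ref{descent2}. The one place you rightly flag as delicate is the base-change of $\Hom$: you need each $\C_k$ to be supported in finitely many arities with each $\C_k(n)$ finitely presented over $R$, so that $\C_k(H)$ is finitely presented; this is a standing hypothesis in \cite{Kaledin} and is satisfied in all the examples of the paper, though it is not stated among the assumptions of this section.
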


\begin{proof}
	We refer the reader to \cite[Theorem~4.1]{Kaledin}.
\end{proof}

\begin{example}
Combining this result with Example \ref{complement of subspaces arrangements} or Example \ref{coformality}, we deduce that the formality result for complement of hyperplane arrangements can be descended to the localized ring $\mathbb{Z}_{(\ell)}$, see \cite[Theorem~4.2]{Kaledin} and similarly for the coformality result for configuration spaces.
\end{example}

\bibliographystyle{alpha}
\bibliography{bib}

\end{document}